\date{}
\providecommand{\U}[1]{\protect \rule{.1in}{.1in}}
\newtheorem{theorem}{Theorem}
\newtheorem{definition}[theorem]{Definition}
\newtheorem{example}{Example}
\newtheorem{lemma}[theorem]{Lemma}
\newtheorem{notation}[theorem]{Notation}
\newtheorem{proposition}[theorem]{Proposition}
\newtheorem{remark}[theorem]{Remark}
\newtheorem{fact}[theorem]{Fact}
\begin{document}

\title{Fractional Generalized KYP Lemma for \\Fractional Order System within Finite Frequency Range}
\author{Xiaogang~Zhu,
        and~Junguo~Lu
\thanks{Junguo Lu is with the School of Electronic Information
and Electrical Engineering, Shanghai Jiao Tong University, Shanghai,
200240 China}
\thanks{.}}

%

\maketitle
%

\begin{abstract}%
The celebrated GKYP is widely used in integer-order control system. However, when it comes to the fractional order system, there exists no such tool to solve problems. This paper prove the FGKYP which can be used in the analysis of problems in fractional order system. The $H_\infty$ and $L_\infty$ of fractional order system are
analysed based on the FGKYP.

\end{abstract}%
%

\tableofcontents

\section{INTRODUCTION}
One fundamental research approach in control system is called the frequency-domain method. To the view of frequency-domain, the objective of designing a control system
is to find an appropriate controller which makes the system satisfying some
frequency response qualities. The celebrated Kalman-Yakubovich-Popov (KYP) lemma
bridges between the frequency-domain methods and time-domain methods. The KYP lemma originates from Popov's criterion \cite{Popov1979Absolute}, giving a frequency condition for stability of a feedback system, and then was proved by Kalman \cite{Kalman1963Lyapunov} and Yakubovich \cite{Yakubovich1962Solution} that Popov's
frequency condition was equivalent to existence of a Lyapunov function of certain simple form. It has been regarded as one of the most basic tools in control systems because it only needs to check
one matrix in linear matrix inequality (LMI) instead of checking the entire frequency range in frequency domain inequality (FDI). The KYP lemma \cite{Rantzer1996Kalman}
states that, given matrices $A, B$, and a Hermitian matrix $M$, for $\forall \omega
\in\mathbb{R}\cup\{\infty\}$, the following inequlity
\begin{align*}
\begin{bmatrix}
(\text{j}\omega I-A)^{-1}B\\
I
\end{bmatrix}^*
M
\begin{bmatrix}
(\text{j}\omega I-A)^{-1}B\\
I
\end{bmatrix}
<0
\end{align*}
holds if and only if there exists a Hermitian matrix $P$ such that
\begin{align*}
\begin{bmatrix}
A & B \\
I & 0
\end{bmatrix}^*
\begin{bmatrix}
0 & P\\
P & 0
\end{bmatrix}\begin{bmatrix}
A & B \\
I & 0
\end{bmatrix}+M<0
\end{align*}

However, the KYP lemma has its limitation when it's applied for practical control problems. Generally, practical control problems require systems can satisfy different
performance index for different frequency range. So, the KYP lemma is not compatible with the practical requirement. Iwasaki \cite{Iwasaki2005Generalized} points out that most practical control problems, including digital filter design, sensitivity-shaping, open-loop shaping and structure/control design integration,
only need to be analysed in certain frequency range. It's because many practical signals concentrate the energy in one or some finite frequency range. For example,
most energy of seismic wave is concentrated in frequency range 0.3-8Hz \cite{Chen2010Finite}.

In order to analyse control problems within finite frequency range, classic methods can be divided roughly into three ways\cite{Li2016Overview}: classical control theory, frequency-weighted method
\cite{Zhou1995Frequency,Wang1999new} and analysing control problems within finite frequency range directly. The classical control theory, including PID (Proportion integration differentiation) and root locus,
is mainly focus on the zero-pole point. But it mainly solves problems of linear SISO (Single Input Single Output) systems and is mostly dependent on experience.
The essential approach of frequency-weighted method is this method transforms
the original system, which has the control problem within finite frequency range, into a complex system, which has the control problem within infinite frequency range.
However, this method doesn't solve control problems within finite frequency range directly and depends mostly on experience. The third method, analysing control problems within finite frequency range directly, is now the major method to analyse such problems. It mainly includes Gramian \cite{Zhou1996Robust} and generalized KYP.
Iwasaki developed the
KYP lemma into generalized KYP (GKYP) lemma in 2005 \cite{Iwasaki2005Generalized}.
 The GKYP lemma consider the finite frequency intervals which is flexible for various frequency ranges. With the development of convex optimization, LMI is successfully and widely applied in control system \cite{Zhou1996Robust,Nesterov1994Interior,Yu2002Robust}. GKYP plays a very important
 role in transforming control problems into convex optimization.

 Even though there exist numerous researches utilizing the GKYP lemma, most of them are confined within the integer-order system
\cite{Li2014heuristic,Agulhari2012LMI,Gao2011H,Ding2010Fuzzy,Li2015Frequency,Zhang2015Analysis,Hoang2008Lyapunov}.
Therefore, the main purpose of this paper is to generalize the GKYP. We will prove the fractional generalized KYP (FGKYP) which can be utilized in the fractional order system (FOS).

To the best of our knowledge, there exists no research on the proof of FGKYP, but some papers  base their research on the GKYP. Liang et al. are the first to use GKYP to solve $H_\infty$ of fractional order system\cite{Liang2015Bounded}. But they partly prove that GKYP can be used in the fractional order system and they give a sufficient
condition for $H_\infty$ of FOS with fractional order $(0,1)$. Then, Sabatier et al. improve the condition of linear matrix inequality (LMI), reducing the number of variables\cite{Farges2013H}. In the most recent time, $H_\infty$ output feedback
control problem of linear time-invariant FOS over finite
frequency range is studied by Wang et al., based on the GKYP\cite{Wang2016H}, but they
utilize the GKYP directly.

Similarly to the integer-order system\cite{Wang2016H,Yang2008Generalized,ElGhaoui1997cone,Hara2009Sum,Li2015Min,Li2014Reduced,Li2012Generalized,Long2014Fault,Long2013Fault},
it's significant to go a step further in the research of FGKYP in the reason that FGKYP can be used conveniently to solve many kinds of problems in fractional order systems. In this paper, we'll utilize the FGKYP to solve $H_\infty$ and $L_\infty$ of FOS. Sabatier et al. are the first to compute $H_\infty$ norm of FOS,
and they use different kinds of methods to compute \cite{Sabatier2005Fractional,Fadiga2013H,Moze2008bounded,Fadiga2011computation}. $H_\infty$ of FOS is also used in other different problems, such as design of state feedback controller\cite{Farges2013H}, model match\cite{Padula2013H}
and model reduction\cite{Shen2013Reduced,Shen2014H}.

This paper is organized as follows.

In section II, the FOS model and the problem are stated. In section III, $S$-procedure
is introduced to bridge between matrix inequality and frequency range. In section IV, FGKYP for $L_\infty$ of FOS is proved and $L_\infty$ of FOS with finite frequency is studied. In section V, FGKYP for $H_\infty$ of FOS is proved and $H_\infty$ of FOS with finite frequency is studied. In section VI, numerical examples are given. Finally, in section VII, a conclusion is given.

\begin{notation}
$\nu$ is the order of the fractional order system (FOS), $\bm{\varphi}=\dfrac{\pi}{2}(\nu-1)$.
For a matrix $A$, its transpose, complex conjugate transpose are denoted by $A^{T}$ and $A^{\ast}$, respectively.
For matrices $A$ and $B$, $A\otimes B$ means the Kronecker product.
The conjugate of $x$ is denoted by $\overline{x}$. $\mathbb{R}$ and $\mathbb{C}$ denote real number
and complex number, respectively. $\mathbb{R}^+=\{x:\ x\in\mathbb{R},\ x\geq0\}$.
For $s\in\mathbb{C}$, Re$(s)$ denotes the real part of $s$ and Im$(s)$ denotes the imaginary part of $s$.
The convex hull and the interior of a set $\mathcal{X}$ are denoted
by $co(\mathcal{X})$ and $int(\mathcal{X})$, respectively. $\mathcal{H}_{n}$
stands for the set of $n\times n$ Hermitian matrices. For a matrix
$X\in \mathcal{H}_{n}$, inequalities $X>(\geq)0$ and $X<(\leq)0$ denote positive
(semi)definiteness and negative (semi)definiteness, respectively. The set
$\mathcal{J}$ denotes matrices $J=J^{\ast}\leq0$. $Sym(A)$ stands for
$A+A^{\ast}$. The null space of $X$ is denoted by $X_{\perp}$,i.e.,
$XX_{\perp}=0_{n}$. For $A\in%
\mathbb{C}
^{n\times m}$ and $B\in \mathcal{H}_{n+m}$, a function $\rho:%
\mathbb{C}
^{n\times m}\times \mathcal{H}_{n+m}\rightarrow \mathcal{H}_{m}$ is defined by%
\begin{equation}
\rho(A,B)\triangleq \left[
\begin{array}
[c]{c}%
A\\
I_{m}%
\end{array}
\right]  ^{\ast}B\left[
\begin{array}
[c]{c}%
A\\
I_{m}%
\end{array}
\right]  \label{DefChi}%
\end{equation}

\end{notation}

\section{Preliminaries}

\subsection{Fractional Order System(FOS) Model}

In this paper, the FOS is considered as follows
\begin{equation}
\left \{
\begin{array}
[c]{c}%
D^{\nu}x(t)=Ax(t)+Bu(t)\\
y(t)=Cx(t)+Du(t)
\end{array}
\  \  \right.  \label{EqFOS}%
\end{equation}
where $x(t)\in%
\mathbb{R}
^{n}$ is the pseudo state vector, $u(t)\in%
\mathbb{R}
^{n_{u}}$ is the control vector, $y(t)\in%
\mathbb{R}
^{n_{y}}$ is the sensed output, $\nu$ is the order of the fractional
order system and $0<\nu<2$. $A,B,C,D$ are constant real matrices. $D^{\nu}$ is the fractional
differentiation operator of order $\nu$. If the FOS is relaxed at $t=0$, transfer function matrix between $u(t)$ and
$y(t)$ is
\begin{equation}
G(s)=C(s^{\nu}I-A)^{-1}B+D \label{equTransFOS}
\end{equation}
\subsection{Problem Statement}

Motivated by finite frequency problems of digital filter design,
sensitivity-shaping, et.al, Iwasaki and Hara developed the KYP Lemma into the
GKYP Lemma \cite{Iwasaki2005Generalized}. The KYP lemma can check infinite frequency domain
inequality (FDI) via linear matrix inequality (LMI), whereas the GKYP Lemma can
check finite FDI via LMI. Given matrices $A,B,$ and a Hermitian matrix $\Pi$
and $\forall \omega \in%
\mathbb{R}
\cup \{ \infty \}$, the infinite FDI is described as%
\begin{equation}
\left[
\begin{array}
[c]{c}%
(\text{j}\omega I-A)^{-1}B\\
I
\end{array}
\right]  ^{\ast}\Pi \left[
\begin{array}
[c]{c}%
(\text{j}\omega I-A)^{-1}B\\
I
\end{array}
\right]  <0
\end{equation}

When it comes to FOS, it also exits problems which should be solved in
infinite frequency domain. Given matrices $A,B$ and a Hermitian matrix $\Pi$
and $\forall \omega \in%
\mathbb{R}
\cup \{ \infty \},$ the infinite FDI of FOS is described as%
\begin{equation}
\left[
\begin{array}
[c]{c}%
((\text{j}\omega)^{\nu}I-A)^{-1}B\\
I
\end{array}
\right]  ^{\ast}\Pi \left[
\begin{array}
[c]{c}%
((\text{j}\omega)^{\nu}I-A)^{-1}B\\
I
\end{array}
\right]  <0 \label{EqLDI}%
\end{equation}

Where $\nu$ is the fractional system order of the system.

As for FOS, we also want to check the finite FDI via LMI.

\section{$S$-procedure and Frequency Range}

$S$-procedure is stated as the following. Given $\Pi,F\in \mathcal{H}_{q}$, we
get the equivalence
\[
\eta^{\ast}\Pi \eta \leq0\  \  \forall \eta \in%
\mathbb{C}
^{q}\ such\ that\  \eta^{\ast}F\eta \geq0
\]
\[
\Leftrightarrow \exists \delta \in%
\mathbb{R}
\ such\ that\  \delta \geq0,\Pi+\delta F\leq0
\]

Where the regularity, $F\nleq0,$ is assumed. The strict inequality version
\[
\eta^{\ast}\Pi \eta<0\  \forall \eta \in%
\mathbb{C}
^{q}\ such\ that\  \eta^{\ast}F\eta \geq0
\]
\[
\Leftrightarrow \exists \delta \in%
\mathbb{R}
\ such\ that\  \delta \geq0,\Pi+\delta F<0
\]

The purpose of the $S$-procedure is to replace the former condition by the
latter condition because the latter condition is easier to verify. As for
FDI, the $S$-procedure bridge between the matrix inequality and frequency range.

In order to generalize the above $S$-procedure, paper\cite{Iwasaki2005Generalized} rewrites them with
different notation%
\begin{equation}
tr(\Pi \mathcal{G}_{1})\leq0\Leftrightarrow(\Pi+\mathcal{F})\cap \mathcal{J\neq
\emptyset} \label{EqSproc}%
\end{equation}
\begin{equation}
tr(\Pi \mathcal{G}_{1})<0\Leftrightarrow(\Pi+\mathcal{F})\cap int(\mathcal{J)\neq
\emptyset} \label{EqStrictSP}%
\end{equation}
where
\begin{equation}%
\begin{array}
[c]{c}%
\mathcal{F}\triangleq \left \{  \delta F:\delta \in%
\mathbb{R}
,\ \delta \geq0,\ F\in \mathcal{H}_{q}\right \} \\
\mathcal{G(F)}\triangleq \left \{  G\in \mathcal{H}_{q}:G\neq0,\ G\geq0,\ tr(\mathcal{F}G)\geq0\right \} \\
\mathcal{G}_{1}\mathcal{(F)}\triangleq \left \{  G\in \mathcal{G(F)}%
:rank(G)=1\right \}
\end{array}
\label{DefFG}%
\end{equation}

Paper\cite{Iwasaki2005Generalized} has already proved the lossless conditon for $S$-procedure as following.

First, the meaning of \textit{admissible, regular} and \textit{rank-one separable} is
given as follows.
\begin{definition}
A set $\mathcal{F}\subset \mathcal{H}_{q}$ is said to be
\begin{enumerate}
  \item \textit{admissible} if it is a nonempty closed convex cone and
$int(\mathcal{J})\cap \mathcal{F}=\emptyset$;
  \item \textit{regular }if $\mathcal{J}\cap \mathcal{F}=\left \{  0\right \}  $;
  \item \textit{rank-one separable} if $\mathcal{G}=co(\mathcal{G}_{1})$.
\end{enumerate}
\end{definition}

\begin{lemma}
[$S$-procedure]\label{LemSProc}
\cite{Iwasaki2005Generalized}
Let an admissible set $\mathcal{F}%
\subset \mathcal{H}_{q}$ be given and define $\mathcal{G}_{1}$ by (\ref{DefFG}).
Then, the strict $S$-procedure is lossless, i.e. (\ref{EqStrictSP}) holds for an arbitrary $\Pi\in\mathcal{H}_{q}$, if
and only if $\mathcal{F}$ is rank-one separable. Moreover, assuming that $\mathcal{F}$ is regular, then the nonstrict $S$-procedure
is lossless, i.e. (\ref{EqSproc}) holds for an arbitrary $\Pi\in\mathcal{H}_{q}$, if and only if $\mathcal{F}$ is
rank-one separable.

\end{lemma}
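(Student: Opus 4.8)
The plan is to recast both equivalences as statements of conic duality in $\mathcal{H}_q$ under the trace inner product $\langle X,Y\rangle=tr(XY)$, and to localize the single place where rank-one separability is really needed. Write $\mathcal{P}=\{G\in\mathcal{H}_q:G\geq0\}$ for the positive semidefinite cone, so that $\mathcal{J}=-\mathcal{P}$ and $int(\mathcal{J})=\{G:G<0\}$, and let $\mathcal{F}^{+}=\{G\in\mathcal{H}_q:tr(FG)\geq0\ \forall F\in\mathcal{F}\}$ be the dual cone of $\mathcal{F}$; since $\mathcal{P}$ is self-dual, (\ref{DefFG}) gives $\mathcal{G}\cup\{0\}=\mathcal{F}^{+}\cap\mathcal{P}$. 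First I would dispose of the implication that holds with no hypothesis at all: if $(\Pi+\mathcal{F})\cap int(\mathcal{J})\neq\emptyset$, choose $F\in\mathcal{F}$ with $\Pi+F<0$, and then for every $G\in\mathcal{G}$ (in particular every $G\in\mathcal{G}_1$) the relations $G\geq0$, $G\neq0$ and $tr(FG)\geq0$ give $tr(\Pi G)=tr((\Pi+F)G)-tr(FG)<0$. The nonstrict case is identical with strict inequalities relaxed. Thus the right-to-left directions of (\ref{EqStrictSP}) and (\ref{EqSproc}) are free, and all the work is in the converse together with the passage from $\mathcal{G}_1$ to $\mathcal{G}$.

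The core is a duality identity. Since $\Pi+F\leq0$ is the same as $-\Pi\in\mathcal{F}+\mathcal{P}$, we have $(\Pi+\mathcal{F})\cap\mathcal{J}\neq\emptyset\Leftrightarrow-\Pi\in\mathcal{F}+\mathcal{P}$, and likewise $(\Pi+\mathcal{F})\cap int(\mathcal{J})\neq\emptyset\Leftrightarrow-\Pi\in\mathcal{F}+int(\mathcal{P})$. The dual of the cone $\mathcal{F}+\mathcal{P}$ is $\mathcal{F}^{+}\cap\mathcal{P}=\mathcal{G}\cup\{0\}$, so the bipolar theorem identifies the closure of $\mathcal{F}+\mathcal{P}$ with the set of $X$ satisfying $tr(XG)\geq0$ for all $G\in\mathcal{G}$. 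In the nonstrict case I would use the regularity hypothesis precisely to remove the closure: $\mathcal{F}$ and $\mathcal{P}$ are closed convex cones with $\mathcal{F}\cap(-\mathcal{P})=\mathcal{F}\cap\mathcal{J}=\{0\}$, and in finite dimensions this forces $\mathcal{F}+\mathcal{P}$ to be closed (a normalization argument: an unbounded sequence $F_n+S_n$ with bounded limit would, after rescaling, produce a nonzero element of $\mathcal{F}\cap(-\mathcal{P})$). Hence $(\Pi+\mathcal{F})\cap\mathcal{J}\neq\emptyset\Leftrightarrow tr(\Pi G)\leq0$ for all $G\in\mathcal{G}$. In the strict case no regularity is needed: compactness of the base $\{G\in\mathcal{G}:tr(G)=1\}$ upgrades $tr(\Pi G)<0$ to a uniform bound $tr((\Pi+\epsilon I)G)\leq0$ on $\mathcal{G}$ for some $\epsilon>0$, whence $-(\Pi+\epsilon I)$ lies in the closure of $\mathcal{F}+\mathcal{P}$, and absorbing the strictly positive slack $\epsilon I$ into $int(\mathcal{P})$ returns genuine membership $-\Pi\in\mathcal{F}+int(\mathcal{P})$. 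This gives $(\Pi+\mathcal{F})\cap int(\mathcal{J})\neq\emptyset\Leftrightarrow tr(\Pi G)<0$ for all $G\in\mathcal{G}$.

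With the duality identities in hand, only the replacement of $\mathcal{G}$ by $\mathcal{G}_1$ remains, and this is the one step that sees rank-one separability. If $\mathcal{G}=co(\mathcal{G}_1)$, every $G\in\mathcal{G}$ is a convex combination of rank-one members, and since $G\mapsto tr(\Pi G)$ is linear, the bound $tr(\Pi G)\leq0$ (resp. $<0$) on $\mathcal{G}_1$ propagates to all of $\mathcal{G}$; combined with the identities of the previous step this yields (\ref{EqSproc}) and (\ref{EqStrictSP}) for every $\Pi$, proving sufficiency. For necessity I would argue contrapositively: if $\mathcal{F}$ is not rank-one separable there is $G_0\in\mathcal{G}\setminus co(\mathcal{G}_1)$, and separating $G_0$ from the convex set $co(\mathcal{G}_1)$ produces a Hermitian $\Pi$ with $tr(\Pi G_0)>0$ while $tr(\Pi G)\leq0$ for all $G\in\mathcal{G}_1$. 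Such a $\Pi$ satisfies the $\mathcal{G}_1$-side of the $S$-procedure but fails the dual condition $tr(\Pi G)\leq0$ on all of $\mathcal{G}$, so by the duality identity $(\Pi+\mathcal{F})\cap\mathcal{J}=\emptyset$ (resp.\ its strict analogue), exhibiting a genuine loss and completing the equivalence in both the strict and the regular nonstrict settings.

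I expect the main obstacle to be getting the topology exactly right in the duality step rather than the algebra around it. The bipolar theorem only recovers the closure of $\mathcal{F}+\mathcal{P}$, so the nonstrict equivalence can genuinely fail unless regularity is invoked to prove closedness, and the strict equivalence survives only because strict feasibility supplies the compactness and the positive-definite slack that let one descend from the closure back to the cone itself. Care is also needed because $co(\mathcal{G}_1)$ need not be closed, so the separating-hyperplane step in the necessity argument must be run against $co(\mathcal{G}_1)$ directly, using that $G_0$ is a point genuinely outside it. Once these closure and compactness points are handled, the rank-one trade and the separation are routine.
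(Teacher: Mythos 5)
First, a framing remark: the paper does not actually prove this lemma --- it is quoted, with citation, from Iwasaki and Hara \cite{Iwasaki2005Generalized} --- so your proposal can only be measured against the standard proof in that reference, which is indeed the conic-duality argument you outline. Most of your work is correct and matches that route: the identification $\mathcal{G}\cup\{0\}=\mathcal{F}^{+}\cap\mathcal{P}$, the two reformulations $(\Pi+\mathcal{F})\cap\mathcal{J}\neq\emptyset\Leftrightarrow-\Pi\in\mathcal{F}+\mathcal{P}$ and $(\Pi+\mathcal{F})\cap int(\mathcal{J})\neq\emptyset\Leftrightarrow-\Pi\in\mathcal{F}+int(\mathcal{P})$, the closedness of $\mathcal{F}+\mathcal{P}$ under the regularity hypothesis, the compact-base-plus-slack argument that removes the closure in the strict case, and the use of $\mathcal{G}=co(\mathcal{G}_{1})$ to propagate the trace bound from $\mathcal{G}_{1}$ to $\mathcal{G}$ in the sufficiency direction are all sound.

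The genuine gap is in the necessity direction, at exactly the point you flag in your last paragraph and then resolve incorrectly. Strict separation --- a Hermitian $\Pi$ with $tr(\Pi G_{0})>0$ and $tr(\Pi G)\leq0$ for all $G\in co(\mathcal{G}_{1})$ --- is \emph{not} available for a point merely outside a convex set: if $G_{0}$ lay in the closure of $co(\mathcal{G}_{1})$ without belonging to it, then every functional that is $\leq0$ on $co(\mathcal{G}_{1})$ would be $\leq0$ at $G_{0}$ by continuity, so ``running the separation against $co(\mathcal{G}_{1})$ directly, using that $G_{0}$ is genuinely outside it'' cannot produce the strict inequality your argument requires. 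What saves the argument is that $co(\mathcal{G}_{1})\cup\{0\}$ is in fact closed, and this must be proved: $\mathcal{G}_{1}\cup\{0\}$ is a closed cone whose base $\{G\in\mathcal{G}_{1}:tr(G)=1\}$ is compact; the trace-one slice of $co(\mathcal{G}_{1})$ equals the convex hull of that base (renormalize each term of a convex combination by its trace), hence is compact by Carath\'{e}odory; and a cone with a compact base not containing the origin is closed. This is the one nontrivial topological fact in the necessity direction, and your proposal omits it while asserting a substitute that is false in general. A second, smaller repair: for the strict version of the lemma the premise you must exhibit is $tr(\Pi G)<0$ on $\mathcal{G}_{1}$, whereas separation only yields $\leq0$; replace $\Pi$ by $\Pi-\epsilon I$ with $\epsilon>0$ small, which gives $tr((\Pi-\epsilon I)G)\leq-\epsilon\, tr(G)<0$ on $\mathcal{G}_{1}$ while preserving $tr((\Pi-\epsilon I)G_{0})>0$. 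With these two repairs, your ``free'' direction shows $(\Pi+\mathcal{F})\cap int(\mathcal{J})=\emptyset$ (respectively $(\Pi+\mathcal{F})\cap\mathcal{J}=\emptyset$), losslessness fails, and the necessity argument closes, bringing your proof in line with the cited one.
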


\begin{remark}
This lemma shows that when we choose an appropriate $\mathcal{F}$, which is
rank-one separable, the $S$-procedure will be lossless regardless of the
choice of $\Pi.$
\end{remark}

Paper\cite{Iwasaki2005Generalized} also gives some examples of admissible, regular and rank-one separable sets, which are readily proved.%
\begin{equation}
\mathcal{F}_{X}\triangleq \left \{  \left[
\begin{array}
[c]{cc}%
0 & X\\
X & 0
\end{array}
\right]  :X\in \mathcal{H}_{n}\right \}  \label{DefFx}%
\end{equation}

\begin{equation}
\mathcal{F}_{XY}\triangleq \left \{  \left[
\begin{array}
[c]{cc}%
-Y & X\\
X & Y
\end{array}
\right]  :X,Y\in \mathcal{H}_{n},Y\geq0\right \}  \label{DefFxy}%
\end{equation}

\begin{lemma}\label{lemRankone}
\cite{Iwasaki2005Generalized}
Let $\mathcal{F}\subset\mathcal{H}_m$ be a rank-one separable set. Then the set
$N^*\mathcal{F}N+\mathcal{P}$ is rank-one separable for any matrix $N\in\mathbb{C}^{m\times n}$
and subset $\mathcal{P}\subset\mathcal{H}_n$ of positive-semidefinite matrices containing the origin.
\end{lemma}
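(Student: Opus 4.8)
The plan is to prove the lemma directly from the definition of rank-one separability, namely the two inclusions making up $\mathcal{G}(\tilde{\mathcal{F}})=co(\mathcal{G}_1(\tilde{\mathcal{F}}))$, where $\tilde{\mathcal{F}}:=N^{*}\mathcal{F}N+\mathcal{P}$. First I would obtain a workable description of the dual set $\mathcal{G}(\tilde{\mathcal{F}})$. For $G\in\mathcal{H}_n$ with $G\geq0$, $G\neq0$, membership requires $\mathrm{tr}\big((N^{*}FN+P)G\big)\geq0$ for all $F\in\mathcal{F}$ and all $P\in\mathcal{P}$. Writing $\mathrm{tr}(N^{*}FNG)=\mathrm{tr}\big(F(NGN^{*})\big)$ and using that $0\in\mathcal{P}$, the $F$-part forces $\mathrm{tr}\big(F(NGN^{*})\big)\geq0$ for every $F\in\mathcal{F}$; conversely, since every $P\in\mathcal{P}$ is positive semidefinite and $G\geq0$, the term $\mathrm{tr}(PG)$ is automatically nonnegative and imposes no further restriction. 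As $NGN^{*}\geq0$, this yields the clean characterization
\begin{equation*}
\mathcal{G}(\tilde{\mathcal{F}})=\{\,G\in\mathcal{H}_n:G\geq0,\ G\neq0,\ NGN^{*}\in\mathcal{G}(\mathcal{F})\cup\{0\}\,\},
\end{equation*}
and, restricting to rank one, $xx^{*}\in\mathcal{G}_1(\tilde{\mathcal{F}})$ exactly when $x\neq0$ and either $Nx=0$ or $(Nx)(Nx)^{*}\in\mathcal{G}_1(\mathcal{F})$.

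The inclusion $co(\mathcal{G}_1(\tilde{\mathcal{F}}))\subseteq\mathcal{G}(\tilde{\mathcal{F}})$ is routine: the set $\mathcal{G}(\tilde{\mathcal{F}})\cup\{0\}$ is the intersection of the positive-semidefinite cone with the preimage under the linear map $G\mapsto NGN^{*}$ of the convex cone $\mathcal{G}(\mathcal{F})\cup\{0\}$, hence is itself a convex cone containing every element of $\mathcal{G}_1(\tilde{\mathcal{F}})$, and a convex combination of nonzero positive-semidefinite matrices is again nonzero. For the reverse inclusion I would fix $G\in\mathcal{G}(\tilde{\mathcal{F}})$ and factor $G=VV^{*}$ with $V\in\mathbb{C}^{n\times r}$ of full column rank $r=\mathrm{rank}(G)$. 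Setting $W:=NV$ gives $NGN^{*}=WW^{*}$. The key observation is that all rank-one decompositions of $G$ are parametrised by resolutions of the identity: $G=\sum_j(Vu_j)(Vu_j)^{*}$ holds precisely when $\sum_j u_j u_j^{*}=I_r$, and then the relevant images are $N(Vu_j)=Wu_j$. Thus it suffices to produce vectors $u_j\in\mathbb{C}^{r}$ with $\sum_j u_j u_j^{*}=I_r$ such that each $Wu_j$ is either $0$ or a rank-one generator direction of $\mathcal{G}_1(\mathcal{F})$.

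To build these vectors I would invoke the hypothesis that $\mathcal{F}$ is rank-one separable. If $NGN^{*}=0$ then $\mathrm{range}(G)\subseteq\ker N$, and any spectral decomposition of $G$ already lands in $\mathcal{G}_1(\tilde{\mathcal{F}})$. Otherwise $WW^{*}=NGN^{*}\in\mathcal{G}(\mathcal{F})$, so separability gives $WW^{*}=\sum_k h_k h_k^{*}$ with each $h_k h_k^{*}\in\mathcal{G}_1(\mathcal{F})$. Since $h_k\in\mathrm{range}(WW^{*})=\mathrm{range}(W)$, I can write $h_k=Wt_k$, and comparing $WW^{*}=W\big(\sum_k t_k t_k^{*}\big)W^{*}$ shows that the projections $u_k:=Qt_k$ onto $\mathrm{range}(W^{*})=(\ker W)^{\perp}$ satisfy the tight-frame identity $\sum_k u_k u_k^{*}=Q$ together with $Wu_k=Wt_k=h_k$. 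Completing $\{u_k\}$ by an orthonormal basis of $\ker W\subseteq\mathbb{C}^{r}$ restores $\sum_j u_j u_j^{*}=I_r$, the appended vectors contributing kernel directions with $Wu_j=0$ and being nonzero because $V$ has full column rank. Pulling everything back through $V$ exhibits $G$ as a finite sum of elements of $\mathcal{G}_1(\tilde{\mathcal{F}})$, and since $\mathcal{G}_1(\tilde{\mathcal{F}})$ is scale-invariant its convex hull is a convex cone closed under addition, so $G\in co(\mathcal{G}_1(\tilde{\mathcal{F}}))$. The main obstacle is exactly this lifting step: a rank-one decomposition of $NGN^{*}$ lives in $\mathbb{C}^{m}$ and must be transported back to a genuine rank-one decomposition of $G$ in $\mathbb{C}^{n}$ compatible with $\ker N$; the resolution-of-identity viewpoint, together with the identity $Q\big(\sum_k t_k t_k^{*}\big)Q=Q$, is what makes this transport work.
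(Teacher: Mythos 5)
Your proposal is correct, but there is nothing in the paper to compare it against: the paper states this lemma with a citation to \cite{Iwasaki2005Generalized} and gives no proof of its own, using the result only as an imported tool (in Lemma \ref{lemRkAdm} and Theorems \ref{thm01Spro} and \ref{thm12Spro}). Judged on its own terms, your argument is sound and complete in all essentials. The dual characterization $\mathcal{G}(N^{*}\mathcal{F}N+\mathcal{P})=\{G\in\mathcal{H}_n:\ G\geq 0,\ G\neq 0,\ NGN^{*}\in\mathcal{G}(\mathcal{F})\cup\{0\}\}$ is exactly right (it uses both hypotheses on $\mathcal{P}$: the origin gives the forward implication, positive semidefiniteness gives the converse), and it correctly isolates the two ways a rank-one element $xx^{*}$ can arise, namely $Nx=0$ or $(Nx)(Nx)^{*}\in\mathcal{G}_{1}(\mathcal{F})$. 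The crux is the lifting step, and your resolution-of-identity mechanism does work: with $G=VV^{*}$, $V$ of full column rank, $W=NV$, $T=\sum_{k}t_{k}t_{k}^{*}$ and $Q$ the orthogonal projector onto $(\ker W)^{\perp}=\operatorname{range}(W^{*})$, the identity $WW^{*}=WTW^{*}$ combined with $W=WQ$ gives $W\left(QTQ-Q\right)W^{*}=0$; since $QTQ-Q$ is Hermitian, supported on $\operatorname{range}(Q)$, and its sesquilinear form vanishes on $\operatorname{range}(W^{*})$, it must be zero, so $QTQ=Q$. The vectors $u_{k}=Qt_{k}$ completed by an orthonormal basis of $\ker W$ then resolve $I_{r}$, and pushing forward through the injective $V$ produces a rank-one decomposition of $G$ whose terms all lie in $\mathcal{G}_{1}(N^{*}\mathcal{F}N+\mathcal{P})$ (the kernel directions land there because $Vw\neq 0$ while $NVw=0$). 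This is, in spirit, the duality-plus-lifting argument of the cited source; your version is more explicit about the one point a careless proof would gloss over, namely why a rank-one decomposition of $NGN^{*}$ in $\mathbb{C}^{m}$ can be transported back to a rank-one decomposition of $G$ in $\mathbb{C}^{n}$ compatible with $\ker N$.
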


In general, a frequency range can be  visualized as a curve (or curves) on the
complex plane. Paper\cite{Iwasaki2005Generalized} define a curve as the following.

\begin{definition}
A curve on the complex plane is a collection of infinitely many points
$\theta(t)\in%
\mathbb{C}
$ continuously parametrized by $t$ for $t_{0}\leq t\leq t_{f}$ where $t_{0}%
$,$t_{f}\in%
\mathbb{R}
\cup \left \{  \pm \infty \right \}  $ and $t_{0}<t_{f}$. A set of complex numbers
$\mathbf{\Theta}\subseteq%
\mathbb{C}
$\ is said to represent a curve (or curves) if it is a union of a finite
number of curve(s). With $\Delta,\Sigma \in \mathcal{H}_{2}$ being given
matrices, $\mathbf{\Theta}$ is defined as:%
\begin{equation}
\mathbf{\Theta}(\Delta,\Sigma)\triangleq \left \{  \theta\in%
\mathbb{C}
|\ \rho(\theta,\Delta)=0,\ \rho(\theta,\Sigma)\geq0\right \}
\label{DefCurve}%
\end{equation}

\end{definition}

\begin{remark}
Note that the set $\mathbf{\Theta}(\Delta,\Sigma)$ is the intersection of
$\mathbf{\Theta}(\Delta,0)$ and $\mathbf{\Theta}(0,\Sigma)$. It can readily be
verified that the set $\mathbf{\Theta}(\Delta,0)$ represents a curve if and
only if $\det(\Delta)<0.$
\end{remark}

\begin{lemma}
\cite{Iwasaki2005Generalized}
Consider the set $\mathbf{\Theta}(\Delta,\Sigma)$ in (\ref{DefCurve}) and suppose it
represents curves on the complex plane. Then the
set $\mathbf{\Theta}(\Delta,\Sigma)$ is unbounded if and only if $\Delta_{11}=0$ and $\Sigma_{11}\geq0$.
\end{lemma}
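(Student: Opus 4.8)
The plan is to reduce everything to a concrete description of $\mathbf{\Theta}(\Delta,\Sigma)$ as a subset of the complex plane, and then read off boundedness directly from the leading coefficients $\Delta_{11}$ and $\Sigma_{11}$. First I would expand the two scalar forms explicitly. Writing $\theta\in\mathbb{C}$ and using that $\Delta,\Sigma\in\mathcal{H}_2$, the definition of $\rho$ in (\ref{DefChi}) gives $\rho(\theta,\Delta)=\Delta_{11}|\theta|^2+2\,\mathrm{Re}(\Delta_{21}\theta)+\Delta_{22}$ and likewise $\rho(\theta,\Sigma)=\Sigma_{11}|\theta|^2+2\,\mathrm{Re}(\Sigma_{21}\theta)+\Sigma_{22}$, both real-valued. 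Thus $\mathbf{\Theta}(\Delta,\Sigma)$ is exactly the set of points on the locus $\{\theta:\rho(\theta,\Delta)=0\}$ that in addition satisfy $\rho(\theta,\Sigma)\geq0$.

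The first key step is to classify the containing set $\mathbf{\Theta}(\Delta,0)=\{\theta:\rho(\theta,\Delta)=0\}$. If $\Delta_{11}\neq0$, completing the square turns $\rho(\theta,\Delta)=0$ into the equation of a circle whose radius squared equals $-\det(\Delta)/\Delta_{11}^2$; since the hypothesis that $\mathbf{\Theta}(\Delta,\Sigma)$ represents curves forces $\det(\Delta)<0$ (by the preceding remark, as $\mathbf{\Theta}(\Delta,0)$ must contain a curve), this circle is a genuine and hence bounded curve, so any subset of it is bounded. If instead $\Delta_{11}=0$, then $\det(\Delta)<0$ gives $\Delta_{21}\neq0$, and $\rho(\theta,\Delta)=0$ reduces to the affine equation $2\,\mathrm{Re}(\Delta_{21}\theta)+\Delta_{22}=0$, i.e. an unbounded straight line.

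With this dichotomy the reverse implication is immediate: if $\Delta_{11}\neq0$ the set lies on a circle and is bounded, so unboundedness forces $\Delta_{11}=0$. For the role of $\Sigma_{11}$, I would parametrize the line as $\theta(t)=\theta_0+td$ with $t\in\mathbb{R}$ and $d$ a unit tangent direction, then substitute into $\rho(\theta,\Sigma)$ to obtain a real quadratic $q(t)=\Sigma_{11}t^2+\beta t+\gamma$ whose leading coefficient is exactly $\Sigma_{11}$. The set $\mathbf{\Theta}(\Delta,\Sigma)$ then corresponds to $\{t:q(t)\geq0\}$: when $\Sigma_{11}<0$ this is a bounded interval (giving boundedness, hence unboundedness forces $\Sigma_{11}\geq0$), and when $\Sigma_{11}>0$ it is the complement of a bounded interval and therefore unbounded. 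This settles both directions away from the boundary case.

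The step I expect to be the main obstacle is the degenerate boundary case $\Delta_{11}=0,\ \Sigma_{11}=0$, where $q(t)=\beta t+\gamma$ collapses to an affine (or constant) function and the leading-sign argument no longer applies. Here I would invoke the standing hypothesis that $\mathbf{\Theta}(\Delta,\Sigma)$ represents curves, i.e. is nonempty and infinite: this excludes the possibility $\beta=0,\ \gamma<0$ (an empty set), so $\{t:q(t)\geq0\}$ is either a half-line (when $\beta\neq0$) or the whole line (when $\beta=0,\ \gamma\geq0$), unbounded in either case. Assembling the circle case, the strict-sign quadratic cases, and this degenerate case yields the stated equivalence: $\mathbf{\Theta}(\Delta,\Sigma)$ is unbounded if and only if $\Delta_{11}=0$ and $\Sigma_{11}\geq0$.
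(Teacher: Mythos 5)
Your proof is correct, but there is nothing in the paper to compare it against: this lemma is stated with a citation to \cite{Iwasaki2005Generalized} and the paper gives no proof of it, so your argument fills in what the paper leaves as an import. Your route is elementary and self-contained: expand $\rho(\theta,\Delta)$ and $\rho(\theta,\Sigma)$ via (\ref{DefChi}) as real-valued quadratics in $\theta$, complete the square to classify the locus $\{\theta:\rho(\theta,\Delta)=0\}$ as a circle when $\Delta_{11}\neq0$ (hence bounded --- note this direction needs no appeal to $\det(\Delta)<0$ at all, since a point or the empty set is also bounded) versus a line when $\Delta_{11}=0$, $\Delta_{21}\neq0$, and then read off boundedness of $\{t:q(t)\geq0\}$ from the sign of the leading coefficient $\Sigma_{11}$ of the quadratic restricted to that line, invoking the ``represents curves'' hypothesis exactly where it is needed: to force $\det(\Delta)<0$ in the line case and to exclude the empty set in the degenerate subcase $\Sigma_{11}=0$, $\beta=0$, $\gamma<0$. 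The machinery the paper itself develops suggests a different, heavier route: use Lemma \ref{LemTrans} to write $\Delta=T^{\ast}\Delta_{0}T$, $\Sigma=T^{\ast}\Sigma_{0}T$, pass through the M\"obius-transformation lemma with $E(s)$ to identify $\mathbf{\Theta}(\Delta,\Sigma)$ with the image of $\mathbf{\Theta}(\Delta_{0},\Sigma_{0})=\{\mathrm{j}^{\nu}W:\alpha W^{2}+\gamma\geq0\}$, and detect unboundedness from whether the pole of $E$ is approached (this is essentially how the paper's subsequent discussion extracts the condition $\alpha\geq0$, i.e.\ $\Sigma_{11}\geq0$ after congruence). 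Your coordinate computation avoids both the normal form and the M\"obius bookkeeping, at the price of being special-purpose rather than reusable. One presentational caveat, not a gap: your claim that the hypothesis forces $\det(\Delta)<0$ cites the paper's remark, which concerns $\mathbf{\Theta}(\Delta,0)$ rather than $\mathbf{\Theta}(\Delta,\Sigma)$; to be airtight, observe that $\mathbf{\Theta}(\Delta,\Sigma)\subseteq\mathbf{\Theta}(\Delta,0)$ and that if $\Delta_{11}=0$ and $\Delta_{21}=0$ the latter set is empty or all of $\mathbb{C}$, and in either event $\mathbf{\Theta}(\Delta,\Sigma)$ (then empty, a point, or a two-dimensional region) cannot represent curves --- a one-line check.
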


\begin{lemma}
\label{LemTrans} Let $\Delta,\Sigma \in \mathcal{H}_{2}$ be given. Suppose
$\det(\Delta)<0$, then, there exits a common congruence
transformation such that%
\begin{equation}
\Delta=T^{\ast}\Delta_{0}T\  \  \  \Sigma=T^{\ast}\Sigma_{0}T
\label{equDelSig}
\end{equation}
\begin{equation}
\Delta_{0}\triangleq \left[
\begin{array}
[c]{cc}%
0 & e^{\text{j}\bm{\varphi}}\\
e^{-\text{j}\bm{\varphi}} & 0
\end{array}
\right]  \  \  \Sigma_{0}\triangleq \left[
\begin{array}
[c]{cc}%
\alpha & \beta e^{\text{j}\bm{\varphi}}\\
\beta e^{-\text{j}\bm{\varphi}} & \gamma
\end{array}
\right]  \label{DefDelt0}%
\end{equation}
where $\alpha,\beta,\gamma \in%
\mathbb{R}
$ and $T\in%
\mathbb{C}
^{2\times2}$. In particular, $\alpha$ and $\gamma$ can be ordered to satisfy
$\alpha \leq \gamma$.
\end{lemma}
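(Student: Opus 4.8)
The plan is to first strip off the fractional phase, reducing the statement to the integer-order normal form (the case $\bm{\varphi}=0$), and then to exhibit the residual congruence freedom as an explicit one-parameter group that rotates the imaginary part of the off-diagonal entry to zero. The only hypothesis, $\det(\Delta)<0$, will enter through Sylvester's law of inertia.

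First I would introduce the unitary diagonal matrix $P=\mathrm{diag}(e^{-\text{j}\bm{\varphi}/2},e^{\text{j}\bm{\varphi}/2})$ and observe, by direct computation, that the target matrices $\Delta_0,\Sigma_0$ of (\ref{DefDelt0}) factor as $\Delta_0=P^{\ast}\hat{\Delta}_0 P$ and $\Sigma_0=P^{\ast}\hat{\Sigma}_0 P$, where $\hat{\Delta}_0=\left[\begin{smallmatrix}0&1\\1&0\end{smallmatrix}\right]$ and $\hat{\Sigma}_0=\left[\begin{smallmatrix}\alpha&\beta\\\beta&\gamma\end{smallmatrix}\right]$ is a \emph{real symmetric} matrix (these are exactly $\Delta_0,\Sigma_0$ with $\bm{\varphi}=0$). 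Since $PP^{\ast}=I$, if one can produce an invertible $\hat{T}$ and reals $\alpha,\beta,\gamma$ with $\Delta=\hat{T}^{\ast}\hat{\Delta}_0\hat{T}$ and $\Sigma=\hat{T}^{\ast}\hat{\Sigma}_0\hat{T}$, then $T=P^{\ast}\hat{T}$ solves the original problem. Thus it suffices to treat $\bm{\varphi}=0$: simultaneously congruence $\Delta$ to the exchange matrix $\hat{\Delta}_0$ and $\Sigma$ to some real symmetric matrix.

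Next, because $\Delta$ is Hermitian with $\det(\Delta)<0$, its two real eigenvalues have opposite signs, so $\Delta$ has inertia $(1,1)$, exactly that of $\hat{\Delta}_0$; by Sylvester's law of inertia there is an invertible $T_1$ with $T_1^{\ast}\hat{\Delta}_0 T_1=\Delta$. Parametrizing any admissible transformation as $\hat{T}=W^{-1}T_1$, the requirement $\hat{T}^{\ast}\hat{\Delta}_0\hat{T}=\Delta$ collapses to $W^{\ast}\hat{\Delta}_0 W=\hat{\Delta}_0$, i.e. $W$ must lie in the stabilizer $G:=\{W:\ W^{\ast}\hat{\Delta}_0 W=\hat{\Delta}_0\}$. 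Setting the Hermitian matrix $\Sigma_1:=(T_1^{-1})^{\ast}\Sigma\,T_1^{-1}$, the second requirement becomes: find $W\in G$ for which $W^{\ast}\Sigma_1 W$ is real symmetric, after which $\hat{\Sigma}_0=W^{\ast}\Sigma_1 W$ and $\hat{T}=W^{-1}T_1$ work. Since $W^{\ast}\Sigma_1 W$ is automatically Hermitian, its diagonal is already real and the \emph{only} thing to arrange is that its $(1,2)$-entry be real.

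For this key step I would use the explicit one-parameter subgroup $W_\theta=\cos\theta\,I+\text{j}\sin\theta\,\hat{\Delta}_0=\left[\begin{smallmatrix}\cos\theta&\text{j}\sin\theta\\\text{j}\sin\theta&\cos\theta\end{smallmatrix}\right]$; using $\hat{\Delta}_0^{2}=I$ one checks $W_\theta\in G$ and $W_\theta W_{\theta'}=W_{\theta+\theta'}$. Writing $\Sigma_1=\left[\begin{smallmatrix}a&c\\\bar{c}&b\end{smallmatrix}\right]$ with $a,b\in\mathbb{R}$ and $c=c_R+\text{j}c_I$, a short computation gives the $(1,2)$-entry of $W_\theta^{\ast}\Sigma_1 W_\theta$ as $c_R+\text{j}\!\left(c_I\cos 2\theta+\tfrac{a-b}{2}\sin 2\theta\right)$, whose imaginary part has the form $A\cos 2\theta+B\sin 2\theta$ and can therefore always be driven to zero by a suitable $\theta$. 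This produces the desired real symmetric $\hat{\Sigma}_0=W_\theta^{\ast}\Sigma_1 W_\theta$ with $\hat{T}=W_\theta^{-1}T_1=W_{-\theta}T_1$. Finally, to secure $\alpha\le\gamma$, note that $\hat{\Delta}_0$ itself lies in $G$ and conjugating $\hat{\Sigma}_0$ by it swaps $\alpha\leftrightarrow\gamma$ while fixing $\beta$, so applying it if needed gives the ordering. The step I expect to be the only genuine obstacle is the simultaneous reduction above: a priori it is unclear that the lone residual degree of freedom in $G$ which moves the off-diagonal phase is enough to real-ify it in every configuration of $\Sigma_1$, and the clean identity for $W_\theta$ is what resolves it; the phase factorization, the inertia argument, and the final swap are routine.
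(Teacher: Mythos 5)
Your proof is correct, and at the skeleton level it is the same strategy as the paper's: use $\det(\Delta)<0$ and inertia to congruence $\Delta$ to the normal form, then spend the residual freedom --- the congruence stabilizer of that normal form --- on normalizing $\Sigma$. Where you genuinely differ is in how the stabilizer step is executed, and in where the phase $\bm{\varphi}$ is handled. The paper keeps the phase inside the machinery: it first proves an auxiliary lemma that any Hermitian $Y$ factors as $L^{\ast}\Sigma_{0}L$ with $L\in\mathcal{L}=\{Q^{\ast}ZQ:Z\in\mathbb{R}^{2\times2},\ \det Z=1\}$, $Q=\mathrm{diag}(1,\text{j}e^{\text{j}\bm{\varphi}})$, and proves that lemma by $Q$-conjugating to a real symmetric matrix $Y_{0}$ and invoking its spectral factorization; this makes $\alpha,\gamma$ the eigenvalues of $Y_{0}$, so the ordering $\alpha\leq\gamma$ comes for free. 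You instead strip the phase up front with the unitary $P$, and replace the appeal to the spectral theorem by the explicit one-parameter subgroup $W_{\theta}=\cos\theta\,I+\text{j}\sin\theta\,\hat{\Delta}_{0}$ plus a trigonometric zero-crossing argument, paying a small price: the extra swap by $\hat{\Delta}_{0}$ to secure $\alpha\leq\gamma$. It is worth noting that the two normalizations are secretly the same computation: your $W_{\theta}$ equals $Q_{0}^{\ast}R_{\theta}Q_{0}$ for the planar rotation $R_{\theta}$ (with $Q_{0}=\mathrm{diag}(1,\text{j})$ the $\bm{\varphi}=0$ instance of the paper's $Q$), and the equation $c_{I}\cos2\theta+\tfrac{a-b}{2}\sin2\theta=0$ that you solve is exactly the Jacobi condition for $R_{\theta}$ to diagonalize the paper's $Y_{0}$. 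So your route is the paper's argument with the $2\times2$ spectral theorem unwound into elementary trigonometry --- fully self-contained and more explicit, at the cost of losing the eigenvalue interpretation of $\alpha,\gamma$, which nothing downstream in the paper actually uses (only the normal form and the ordering matter there).
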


Before we prove this Lemma, we prove the following Lemma first.

\begin{lemma}
Let $Y\in \mathcal{H}_{2}$ be given. Then, $Y$ admits the following
factorization:%
\begin{equation}
Y=L^{\ast}\left[
\begin{array}
[c]{cc}%
\alpha & \beta e^{\text{j}\bm{\varphi}}\\
\beta e^{-\text{j}\bm{\varphi}} & \gamma
\end{array}
\right]  L
\end{equation}

where $\alpha,\beta,\gamma \in%
\mathbb{R}
$, $L\in \mathcal{L}$ with
\[
\mathcal{L}\triangleq \{
Q^{\ast}ZQ:Z\in%
\mathbb{R}
^{2\times2},\det(Z)=1
\} \  \ Q\triangleq \left[
\begin{array}
[c]{cc}%
1 & 0\\
0 & je^{\text{j}\bm{\varphi}}%
\end{array}
\right]  .
\]
In particular, $\alpha$ and $\gamma$ are the eigenvalues of real matrix $Y_{0}%
\triangleq \left[
\begin{array}
[c]{cc}%
x & y\\
y & z
\end{array}
\right]  .$
\end{lemma}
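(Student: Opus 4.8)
The plan is to exploit the fact that $Q=\operatorname{diag}(1,\text{j}e^{\text{j}\bm{\varphi}})$ is \emph{unitary}: both diagonal entries have unit modulus, so $Q^{\ast}Q=I$ with $Q^{\ast}=\operatorname{diag}(1,-\text{j}e^{-\text{j}\bm{\varphi}})$. Writing a generic element of $\mathcal{L}$ as $L=Q^{\ast}ZQ$ (with $Z\in\mathbb{R}^{2\times2}$, $\det Z=1$) and letting $M$ denote the target matrix with entries $\alpha,\beta e^{\pm\text{j}\bm{\varphi}},\gamma$, the claimed identity $Y=L^{\ast}ML$ becomes, after conjugating both sides by $Q$ and using $L^{\ast}=Q^{\ast}Z^{T}Q$, equivalent to $\widetilde{Y}=Z^{T}\widetilde{M}Z$, where
\[
\widetilde{Y}:=QYQ^{\ast},\qquad \widetilde{M}:=QMQ^{\ast}=\begin{bmatrix} \alpha & -\text{j}\beta\\ \text{j}\beta & \gamma\end{bmatrix}.
\]
Since conjugation by the unitary $Q$ maps $\mathcal{H}_{2}$ bijectively onto itself, it suffices to realize an \emph{arbitrary} $\widetilde{Y}\in\mathcal{H}_{2}$ in the form $Z^{T}\widetilde{M}Z$ with $Z$ real and $\det Z=1$.

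Next I would split $\widetilde{M}$ into its real symmetric and imaginary antisymmetric parts, $\widetilde{M}=\operatorname{diag}(\alpha,\gamma)+\text{j}\beta J$ with $J=\begin{bmatrix}0 & -1\\ 1 & 0\end{bmatrix}$. Because $Z$ is real, $Z^{T}\widetilde{M}Z$ has real part $Z^{T}\operatorname{diag}(\alpha,\gamma)Z$ (symmetric) and imaginary part $\beta\,Z^{T}JZ$ (antisymmetric). The elementary $2\times2$ identity $Z^{T}JZ=\det(Z)\,J$ then shows that, once $\det Z=1$, this antisymmetric part equals $\beta J$ \emph{independently of} $Z$. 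Matching it against the imaginary part of $\widetilde{Y}$ — which is automatically antisymmetric since $\widetilde{Y}$ is Hermitian, hence of the form $\beta_{0}J$ for a unique real $\beta_{0}$ — fixes the value $\beta:=\beta_{0}$ and disposes of the imaginary constraint for free.

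It then remains to realize the real symmetric part: I must find a real $Z$ with $\det Z=1$ such that $Z^{T}\operatorname{diag}(\alpha,\gamma)Z=Y_{0}$, where $Y_{0}:=\operatorname{Re}(\widetilde{Y})=\begin{bmatrix}x & y\\ y & z\end{bmatrix}$ is exactly the real symmetric matrix named in the statement. By the spectral theorem there is an orthogonal $O$ with $Y_{0}=O\operatorname{diag}(\alpha,\gamma)O^{T}$, where $\alpha,\gamma$ are precisely the eigenvalues of $Y_{0}$; taking $Z=O^{T}$ gives $Z^{T}\operatorname{diag}(\alpha,\gamma)Z=Y_{0}$ with $\det Z=\pm1$. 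If $\det O=-1$, I replace $O$ by $O\operatorname{diag}(-1,1)$, which leaves the diagonalization $O\operatorname{diag}(\alpha,\gamma)O^{T}=Y_{0}$ intact (the sign cancels because $\operatorname{diag}(\alpha,\gamma)$ is diagonal) while flipping $\det O$ to $+1$. This produces a real $Z$ with $\det Z=1$, so that $L=Q^{\ast}ZQ\in\mathcal{L}$, and combining the two parts yields $Y=L^{\ast}ML$ with $\alpha,\gamma$ the eigenvalues of $Y_{0}$, as claimed.

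The conceptual heart of the argument, and the step I expect to be the main obstacle, is recognizing that conjugation by the unitary $Q$ \emph{decouples} the complex factorization into two independent pieces: an antisymmetric constraint satisfied automatically whenever $\det Z=1$ (via $Z^{T}JZ=\det(Z)J$), and a real symmetric diagonalization handled by the spectral theorem. Once this decoupling is seen, the only genuinely delicate bookkeeping is the determinant-sign adjustment that keeps $Z$ in $SL(2,\mathbb{R})$ rather than merely orthogonal, which I handle by the column-negation trick above; everything else reduces to routine $2\times2$ computation.
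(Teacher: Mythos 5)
Your proposal is correct and takes essentially the same route as the paper: both conjugate by the unitary $Q$ to reduce to a real $2\times 2$ problem, both split off the $\beta$-term whose invariance under $\det Z=1$ you establish via $Z^{T}JZ=\det(Z)\,J$ (exactly the identity $L^{\ast}\Delta_{0}L=\Delta_{0}$ that the paper asserts as ``readily verified''), and both finish by spectrally factoring the real symmetric matrix $Y_{0}=\mathrm{Re}(QYQ^{\ast})$ with the same determinant-sign adjustment to stay in $SL(2,\mathbb{R})$. The only difference is expository direction: you transform the target identity and then solve it, whereas the paper parametrizes $Y$, constructs $Y_{0}=Q\left(Y-\beta\Delta_{0}\right)Q^{\ast}$, factors it, and reassembles.
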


\begin{proof}
Choose $Y$ as%
\begin{equation}
Y=\left[
\begin{array}
[c]{cc}%
x & (\beta+jy)e^{\text{j}\bm{\varphi}}\\
(\beta-jy)e^{-\text{j}\bm{\varphi}} & z
\end{array}
\right]
\end{equation}

where\ $\beta,x,y,z\in%
\mathbb{R}
.$

Then%
\begin{equation}
Y_{0}=Q\left(  Y-\left[
\begin{array}
[c]{cc}%
0 & \beta e^{\text{j}\bm{\varphi}}\\
\beta e^{-\text{j}\bm{\varphi}} & 0
\end{array}
\right]  \right)  Q^{\ast} \label{EqY0}%
\end{equation}

Since $Y_{0}$ is real symmetric, the spectral factorization of $Y_{0}$ gives%
\begin{equation}
Y_{0}=Z^{T}\left[
\begin{array}
[c]{cc}%
\alpha & 0\\
0 & \gamma
\end{array}
\right]  Z
\end{equation}

where the columns of $Z^{T}$ are eigenvectors and $\alpha$ and $\gamma$ are
eigenvalues. Moreover, $Z$ can be chosen to satisfy $\det(Z)=1.$ Then,
$L\triangleq Q^{\ast}ZQ$ belongs to $\mathcal{L.}$ Now, from (\ref{EqY0}) we
get%
\begin{align}
Y  &  =Q^{-1}Y_{0}Q^{\ast-1}+\beta \left[
\begin{array}
[c]{cc}%
0 & e^{\text{j}\bm{\varphi}}\\
e^{-\text{j}\bm{\varphi}} & 0
\end{array}
\right] \nonumber \\
&  =L^{\ast}\left[
\begin{array}
[c]{cc}%
\alpha & 0\\
0 & \gamma
\end{array}
\right]  L+\beta \left[
\begin{array}
[c]{cc}%
0 & e^{\text{j}\bm{\varphi}}\\
e^{-\text{j}\bm{\varphi}} & 0
\end{array}
\right]
\end{align}

Finally, it can readily be verified that%
\begin{equation}
L^{\ast}\left[
\begin{array}
[c]{cc}%
0 & e^{\text{j}\bm{\varphi}}\\
e^{-\text{j}\bm{\varphi}} & 0
\end{array}
\right]  L=\left[
\begin{array}
[c]{cc}%
0 & e^{\text{j}\bm{\varphi}}\\
e^{-\text{j}\bm{\varphi}} & 0
\end{array}
\right]
\end{equation}

holds for any $L\in \mathcal{L.}$

Therefore, we now can obtain the result%
\[
Y=L^{\ast}\left[
\begin{array}
[c]{cc}%
\alpha & \beta e^{\text{j}\bm{\varphi}}\\
\beta e^{-\text{j}\bm{\varphi}} & \gamma
\end{array}
\right]  L
\]

\end{proof}

\begin{proof}
[proof of Lemma \ref{LemTrans}]Since $\det(\Delta)<0$, there exists a nonsingular matrix $K$
such that%
\begin{equation}
\Delta=K^{\ast}\left[
\begin{array}
[c]{cc}%
0 & e^{\text{j}\bm{\varphi}}\\
e^{-\text{j}\bm{\varphi}} & 0
\end{array}
\right]  K
\end{equation}

holds. Let $Y\triangleq K^{\ast-1}\Sigma K^{-1}$, then we get%
\begin{equation}
\Sigma=K^{\ast}L^{\ast}\left[
\begin{array}
[c]{cc}%
\alpha & \beta e^{\text{j}\bm{\varphi}}\\
\beta e^{-\text{j}\bm{\varphi}} & \gamma
\end{array}
\right]  LK
\end{equation}

Therefore, the Lemma \ref{LemTrans} is proved by defining $T\triangleq LK$.
Since $\alpha,\gamma$ are the eigenvalues of $Y_{0}$, they can be ordered so
that $\alpha \leq \gamma.$
\end{proof}

\begin{lemma}\cite{Iwasaki2005Generalized}
Let $\Delta_{0},\Sigma_{0}\in \mathcal{H}_{2}$ and nonsingular $T\in%
\mathbb{C}
^{2\times2}$ be given. Define scalars $a,b,c$ and $d$ and function $E(s)$ by%
\begin{equation}
\left[
\begin{array}
[c]{cc}%
a & b\\
c & d
\end{array}
\right]  \triangleq T\  \    E(s)\triangleq \  \dfrac{b-ds}{cs-a}%
\end{equation}

Then, the following holds true%
\begin{align}
&  \left \{  \theta\in%
\mathbb{C}
:\theta\in \mathbf{\Theta}(T^{\ast}\Delta_{0}T,\ T^{\ast}\Sigma
_{0}T),\ c\theta+d\neq0\right \} \nonumber \\
&  =\{E(s)\in%
\mathbb{C}
:s\in \mathbf{\Theta}(\Delta_{0},\Sigma_{0}),\ cs\mathcal{\neq}a\}
\end{align}

\end{lemma}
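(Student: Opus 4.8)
The plan is to recognize $E$ as the Möbius transformation inverse to the map $\theta\mapsto s:=(a\theta+b)/(c\theta+d)$, and to observe that the congruence by $T$ scales the quadratic forms $\rho(\cdot,\Delta_0)$ and $\rho(\cdot,\Sigma_0)$ by a strictly positive factor, so that the equality and inequality conditions defining $\mathbf{\Theta}$ are left unchanged. The whole statement is then an equality of two descriptions of the same set, read off from a single algebraic identity.

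First I would record that basic identity. Writing $w_\theta\triangleq\begin{bmatrix}\theta\\1\end{bmatrix}$, we have $Tw_\theta=\begin{bmatrix}a\theta+b\\c\theta+d\end{bmatrix}$, and whenever $c\theta+d\neq0$ this factors as $Tw_\theta=(c\theta+d)\,w_s$ with $s=(a\theta+b)/(c\theta+d)$ and $w_s\triangleq\begin{bmatrix}s\\1\end{bmatrix}$. Consequently, for any $\Xi\in\mathcal{H}_2$,
\[
\rho(\theta,\,T^{\ast}\Xi T)=w_\theta^{\ast}T^{\ast}\Xi Tw_\theta=|c\theta+d|^{2}\,w_s^{\ast}\Xi w_s=|c\theta+d|^{2}\,\rho(s,\Xi).
\]
Applying this with $\Xi=\Delta_0$ and $\Xi=\Sigma_0$ and using $|c\theta+d|^2>0$, the condition $\rho(\theta,T^{\ast}\Delta_0T)=0$ is equivalent to $\rho(s,\Delta_0)=0$, and $\rho(\theta,T^{\ast}\Sigma_0T)\geq0$ is equivalent to $\rho(s,\Sigma_0)\geq0$. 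Hence $\theta\in\mathbf{\Theta}(T^{\ast}\Delta_0T,T^{\ast}\Sigma_0T)$ with $c\theta+d\neq0$ if and only if the associated $s$ lies in $\mathbf{\Theta}(\Delta_0,\Sigma_0)$.

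It remains to match the parameter and the two side conditions. Solving $s=(a\theta+b)/(c\theta+d)$ for $\theta$ returns exactly $\theta=(b-ds)/(cs-a)=E(s)$, so the two maps are mutual inverses. The decisive link between the exclusion conditions is the identity
\[
cs-a=\frac{c(a\theta+b)-a(c\theta+d)}{c\theta+d}=\frac{bc-ad}{c\theta+d}=\frac{-\det T}{c\theta+d},
\]
and symmetrically $cE(s)+d=-\det T/(cs-a)$. Since $T$ is nonsingular, $\det T\neq0$, whence $c\theta+d\neq0\Leftrightarrow cs\neq a$. Thus the admissible $\theta$'s correspond bijectively, via $\theta=E(s)$, to the admissible $s$'s in $\mathbf{\Theta}(\Delta_0,\Sigma_0)$, and running this equivalence in both directions yields the two inclusions, i.e. the claimed set equality.

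The computations are routine; the only step requiring genuine care — and the one place where nonsingularity of $T$ is actually invoked — is verifying that the excluded loci ($c\theta+d=0$ on the left, $cs=a$ on the right) correspond under $E$, so that no spurious points are introduced or dropped when translating between the two descriptions. Everything else reduces to the standard observation that congruence by an invertible matrix multiplies the scalar $\rho$ by the positive quantity $|c\theta+d|^2$ and therefore preserves both the vanishing and the sign of the form.
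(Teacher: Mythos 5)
Your proof is correct and complete: the factorization $Tw_\theta=(c\theta+d)\,w_s$, the observation that the positive factor $|c\theta+d|^{2}$ preserves both the vanishing of $\rho(\cdot,\Delta_0)$ and the sign of $\rho(\cdot,\Sigma_0)$, and the determinant identities $cs-a=-\det T/(c\theta+d)$ and $cE(s)+d=-\det T/(cs-a)$ together give both inclusions with no gaps. Note, however, that the paper itself offers no proof of this lemma --- it is stated with a citation to the reference on the generalized KYP lemma and used as a black box --- so there is no in-paper argument to compare against; your self-contained M\"obius-transformation argument is the standard one and is essentially the proof given in that cited reference.
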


\begin{remark}
This Lemma shows that
$\mathbf{\Theta}(T^{\ast}\Delta_{0}T,T^{\ast}\Sigma_{0}T)$
represents curve(s) if and only if $\mathbf{\Theta}(\Delta_{0},\Sigma_{0})$
dose so. When $\mathbf{\Theta}(\Delta,\Sigma)=\mathbf{\Theta}(T^{\ast}\Delta_{0}T,T^{\ast}\Sigma_{0}T)$,
$\mathbf{\Theta}(\Delta,\Sigma)$ represents
curve(s) if and only if $\mathbf{\Theta}(\Delta_{0},\Sigma_{0})$ dose so.
\end{remark}

Now, we examine the set $\mathbf{\Theta}(\Delta_0,\Sigma_0)$ with $\Delta_0$ and $\Sigma_0$ defined in (\ref{DefDelt0}).
Note that $\rho(\theta,\Delta_{0})=0$ holds if and only if
$\theta=\text{j}^{\nu}W$ for some $W \in%
\mathbb{R}
$. For such $\theta$, $\rho(\theta,\Sigma_{0})=\alpha W^2+\gamma$. $\mathbf{\Theta}(\Delta_{0},\Sigma_{0})$ represents curve(s) on the
complex plane, thus $\rho(\theta,\Sigma_{0})\geq0$ holds for some $W \in%
\mathbb{R}
$, which implies either $0\leq\alpha\leq\gamma$ or $\alpha<0<\gamma$.

\begin{proposition}\label{propCurve}
Let $\Delta,\Sigma \in \mathcal{H}_{2}$ be given and define the set
$\mathbf{\Theta}(\Delta,\Sigma )$ by (\ref{DefCurve}). For
FOS, the set $\mathbf{\Theta}(\Delta,\Sigma)$ represents
curve(s) on the complex plane if and only if the following two conditions hold:
\begin{itemize}
  \item $\det(\Delta)<0$
  \item either $0\leq\alpha\leq\gamma$ or $\alpha<0<\gamma$
\end{itemize}
where $\alpha,\beta$ and $\gamma$ are defined in (\ref{DefDelt0}).
\end{proposition}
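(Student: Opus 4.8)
The plan is to prove both implications by reducing the general pair $(\Delta,\Sigma)$ to the canonical pair $(\Delta_0,\Sigma_0)$ of (\ref{DefDelt0}) through the congruence of Lemma \ref{LemTrans}, and then reading off the geometry of $\mathbf{\Theta}(\Delta_0,\Sigma_0)$ directly from the parametrization computed immediately before the statement. The two conditions on $\det(\Delta)$ and on $(\alpha,\gamma)$ will be handled in that order, since establishing $\det(\Delta)<0$ is precisely what licenses the use of Lemma \ref{LemTrans}.

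For necessity, suppose $\mathbf{\Theta}(\Delta,\Sigma)$ represents curve(s). First I would force $\det(\Delta)<0$. Since $\mathbf{\Theta}(\Delta,\Sigma)=\mathbf{\Theta}(\Delta,0)\cap\mathbf{\Theta}(0,\Sigma)\subseteq\mathbf{\Theta}(\Delta,0)$, a curve contained in $\mathbf{\Theta}(\Delta,\Sigma)$ forces $\mathbf{\Theta}(\Delta,0)$ to contain infinitely many points. By the remark following (\ref{DefCurve}), $\mathbf{\Theta}(\Delta,0)$ represents a curve if and only if $\det(\Delta)<0$; when $\det(\Delta)>0$, or when $\det(\Delta)=0$ with $\Delta\neq0$, the level set $\rho(\theta,\Delta)=0$ is at most a single point, and when $\Delta=0$ it is all of $\mathbb{C}$, a two-dimensional region that does not \emph{represent a curve} in the sense of the definition. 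Hence every case with $\det(\Delta)\geq0$ is excluded and $\det(\Delta)<0$.

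Once $\det(\Delta)<0$ is in hand, I invoke Lemma \ref{LemTrans} to write $\Delta=T^{\ast}\Delta_0T$ and $\Sigma=T^{\ast}\Sigma_0T$, and then the congruence lemma preceding this statement (the one built from $E(s)$) to transfer the curve property: $\mathbf{\Theta}(\Delta,\Sigma)$ represents curve(s) if and only if $\mathbf{\Theta}(\Delta_0,\Sigma_0)$ does. The computation recorded just above the proposition gives $\mathbf{\Theta}(\Delta_0,\Sigma_0)=\{\text{j}^{\nu}W:W\in\mathbb{R},\ \alpha W^2+\gamma\geq0\}$, so representing curve(s) is equivalent to the solution set $\{W\in\mathbb{R}:\alpha W^2+\gamma\geq0\}$ being infinite. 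A short case analysis of $f(W)=\alpha W^2+\gamma$, using the ordering $\alpha\leq\gamma$ supplied by Lemma \ref{LemTrans}, shows the solution set is infinite exactly when $\alpha\geq0$ (equivalently $0\leq\alpha\leq\gamma$, where $f\geq0$ on all of $\mathbb{R}$) or when $\alpha<0<\gamma$ (where $f\geq0$ on the bounded interval $|W|\leq\sqrt{-\gamma/\alpha}$); the residual cases $\alpha<0,\gamma=0$ and $\alpha<0,\gamma<0$ give a single point and the empty set, both excluded. Run in reverse, this same chain yields sufficiency: under either condition the solution set is a nondegenerate interval, so $W\mapsto\text{j}^{\nu}W$ parametrizes $\mathbf{\Theta}(\Delta_0,\Sigma_0)$ as a curve, and the congruence lemma carries this back to $\mathbf{\Theta}(\Delta,\Sigma)$.

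The step I expect to demand the most care is the necessity of $\det(\Delta)<0$, and in particular ruling out the degenerate boundary case $\det(\Delta)=0$ and the trivial case $\Delta=0$: here one must appeal to the precise meaning of representing a curve in the definition rather than merely counting points, since the $\Delta=0$ level set is all of $\mathbb{C}$. By contrast, the case analysis of $f(W)$ is entirely routine once the ordering $\alpha\leq\gamma$ is available.
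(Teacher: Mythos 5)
Your proof is correct and follows essentially the same route the paper itself takes: reduce to the canonical pair $(\Delta_0,\Sigma_0)$ via Lemma \ref{LemTrans} and the $E(s)$ transfer lemma, parametrize $\rho(\theta,\Delta_0)=0$ as $\theta=\text{j}^{\nu}W$ so that $\rho(\theta,\Sigma_0)=\alpha W^{2}+\gamma$, and do the sign analysis under the ordering $\alpha\leq\gamma$. If anything, your write-up is tighter than the paper's informal justification (the paragraph preceding the proposition), since you correctly demand infinitely many admissible $W$ rather than "some $W$", which is what actually rules out the degenerate case $\alpha<0,\ \gamma=0$.
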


Frequency $\omega\geq0$ and let $\omega$ belong to the principal Riemann surface \cite{Beyer1995Definition}, thus $\omega^{\nu}\in\mathbb{R}^{+}$.
Let $W=\omega^{\nu}$, then $W(\omega)$ is a monotone increasing function.
It's obvious that the set $\mathbf{\Theta}$ can represent a certain
range of the frequency variable $\theta$. For the continuous-time setting, we get%

\begin{equation}
\Delta=\left[
\begin{array}
[c]{cc}%
0 & e^{\text{j}\bm{\varphi}}\\
e^{-\text{j}\bm{\varphi}} & 0
\end{array}
\right]  \  \  \mathbf{\Theta}=\left \{  (\text{j}\omega)^{\nu}:\omega \in \mathbf{\Omega
}\right \}
\end{equation}

where $\mathbf{\Omega}$ is a subset of real numbers which is specified by an
additional choice of $\Sigma$, for example, as follows:
\[%
\begin{tabular}
[c]{|c|c|c|c|}
\hline
& LF & MF & HF\\
\hline
$\Omega$ & $0\leq\omega \leq \omega_{L}$ & $0\leq \omega
_{1}\leq \omega \leq \omega_{2}$ & $\omega \geq \omega_{H}\geq0$\\
\hline
$\Sigma$ & $\left[
\begin{array}
[c]{cc}%
-1 & 0\\
0 & \omega_{L}^{2\nu}%
\end{array}
\right]  $ & $\left[
\begin{array}
[c]{cc}%
-1 & \omega_{c}\\
\overline{\omega}_{c} & -\omega_{1}^{\nu}\omega_{2}^{\nu}%
\end{array}
\right]  $ & $\left[
\begin{array}
[c]{cc}%
1 & 0\\
0 & -\omega_{H}^{2\nu}%
\end{array}
\right]  $\\
\hline
\end{tabular}
\]
where $\omega_{c}=\dfrac{\text{j}^{\nu}(\omega_{1}^{\nu}+\omega_{2}^{\nu})}{2}$, and LF, MF, HF stand for low, middle, high frequency ranges, respectively.

\begin{remark}
We now can see that the main technical steps to arrive at the FGKYP lemma for
finite frequency FOS are to choose an appropriate set $\mathcal{F}$.
\end{remark}

\section{FGKYP for $L_{\infty}$ Norm of FOS}

\subsection{Main Theorem}

For the FDI in (\ref{EqLDI}), the set $\mathcal{G}_{1}$ should be given as
\begin{equation}
\mathcal{G}_{1}=\left \{  \eta \eta^{\ast}:\eta=\left[
\begin{array}
[c]{c}%
((\text{j}\omega)^{\nu}I-A)^{-1}B\\
I
\end{array}
\right]  \zeta,%
\begin{array}
[c]{c}%
\zeta \in%
\mathbb{C}
^{m},\zeta \neq0\\
\omega \in%
\mathbb{R}^{+}
\cup \{ \infty \}
\end{array}
\right \} \label{defG1}
\end{equation}

This set can be described as%
\begin{align}
\mathcal{G}_{1} & =\left \{  \eta \eta^{\ast}:\eta \in \mathcal{M}_{\theta},\ \theta
\in \overline{\mathbf{\Theta}}\right \}\notag\\
\mathcal{M}_{\theta} & \triangleq \left \{  \eta \in%
\mathbb{C}
^{n+m}:\eta \neq0,\Xi_{\theta}N\eta=0\right \}  \label{DefGMsita}%
\end{align}
where $\overline{\mathbf{\Theta}}\triangleq (\text{j}\mathbb{R}^+)^{\nu}\cup \{ \infty \}$ and
\begin{equation}
\Xi_{\theta}\triangleq \left \{
\begin{array}
[c]{cc}%
\left[  I_{n}\ -\theta I_{n}\right]  & (\theta \in%
\mathbb{C}
)\\
\left[  0\ -I_{n}\right]  & (\theta=\infty)
\end{array}
\right.\text{,}\ \
N\triangleq \left[
\begin{array}
[c]{cc}%
A & B\\
I_{n} & 0
\end{array}
\right]
\label{DefXitheta}%
\end{equation}

Therefore, when $\mathbf{\Theta}$ is defined in (\ref{DefCurve}),
$\overline{\mathbf{\Theta}}$ is defined as
\begin{equation}
\overline{\mathbf{\Theta}}\triangleq \left \{
\begin{tabular}
[c]{cc}%
$\mathbf{\Theta}$, & if $\mathbf{\Theta}$ is bounded\\
$\mathbf{\Theta}\cup \left \{  \infty \right \}$, & otherwise
\end{tabular}
\  \  \  \  \  \  \  \  \right.  \label{DefOSita}%
\end{equation}

Now, the main steps to obtain FGKYP lemma for FOS are to choose an appropriate
set $\mathcal{F}$ in (\ref{DefFG}) and then express $\mathcal{G}_1$ in (\ref{DefGMsita}) as in (\ref{DefFG}),
which should led to the result that the $S$-procedure is lossless.

\begin{lemma}\label{lemEquivtPsi}
\cite{Iwasaki2005Generalized}
Let $\Delta_0,\ \Sigma_0\in \mathcal{H}_2$ and a nonsingular matrix $T\in\mathbb{C}^{2\times 2}$ be given
and define $\Delta,\Sigma\in\mathcal{H}_2$ by (\ref{equDelSig}). Consider $\Xi_{\theta}$ in (\ref{DefXitheta}),
$\mathbf{\Theta}(\Delta,\Sigma)$ in (\ref{DefCurve}) and $\overline{\mathbf{\Theta}}(\Delta,\Sigma)$ in (\ref{DefOSita}).
Suppose $\mathbf{\Theta}(\Delta,\Sigma)$ represents curve(s). The following conditions on a given vector $\psi\in\mathbb{C}^{2n}$
are equivalent.

i) $\Xi_{\theta}\psi=0$ holds for some $\theta\in\overline{\mathbf{\Theta}}(\Delta,\Sigma)$.

ii) $\Xi_{s}(T\otimes I)\psi=0$ holds for some $s\in\overline{\mathbf{\Theta}}(\Delta_0,\Sigma_0)$.
\end{lemma}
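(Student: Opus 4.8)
The plan is to recast both conditions through a rank-one (pure-tensor) factorization of $\psi$ and then exploit the congruence $\Delta=T^{\ast}\Delta_0 T$, $\Sigma=T^{\ast}\Sigma_0 T$ of (\ref{equDelSig}) together with the mixed-product property of the Kronecker product. First I would dispose of the trivial case $\psi=0$: since $\mathbf{\Theta}(\Delta,\Sigma)$ represents curves it is nonempty, so $\Xi_\theta\psi=0$ holds for every $\theta$ and both i) and ii) are vacuously true; hence assume $\psi\neq0$ and write $\psi=\begin{bmatrix}\psi_1\\\psi_2\end{bmatrix}$ with $\psi_1,\psi_2\in\mathbb{C}^{n}$. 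For $v=\begin{bmatrix}v_1\\v_2\end{bmatrix}\in\mathbb{C}^2\setminus\{0\}$ write $[v]\triangleq v_1/v_2\in\mathbb{C}\cup\{\infty\}$ (read as $\infty$ when $v_2=0$), and set $v_\theta\triangleq\begin{bmatrix}\theta\\1\end{bmatrix}$ for finite $\theta$ and $v_\infty\triangleq\begin{bmatrix}1\\0\end{bmatrix}$. The key elementary observation is that the kernel of $\Xi_\theta$ equals $\{v_\theta\otimes w:w\in\mathbb{C}^{n}\}$ in both the finite and the infinite case.

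Next I would characterize condition i). By the kernel description, $\Xi_\theta\psi=0$ for some $\theta$ holds if and only if $\psi$ is a \emph{pure tensor} $\psi=v\otimes w$ with $v\neq0$ and $w\neq0$; the factor $v$ is then unique up to scale, the corresponding frequency point is $\theta=[v]$, and the membership $\theta\in\overline{\mathbf{\Theta}}(\Delta,\Sigma)$ becomes the scalar pair $v^{\ast}\Delta v=0$, $v^{\ast}\Sigma v\geq0$. For finite $\theta$ these are exactly $\rho(\theta,\Delta)=0$ and $\rho(\theta,\Sigma)\geq0$ by the definition (\ref{DefChi}) of $\rho$; for $v=v_\infty$ they read $\Delta_{11}=0$, $\Sigma_{11}\geq0$, which by the earlier unboundedness lemma is precisely the condition under which $\infty$ is adjoined in passing from $\mathbf{\Theta}$ to $\overline{\mathbf{\Theta}}$ in (\ref{DefOSita}). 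Thus, uniformly, condition i) is equivalent to: $\psi=v\otimes w$ with $w\neq0$ and $v^{\ast}\Delta v=0$, $v^{\ast}\Sigma v\geq0$.

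Then I would transport this through $T\otimes I$. The mixed-product property gives $(T\otimes I)(v\otimes w)=(Tv)\otimes w$, and because $T$ is nonsingular $\psi$ is a pure tensor if and only if $(T\otimes I)\psi$ is, with the same nonzero factor $w$ and first factor $Tv\neq0$. Repeating the kernel computation with $\Xi_s$ then shows condition ii) is equivalent to: $\psi=v\otimes w$ with $w\neq0$ and $(Tv)^{\ast}\Delta_0(Tv)=0$, $(Tv)^{\ast}\Sigma_0(Tv)\geq0$ (here I would note that the preceding remark guarantees $\mathbf{\Theta}(\Delta_0,\Sigma_0)$ also represents curves, so the unboundedness criterion applies on this side too). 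The equivalence of i) and ii) now collapses onto the single \emph{congruence identity} $v^{\ast}\Delta v=v^{\ast}T^{\ast}\Delta_0 Tv=(Tv)^{\ast}\Delta_0(Tv)$ and its analogue for $\Sigma$: the two scalar pairs coincide verbatim, so the bracketed conditions are literally the same. In projective terms the underlying correspondence is $\theta=[v]\mapsto s=[Tv]$, i.e.\ the M\"obius map $s=(a\theta+b)/(c\theta+d)$ inverse to the map $E$ of the preceding lemma, and the congruence shows it carries $\overline{\mathbf{\Theta}}(\Delta,\Sigma)$ bijectively onto $\overline{\mathbf{\Theta}}(\Delta_0,\Sigma_0)$.

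I expect the main obstacle to be the bookkeeping at the point at infinity. The preceding M\"obius lemma is stated only on the finite, nonsingular part ($c\theta+d\neq0$ and $cs\neq a$), so a direct argument would have to match separately the cases $\theta=\infty$, the pole $c\theta+d=0$ (finite $\theta$ sent to $s=\infty$), and the exceptional value $cs=a$ (finite $s$ arising from $\theta=\infty$), and check that each respects the closure operation in (\ref{DefOSita}). The device that removes this casework, and which I would adopt, is exactly the uniform pure-tensor/projective formulation above: representing $\theta$ and $s$ by the vectors $v$ and $Tv$ makes $\infty$ an ordinary point, and identifying ``$\infty\in\overline{\mathbf{\Theta}}$'' with ``$\Delta_{11}=0,\ \Sigma_{11}\geq0$'' via the unboundedness lemma folds the infinite case into the same scalar conditions, leaving only the one-line congruence identity to finish.
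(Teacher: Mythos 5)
Your proposal is correct, but there is nothing in the paper to compare it against: the paper states Lemma \ref{lemEquivtPsi} with a citation to Iwasaki--Hara and gives no proof at all, so your argument supplies something the paper omits. On its merits, the proof is sound and complete. The three pillars all check out: (a) the kernel of $\Xi_{\theta}$ is exactly $\{v_{\theta}\otimes w : w\in\mathbb{C}^n\}$ with $v_{\theta}=[\theta\ 1]^{T}$ (resp.\ $v_{\infty}=[1\ 0]^{T}$), so for $\psi\neq 0$ condition i) becomes ``$\psi=v\otimes w$, $v,w\neq0$, $v^{\ast}\Delta v=0$, $v^{\ast}\Sigma v\geq0$,'' where the identification of ``$\infty\in\overline{\mathbf{\Theta}}$'' with ``$\Delta_{11}=0,\ \Sigma_{11}\geq0$'' is legitimately licensed by the paper's unboundedness lemma under the standing hypothesis that $\mathbf{\Theta}(\Delta,\Sigma)$ represents curves; (b) the mixed-product identity $(T\otimes I)(v\otimes w)=(Tv)\otimes w$ together with nonsingularity of $T$ gives the same characterization of ii) with $Tv$ in place of $v$ and $(\Delta_0,\Sigma_0)$ in place of $(\Delta,\Sigma)$ --- and you correctly invoke the remark following the $E(s)$ lemma to ensure $\mathbf{\Theta}(\Delta_0,\Sigma_0)$ also represents curves, so the unboundedness criterion applies on that side too; (c) the congruence $v^{\ast}(T^{\ast}\Delta_0 T)v=(Tv)^{\ast}\Delta_0(Tv)$ makes the two characterizations literally identical. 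It is worth noting how this differs from the route the paper's own scaffolding suggests: the $E(s)$ lemma preceding the statement encodes the correspondence $\theta\mapsto s$ as a M\"obius map restricted to the locus $c\theta+d\neq0$, $cs\neq a$, and a proof built on it would need separate bookkeeping at the pole, at $\theta=\infty$, and at $s=\infty$, checking in each case that the closure operation in (\ref{DefOSita}) is respected. Your projective (pure-tensor) formulation absorbs all of that casework into a single scale-invariant pair of scalar conditions, which is both shorter and less error-prone; the only price is the small amount of linear-algebra groundwork in step (a).
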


\begin{lemma}\label{lemFGeq0}
\cite{Rantzer1996Kalman}
Let $F,G$ be complex matrices of the same size. Then
\[
FG^{\ast}+GF^{\ast}=0
\]
if and only if there exists a matrix $U$ such that $UU^{\ast}=I$ and $F(I+U)=G(I-U)$.
\end{lemma}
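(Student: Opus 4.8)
The plan is to reduce the stated equivalence to a single well-known fact about matrices that share the same Gram matrix, namely that for matrices $M,N$ of the same size one has $MM^{\ast}=NN^{\ast}$ if and only if there exists a square unitary $U$ with $N=MU$. Two short algebraic observations bridge the gap between this fact and the claim. First I would record the identity, obtained by expanding the products,
\[
(F+G)(F+G)^{\ast}-(G-F)(G-F)^{\ast}=2\left(FG^{\ast}+GF^{\ast}\right),
\]
so that the hypothesis $FG^{\ast}+GF^{\ast}=0$ is exactly equivalent to $(F+G)(F+G)^{\ast}=(G-F)(G-F)^{\ast}$. Second, I would rewrite the target identity: since $F(I+U)=G(I-U)$ is the same as $FU+GU=G-F$, i.e. $(F+G)U=G-F$, the matrix $U$ we seek is precisely a unitary factor carrying $M\triangleq F+G$ to $N\triangleq G-F$. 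Note that if $F,G\in\mathbb{C}^{n\times m}$ then $U$ must be square of size $m\times m$, so the condition $UU^{\ast}=I$ coincides with unitarity.

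With these two reductions the proof becomes a chain of equivalences. For the easy direction, if such $U$ exists then
\[
(F+G)(F+G)^{\ast}=(F+G)UU^{\ast}(F+G)^{\ast}=(G-F)(G-F)^{\ast},
\]
and the displayed identity returns $FG^{\ast}+GF^{\ast}=0$. For the converse I would assume $FG^{\ast}+GF^{\ast}=0$, convert it via the identity into $MM^{\ast}=NN^{\ast}$, and then invoke the Gram-matrix fact to produce the desired unitary $U$, which by the rewriting above is equivalent to $F(I+U)=G(I-U)$.

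The only substantive step is establishing that $MM^{\ast}=NN^{\ast}$ forces the existence of a unitary $U$ with $MU=N$, and here the plan is an SVD argument. Let $MM^{\ast}=NN^{\ast}=WDW^{\ast}$ be a spectral decomposition with $W$ unitary and $D\geq0$ diagonal, and set $\Sigma$ to be the rectangular matrix of singular values with $\Sigma\Sigma^{\ast}=D$. The key computation is $(W^{\ast}M)(W^{\ast}M)^{\ast}=W^{\ast}MM^{\ast}W=D$, which shows the rows of $W^{\ast}M$ are orthogonal with row $i$ of norm equal to the $i$-th singular value; normalizing the rows with positive singular value and completing to an orthonormal basis yields a unitary $V_{1}$ with $M=W\Sigma V_{1}^{\ast}$, and the same construction applied to $N$ gives $N=W\Sigma V_{2}^{\ast}$ with the same left factor $W$ and the same $\Sigma$. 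Then $U\triangleq V_{1}V_{2}^{\ast}$ is unitary and satisfies $MU=W\Sigma V_{1}^{\ast}V_{1}V_{2}^{\ast}=W\Sigma V_{2}^{\ast}=N$.

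I expect the main obstacle to be purely a matter of bookkeeping rather than a deep idea: the columns of $W$ and the rows of $W^{\ast}M$ associated with zero singular values must be completed to orthonormal families, and one must check that $\Sigma V_{1}^{\ast}$ reproduces $W^{\ast}M$ on exactly the zero rows as well. Once the construction of $V_{1}$ and $V_{2}$ is carried out carefully for both the nonzero and the vanishing singular values, the equivalences close up and the lemma follows.
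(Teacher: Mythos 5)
Your proof is correct. Note, however, that the paper itself offers no proof of this statement: it is imported verbatim from the cited reference (Rantzer's 1996 KYP paper), so there is no internal argument to compare against. Your route --- rewriting $F(I+U)=G(I-U)$ as $(F+G)U=G-F$, observing that $FG^{\ast}+GF^{\ast}=0$ is equivalent via the expansion $(F+G)(F+G)^{\ast}-(G-F)(G-F)^{\ast}=2(FG^{\ast}+GF^{\ast})$ to the Gram-matrix identity $(F+G)(F+G)^{\ast}=(G-F)(G-F)^{\ast}$, and then invoking the fact that $MM^{\ast}=NN^{\ast}$ forces $N=MU$ for some unitary $U$ --- is precisely the classical argument used in that source, with the Gram-matrix fact established by your SVD construction. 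The only bookkeeping point worth making explicit is that the nonzero eigenvalues of $D$ should be ordered into the leading positions so that the rectangular $\Sigma$ with $\Sigma\Sigma^{\ast}=D$ exists, and that both factorizations $M=W\Sigma V_{1}^{\ast}$ and $N=W\Sigma V_{2}^{\ast}$ share the same $W$ and $\Sigma$ because $MM^{\ast}=NN^{\ast}$; you have flagged both, and they go through exactly as you describe.
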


From lemma \ref{lemFGeq0}, we get the following lemma.
\begin{lemma}\label{lem_fjwg}
Let $f,g\in\mathbb{C}^{n}$ and $g\neq0$. Then
\[
e^{-\text{j}\bm{\varphi}}fg^{\ast}+e^{\text{j}\bm{\varphi}}gf^{\ast}=0\Leftrightarrow f=(\text{j}\omega)^{\nu}g\ for\ some\ \omega\in\mathbb{R}^+
\]
\end{lemma}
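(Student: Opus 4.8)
The plan is to recast the Hermitian identity into the hypothesis of Lemma \ref{lemFGeq0}. Setting $F=f$ and $G=e^{\text{j}\bm{\varphi}}g$, one checks immediately that $FG^{\ast}+GF^{\ast}=e^{-\text{j}\bm{\varphi}}fg^{\ast}+e^{\text{j}\bm{\varphi}}gf^{\ast}$, so the left-hand condition is precisely $FG^{\ast}+GF^{\ast}=0$. Since $f$ and $g$ are column vectors, $F$ and $G$ are $n\times 1$, hence the matrix $U$ supplied by Lemma \ref{lemFGeq0} is a $1\times 1$ scalar with $UU^{\ast}=1$, i.e. $U=e^{\text{j}\theta}$ for some real $\theta$, and Lemma \ref{lemFGeq0} yields $f(1+U)=e^{\text{j}\bm{\varphi}}g(1-U)$.

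Next I would extract the scalar relation between $f$ and $g$. First rule out $U=-1$: if $U=-1$ then $1+U=0$ while $1-U=2$, forcing $2e^{\text{j}\bm{\varphi}}g=0$ and hence $g=0$, contradicting $g\neq 0$; thus $1+U\neq 0$ and one may solve $f=e^{\text{j}\bm{\varphi}}\tfrac{1-U}{1+U}\,g$. Writing $U=e^{\text{j}\theta}$ and factoring out $e^{\text{j}\theta/2}$ gives the standard simplification $\tfrac{1-U}{1+U}=-\text{j}\tan(\theta/2)$. Combining this with $e^{\text{j}\bm{\varphi}}=\text{j}^{\nu}e^{-\text{j}\pi/2}=-\text{j}\,\text{j}^{\nu}$ (recall $\bm{\varphi}=\tfrac{\pi}{2}(\nu-1)$) collapses the coefficient to $e^{\text{j}\bm{\varphi}}(-\text{j}\tan(\theta/2))=\text{j}^{\nu}\bigl(-\tan(\theta/2)\bigr)$. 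Setting $W\triangleq-\tan(\theta/2)\in\mathbb{R}$ then yields $f=W\,\text{j}^{\nu}g$.

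It remains to identify $W\,\text{j}^{\nu}$ with $(\text{j}\omega)^{\nu}=\omega^{\nu}\text{j}^{\nu}$, which is where the real work lies. For any $W\geq 0$ I would set $\omega=W^{1/\nu}\in\mathbb{R}^{+}$ so that $\omega^{\nu}=W$ and $f=(\text{j}\omega)^{\nu}g$. The main obstacle is precisely the sign of $W$: as $\theta$ ranges over $(-\pi,\pi)$, $\tan(\theta/2)$ sweeps all of $\mathbb{R}$, so the derived identity $f=W\,\text{j}^{\nu}g$ a priori admits negative $W$, whereas the target $(\text{j}\omega)^{\nu}$ with $\omega\in\mathbb{R}^{+}$ realizes only $W\geq 0$ (the ray $\{\omega^{\nu}\text{j}^{\nu}:\omega\geq 0\}$ rather than the full line $\mathbb{R}\,\text{j}^{\nu}$). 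I expect to restrict $\theta$ to the branch $(-\pi,0]$, equivalently to invoke the principal-Riemann-surface convention $\omega^{\nu}\in\mathbb{R}^{+}$ used just before Proposition \ref{propCurve}, so that $W\geq 0$ and the parametrization by $\omega\in\mathbb{R}^{+}$ becomes exact; this branch selection is the delicate point that must be argued carefully rather than merely asserted.

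For the converse I would substitute $f=(\text{j}\omega)^{\nu}g=\omega^{\nu}\text{j}^{\nu}g$ directly and compute $e^{-\text{j}\bm{\varphi}}fg^{\ast}+e^{\text{j}\bm{\varphi}}gf^{\ast}=\omega^{\nu}\bigl(e^{-\text{j}\bm{\varphi}}\text{j}^{\nu}+e^{\text{j}\bm{\varphi}}\overline{\text{j}^{\nu}}\bigr)gg^{\ast}$; since $e^{-\text{j}\bm{\varphi}}\text{j}^{\nu}=e^{\text{j}\pi/2}=\text{j}$ and $e^{\text{j}\bm{\varphi}}\overline{\text{j}^{\nu}}=e^{-\text{j}\pi/2}=-\text{j}$, the bracketed factor vanishes and the identity holds, establishing the implication $\Leftarrow$.
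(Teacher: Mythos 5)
Your reduction is exactly the paper's: the paper's entire proof of this lemma is the one line ``Let $F=f$, $G=e^{\text{j}\bm{\varphi}}g$ and $(1-U)/(1+U)=\text{j}\omega^{\nu}$ in Lemma \ref{lemFGeq0}.'' Your expansion of that line is correct and considerably more careful than the original (you rule out $U=-1$, compute $(1-U)/(1+U)=-\text{j}\tan(\theta/2)$, and correctly arrive at $f=W\,\text{j}^{\nu}g$ with $W=-\tan(\theta/2)\in\mathbb{R}$), and your verification of the $\Leftarrow$ direction is fine.

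However, the sign issue you flag is a genuine gap, and the fix you propose cannot work. The scalar $U$ in Lemma \ref{lemFGeq0} is not free to be placed on a chosen branch: if $g\neq0$ and the left-hand side vanishes, then $W$ is uniquely determined by $f$ and $g$ (namely $f=W\,\text{j}^{\nu}g$ forces $U=(1-\text{j}W)/(1+\text{j}W)$), so no ``principal Riemann surface'' convention about $\omega$ can make $W$ nonnegative --- $W$ is data, not a choice. Indeed the forward implication of the lemma as stated is false: take $g\neq0$ and $f=-\text{j}^{\nu}g$. Using $e^{-\text{j}\bm{\varphi}}\text{j}^{\nu}=\text{j}$ and $e^{\text{j}\bm{\varphi}}\overline{\text{j}^{\nu}}=-\text{j}$, one gets $e^{-\text{j}\bm{\varphi}}fg^{\ast}+e^{\text{j}\bm{\varphi}}gf^{\ast}=-\text{j}gg^{\ast}+\text{j}gg^{\ast}=0$, yet $f=(\text{j}\omega)^{\nu}g$ would require $\omega^{\nu}=-1$ with $\omega\in\mathbb{R}^{+}$, which is impossible. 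What the argument (yours and the paper's) actually proves is
\begin{equation*}
e^{-\text{j}\bm{\varphi}}fg^{\ast}+e^{\text{j}\bm{\varphi}}gf^{\ast}=0\ \Leftrightarrow\ f=W\,\text{j}^{\nu}g\ \text{for some } W\in\mathbb{R},
\end{equation*}
which is also the form the paper implicitly relies on later: in Lemma \ref{lemEquivtZeta} the curve $\rho(\theta,\Delta_{0})=0$ is parametrized as $\theta=\text{j}^{\nu}W$ with $W\in\mathbb{R}$, not $W\geq0$. The paper's proof hides the same unjustified step inside the assignment $(1-U)/(1+U)=\text{j}\omega^{\nu}$, which silently presumes the real number $-\tan(\theta/2)$ is nonnegative. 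So: your proof attempt is faithful to the paper's method and is honest about the one step that fails, but that step is not a missing detail --- it is a counterexample-level flaw in the statement, and the lemma should be weakened to the $W\in\mathbb{R}$ version above.
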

\begin{proof}
Let $F=f,G=e^{\text{j}\bm{\varphi}}g$ and $(1-U)/(1+U)=\text{j}\omega^{\nu}$ in Lemma \ref{lemFGeq0}, then we get the desired result.
\end{proof}

\begin{lemma}\label{lemEquivtZeta}
Let $\Delta_{0},\Sigma_{0}$ be defined in (\ref{DefDelt0}), $\mathbf{\Theta}$ in (\ref{DefCurve}) representing curves,
$\overline{\mathbf{\Theta}}$ in (\ref{DefOSita}) and $\Xi_{\theta}$ in
(\ref{DefXitheta}), then the following two conditions are equivalent

i) $\Xi_{s}\zeta=0$ for some $s\in \overline{\mathbf{\Theta}}(\Delta_{0},\Sigma_{0})$;

ii) $\zeta^{\ast}(\Delta_{0}\otimes U+\Sigma_{0}\otimes V)\zeta \geq0$ for all
$U,V\in \mathcal{H}_{n},V\geq0$

\end{lemma}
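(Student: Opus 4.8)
The plan is to partition $\zeta=(f^{\ast},g^{\ast})^{\ast}$ with $f,g\in\mathbb{C}^n$ and expand the quadratic form in (ii) explicitly. Using the block forms of $\Delta_0\otimes U$ and $\Sigma_0\otimes V$, one computes
\[
\zeta^{\ast}(\Delta_0\otimes U+\Sigma_0\otimes V)\zeta=\operatorname{tr}\bigl(U(e^{\text{j}\bm{\varphi}}gf^{\ast}+e^{-\text{j}\bm{\varphi}}fg^{\ast})\bigr)+\alpha f^{\ast}Vf+\beta e^{\text{j}\bm{\varphi}}f^{\ast}Vg+\beta e^{-\text{j}\bm{\varphi}}g^{\ast}Vf+\gamma g^{\ast}Vg.
\]
The decisive observation is that $U$ ranges over all of $\mathcal{H}_n$ with no sign restriction, so condition (ii) can hold only if the coefficient of $U$ vanishes, i.e. the Hermitian matrix $M\triangleq e^{-\text{j}\bm{\varphi}}fg^{\ast}+e^{\text{j}\bm{\varphi}}gf^{\ast}$ satisfies $\operatorname{tr}(UM)=0$ for every $U\in\mathcal{H}_n$, forcing $M=0$. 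I also record the identity $\text{j}^{\nu}=\text{j}\,e^{\text{j}\bm{\varphi}}$ (since $\bm{\varphi}=\frac{\pi}{2}(\nu-1)$), which will convert statements about $(\text{j}\omega)^{\nu}$ into the form $s=\text{j}^{\nu}W$ used in $\Xi_s$.

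For (i) $\Rightarrow$ (ii): if $\Xi_s\zeta=0$ for finite $s\in\overline{\mathbf{\Theta}}(\Delta_0,\Sigma_0)$, then $f=sg$ with $s=\text{j}^{\nu}W=\text{j}e^{\text{j}\bm{\varphi}}W$ for some $W\in\mathbb{R}$ (because $\rho(s,\Delta_0)=0$), and $\alpha W^2+\gamma=\rho(s,\Sigma_0)\geq0$. Substituting $f=\text{j}e^{\text{j}\bm{\varphi}}Wg$ into the expansion, the $U$-term and the two $\beta$-terms cancel identically, leaving $(\alpha W^2+\gamma)\,g^{\ast}Vg\geq0$ for every $V\geq0$. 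If instead $s=\infty$, then $\Xi_\infty\zeta=-g=0$, the expansion collapses to $\alpha f^{\ast}Vf$, and $\alpha\geq0$ holds because $s=\infty\in\overline{\mathbf{\Theta}}$ forces the unboundedness condition $\Sigma_{0,11}=\alpha\geq0$; hence (ii) again holds.

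For the converse (ii) $\Rightarrow$ (i), I would first use $M=0$. When $g\neq0$, Lemma \ref{lem_fjwg} immediately yields $f=(\text{j}\omega)^{\nu}g=\text{j}e^{\text{j}\bm{\varphi}}Wg$ with $W=\omega^{\nu}\geq0$, so $s\triangleq\text{j}^{\nu}W$ satisfies $\rho(s,\Delta_0)=0$ and $\Xi_s\zeta=f-sg=0$. Feeding this $f$ back into (ii) reduces the inequality to $(\alpha W^2+\gamma)g^{\ast}Vg\geq0$; taking $V=gg^{\ast}\geq0$ gives $g^{\ast}Vg=\|g\|^4>0$ and hence $\rho(s,\Sigma_0)=\alpha W^2+\gamma\geq0$, placing $s$ in $\mathbf{\Theta}(\Delta_0,\Sigma_0)$. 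When $g=0$ but $f\neq0$, condition (ii) reads $\alpha f^{\ast}Vf\geq0$; choosing $V=ff^{\ast}$ forces $\alpha\geq0$, so $\mathbf{\Theta}(\Delta_0,\Sigma_0)$ is unbounded, $\infty\in\overline{\mathbf{\Theta}}$, and $\Xi_\infty\zeta=-g=0$. The degenerate case $\zeta=0$ is trivial since $\overline{\mathbf{\Theta}}$ is nonempty. The main obstacle is the first step of this direction — recognizing that the arbitrariness of the Hermitian $U$ is exactly what produces the equation $M=0$ to which Lemma \ref{lem_fjwg} applies; once that reduction is in place, the remaining $V$-part only needs the single rank-one probe $V=gg^{\ast}$ (or $ff^{\ast}$) to extract the sign condition $\rho(s,\Sigma_0)\geq0$.
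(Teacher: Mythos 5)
Your proof is correct and follows essentially the same route as the paper's: the same partition $\zeta=(f^{\ast},g^{\ast})^{\ast}$, the same trace expansion, the same key observation that the arbitrariness of the Hermitian $U$ forces $e^{-\text{j}\bm{\varphi}}fg^{\ast}+e^{\text{j}\bm{\varphi}}gf^{\ast}=0$, and the same use of Lemma \ref{lem_fjwg} plus the unboundedness criterion ($\alpha\geq0$) to recover condition i). If anything, your converse direction is slightly more careful than the paper's — the explicit rank-one probes $V=gg^{\ast}$ and $V=ff^{\ast}$, and the degenerate case $\zeta=0$, are left implicit there — but the underlying argument is identical.
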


\begin{proof}
Define $\zeta=\left[  f^{\ast}\ g^{\ast}\right]  ^{\ast}$. Through some algebraic manipulations, we get
\begin{align}
& \zeta^{\ast}(\Delta_{0}\otimes U+\Sigma_{0}\otimes V)\zeta \notag\\
=& \alpha f^{\ast}Vf+\beta e^{-\text{j}\bm{\varphi}}g^{\ast}Vf+\beta e^{\text{j}\bm{\varphi}}f^{\ast}Vg+
\gamma g^{\ast}Vg+e^{-\text{j}\bm{\varphi}}g^{\ast}Uf+e^{\text{j}\bm{\varphi}}f^{\ast}Ug \notag \\
=& \text{tr}\left[
(\alpha ff^{\ast}+\beta e^{-\text{j}\bm{\varphi}}fg^{\ast}+\beta e^{\text{j}\bm{\varphi}}gf^{\ast}+\gamma gg^{\ast})V
\right]+\text{tr}\left[
(e^{-\text{j}\bm{\varphi}}fg^{\ast}+e^{\text{j}\bm{\varphi}}gf^{\ast})U
\right] \label{equPrfZata1}
\end{align}

Suppose i) holds.

It can be verified
that i) holds if and only if either a) $\overline{\mathbf{\Theta}}(\Delta_{0},\Sigma_{0})$ is bounded
and $f=(\text{j}\omega)^{\nu}g$ holds for some $\omega\in\mathbb{R}^+$ such that $\rho(\theta,\Delta_0)\geq0$ or b)
$\overline{\mathbf{\Theta}}(\Delta_{0},\Sigma_{0})$ is unbounded ($\alpha\geq0$) and
$g=0$.

If $f=(\text{j}\omega)^{\nu}g$, then
\[
\zeta^{\ast}(\Delta_{0}\otimes U+\Sigma_{0}\otimes V)\zeta
=(\alpha \omega^{2\nu}+\gamma)g^{\ast}Vg
\]

Because $\rho(\theta,\Delta_0)\geq0$ and $V\geq0$, we get
$\zeta^{\ast}(\Delta_{0}\otimes U+\Sigma_{0}\otimes V)\zeta \geq0$.

If $\alpha\geq0$ and $g=0$, it's obvious that $\zeta^{\ast}(\Delta_{0}\otimes U+\Sigma_{0}\otimes V)\zeta \geq0$
for all $U,V\in \mathcal{H}_{n},V\geq0$.

Suppose ii) is satisfied. It implies that
\begin{align}
\alpha ff^{\ast}+\beta e^{-\text{j}\bm{\varphi}}fg^{\ast}+\beta e^{\text{j}\bm{\varphi}}gf^{\ast}+\gamma gg^{\ast} \geq0\\
e^{-\text{j}\bm{\varphi}}fg^{\ast}+e^{\text{j}\bm{\varphi}}gf^{\ast}=0 \label{equPrfZata2}
\end{align}
both hold.

According to Lemma \ref{lem_fjwg}, equation (\ref{equPrfZata2}) implies that either $f=(\text{j}\omega)^{\nu}g$, $g\neq0$
 or $g=0$ holds. $f=(\text{j}\omega)^{\nu}g$ can further derive i) when $\mathbf{\overline{\Theta}(\Delta_0,\Sigma_0)}$ is bounded.
 $g=0$ can further derive i) when $\mathbf{\overline{\Theta}(\Delta_0,\Sigma_0)}$ is unbounded. This ends the proof.
\end{proof}

\begin{lemma}\label{lemRankoneF}
Let $N\in%
\mathbb{C}
^{2n\times(n+m)}$ and $\Delta,\Sigma \in \mathcal{H}_{2}$ be given such that
$\mathbf{\Theta}$ in (\ref{DefCurve}) represents curves. Define
$\overline{\mathbf{\Theta}}$ and $\Xi_{\theta}$ by (\ref{DefOSita}%
) and (\ref{DefXitheta}), respectively. Then, the set $\mathcal{G}_{1}$ defined in
(\ref{DefGMsita}) can be characterized by (\ref{DefFG}) with%
\begin{equation}
\mathcal{F}\triangleq \left\{
N^{\ast}(\Delta \otimes U+\Sigma \otimes V)N:U,V\in\mathcal{H}_n,V\geq0
\right\}
\label{DefFofGKYP}%
\end{equation}

\end{lemma}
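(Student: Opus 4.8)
The plan is to characterize the set $\mathcal{G}_1$ in (\ref{DefGMsita}) by showing it coincides with the set $\mathcal{G}(\mathcal{F})$ defined in (\ref{DefFG}) for the particular choice of $\mathcal{F}$ in (\ref{DefFofGKYP}). Recall from (\ref{DefFG}) that $\mathcal{G}(\mathcal{F})=\{G\in\mathcal{H}_{n+m}:G\neq0,\ G\geq0,\ \mathrm{tr}(FG)\geq0\ \forall F\in\mathcal{F}\}$, so the goal is to prove that a nonzero positive-semidefinite matrix $G$ satisfies $\mathrm{tr}(FG)\geq0$ for every $F=N^{\ast}(\Delta\otimes U+\Sigma\otimes V)N$ with $V\geq0$ if and only if $G=\eta\eta^{\ast}$ with $\eta\in\mathcal{M}_{\theta}$ for some $\theta\in\overline{\mathbf{\Theta}}$. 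My strategy is to first reduce the problem to the rank-one generators and then apply the chain of equivalences already assembled in Lemmas \ref{lemEquivtPsi}, \ref{lem_fjwg} and \ref{lemEquivtZeta}.

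First I would use the congruence transformation of Lemma \ref{LemTrans}: since $\mathbf{\Theta}$ represents curves we have $\det(\Delta)<0$, so write $\Delta=T^{\ast}\Delta_0 T$ and $\Sigma=T^{\ast}\Sigma_0 T$ as in (\ref{equDelSig})--(\ref{DefDelt0}). Substituting into $\mathcal{F}$ and using the Kronecker identity $(\Delta\otimes U)=(T^{\ast}\otimes I)(\Delta_0\otimes U)(T\otimes I)$ (and likewise for $\Sigma$), I can absorb $T\otimes I$ into $N$ and reduce to the canonical pair $(\Delta_0,\Sigma_0)$. This is exactly what Lemma \ref{lemEquivtPsi} is designed to exploit, as it equates the condition $\Xi_{\theta}\psi=0$ for $\theta\in\overline{\mathbf{\Theta}}(\Delta,\Sigma)$ with $\Xi_s(T\otimes I)\psi=0$ for $s\in\overline{\mathbf{\Theta}}(\Delta_0,\Sigma_0)$, so the membership $\eta\in\mathcal{M}_{\theta}$ translates cleanly to the canonical coordinates.

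Next I would establish the rank-one case, which is the crux. For a rank-one $G=\eta\eta^{\ast}$ with $\zeta\triangleq N\eta$ written as $\zeta=[f^{\ast}\ g^{\ast}]^{\ast}$, the defining trace condition $\mathrm{tr}(FG)\geq0$ for all admissible $U,V$ becomes, by the cyclic property of the trace, precisely the quadratic-form condition $\zeta^{\ast}(\Delta_0\otimes U+\Sigma_0\otimes V)\zeta\geq0$ for all $U,V\in\mathcal{H}_n$, $V\geq0$. This is verbatim condition ii) of Lemma \ref{lemEquivtZeta}, whose equivalence with condition i), namely $\Xi_s\zeta=0$ for some $s\in\overline{\mathbf{\Theta}}(\Delta_0,\Sigma_0)$, I may invoke directly. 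Translating back through Lemma \ref{lemEquivtPsi} then gives $\eta\in\mathcal{M}_{\theta}$ for some $\theta\in\overline{\mathbf{\Theta}}$, establishing that the rank-one elements of $\mathcal{G}(\mathcal{F})$ are exactly the generators in (\ref{DefGMsita}).

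The main obstacle is passing from the rank-one characterization to general $G\in\mathcal{G}(\mathcal{F})$, because the free sign of $U\in\mathcal{H}_n$ (with only $V\geq0$ constrained) forces a strong constraint. For arbitrary positive-semidefinite $G$, the requirement $\mathrm{tr}(N^{\ast}(\Delta_0\otimes U)N\,G)\geq0$ must hold for $U$ of \emph{both} signs, which forces the corresponding linear-in-$U$ term to vanish identically; this is the mechanism behind the equality (\ref{equPrfZata2}) in Lemma \ref{lemEquivtZeta} and it is what collapses the admissible $G$ onto the curve-supported rank-one cone. I would handle this by expanding $\mathrm{tr}(FG)$ into its $U$-linear and $V$-linear parts exactly as in (\ref{equPrfZata1}), concluding that the $U$-part must be zero and the $V$-part nonnegative, and then arguing that any $G\geq0$ satisfying these constraints is a nonnegative combination of such rank-one generators. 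The cleanest route is to appeal to Lemma \ref{lemRankone} together with the rank-one separability examples $\mathcal{F}_X,\mathcal{F}_{XY}$ in (\ref{DefFx})--(\ref{DefFxy}): since $\mathcal{F}$ has the form $N^{\ast}(\text{rank-one separable set})N$ plus a positive-semidefinite part, $\mathcal{G}=co(\mathcal{G}_1)$, so it suffices to identify the extreme rays, which are exactly the rank-one generators handled above. This reduces the general case to the rank-one case and completes the characterization.
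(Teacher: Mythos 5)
Your middle two paragraphs reproduce the paper's proof exactly: reduce to the canonical pair $(\Delta_0,\Sigma_0)$ via Lemma \ref{LemTrans} and the Kronecker identity, translate the kernel condition $\Xi_{\theta}N\eta=0$ through Lemma \ref{lemEquivtPsi}, and convert the trace condition on a rank-one $\eta\eta^{\ast}$ into the quadratic-form condition of Lemma \ref{lemEquivtZeta}. Had you stated the goal as equality of the two \emph{rank-one} sets, i.e.\ that $\mathcal{G}_1$ of (\ref{DefGMsita}) equals $\mathcal{G}_1(\mathcal{F})$ of (\ref{DefFG}), that chain alone would be a complete proof, and it is precisely the paper's.

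The genuine error is in your framing, and it makes your last paragraph unsalvageable. The lemma asserts only that (\ref{DefGMsita}) coincides with the rank-one members $\mathcal{G}_1(\mathcal{F})$ of (\ref{DefFG}); it does not assert $\mathcal{G}_1=\mathcal{G}(\mathcal{F})$, which is what your first paragraph announces and your last paragraph tries to force. That stronger identity is false: whenever (\ref{DefGMsita}) contains two generators $\eta_1\eta_1^{\ast}$ and $\eta_2\eta_2^{\ast}$ with $\eta_1,\eta_2$ linearly independent (the generic situation), the matrix $G=\eta_1\eta_1^{\ast}+\eta_2\eta_2^{\ast}$ is nonzero, positive semidefinite, and satisfies $\mathrm{tr}(FG)\geq0$ for every $F\in\mathcal{F}$ by linearity, so $G\in\mathcal{G}(\mathcal{F})$, yet it has rank two and cannot equal any $\eta\eta^{\ast}$ with $\eta\in\mathcal{M}_{\theta}$. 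So the ``main obstacle'' you identify -- passing from rank-one $G$ to general $G\in\mathcal{G}(\mathcal{F})$ -- is not an obstacle of this lemma; it is an impossibility, and no refinement of the vanishing-$U$-part argument can close it. What you are actually reaching for there is rank-one \emph{separability}, $\mathcal{G}=co(\mathcal{G}_1)$, which is a different and weaker statement: it is the content of the next result in the paper, Lemma \ref{lemRkAdm}, it is proved there via the explicit factorizations $\mathcal{F}=W^{\ast}\mathcal{F}_{XY}W$ (case $\alpha<0<\gamma$) and $\mathcal{F}=K^{\ast}\mathcal{F}_{X}K+\mathcal{P}$ (case $0\leq\alpha\leq\gamma$) before Lemma \ref{lemRankone} can be invoked, and it enters only later, to make the $S$-procedure in Theorem \ref{thmKYPforFOS} lossless. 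Your appeal to Lemma \ref{lemRankone} skips those constructions, and even granted, convex-hull equality is not the set equality you promised. The fix is simple: delete the identification of $\mathcal{G}_1$ with $\mathcal{G}(\mathcal{F})$ and the entire final paragraph, restate the goal as $\mathcal{G}_1=\mathcal{G}_1(\mathcal{F})$, and keep your middle argument unchanged.
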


\begin{proof}
Let $\mathcal{G}_{2}$ be defined to be $\mathcal{G}_{1}$ in (\ref{DefGMsita}) with
(\ref{DefFofGKYP}) and $N_{0}\triangleq(T\otimes I)N$. Then, for a nonzero
vector $\eta$
\begin{align*}
& \eta \eta^{\ast}\in \mathcal{G}_{1}\\
\Leftrightarrow & \Xi_{\theta}N\eta=0\text{ for
some }\theta\in \overline{\mathbf{\Theta}}(\Delta,\Sigma)\\
\Leftrightarrow & \Xi_{s}N_{0}\eta=0\text{ for some }s\in \overline
{\mathbf{\Theta}}(\Delta_{0},\Sigma_{0})\\
\Leftrightarrow & \eta^{\ast}N_{0}^{\ast}(\Delta_{0}\otimes U+\Sigma_{0}\otimes
V)N_{0}\eta \geq0\\
           & \text{for all }U,V\in\mathcal{H}_n,V\geq0\\
\Leftrightarrow & \eta \eta^{\ast}\in \mathcal{G}_{2}
\end{align*}
where the first and fourth equivalences can easily be gotten from the
definitions, and the second equivalence holds due to Lemma \ref{lemEquivtPsi}, and the
third equivalence holds due to the Lemma \ref{lemEquivtZeta}, respectively.
\end{proof}

Now we can get the rank-one separable set $\mathcal{F}$.
\begin{lemma}\label{lemRkAdm}
Let $N\in%
\mathbb{C}
^{2n\times(n+m)}$ and $\Delta,\Sigma \in \mathcal{H}_{2}$ be given such that
$\mathbf{\Theta}$ in (\ref{DefCurve}) represents curves.
Define $\overline{\mathbf{\Theta}}$ by
(\ref{DefOSita}), the set $\mathcal{F}$ by
(\ref{DefFofGKYP}) and the matrix $\Xi_{\theta}$ by (\ref{DefXitheta}). Then the set $\mathcal{F}$ is admissible and rank-one separable.
\end{lemma}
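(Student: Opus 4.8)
The plan is to prove the two required properties separately, reducing $\mathcal{F}$ by congruence to a canonical pencil in which the building blocks $\mathcal{F}_{X}$ of (\ref{DefFx}) and $\mathcal{F}_{XY}$ of (\ref{DefFxy}) become visible, and then transferring rank-one separability through Lemma \ref{lemRankone}. First I would reduce $\mathcal{F}$ to canonical form. Since $\mathbf{\Theta}$ represents curves, Proposition \ref{propCurve} gives $\det(\Delta)<0$, so Lemma \ref{LemTrans} provides a nonsingular $T$ with $\Delta=T^{\ast}\Delta_{0}T$ and $\Sigma=T^{\ast}\Sigma_{0}T$. Using the mixed-product rule $(T^{\ast}\Delta_{0}T)\otimes U=(T^{\ast}\otimes I)(\Delta_{0}\otimes U)(T\otimes I)$ (and likewise for $\Sigma$), followed by the diagonal congruence $P\triangleq\mathrm{diag}(1,e^{-\text{j}\bm{\varphi}})$, which a short computation shows satisfies $P^{\ast}\Delta_{0}P=\left[\begin{smallmatrix}0&1\\1&0\end{smallmatrix}\right]$ and $P^{\ast}\Sigma_{0}P=\left[\begin{smallmatrix}\alpha&\beta\\\beta&\gamma\end{smallmatrix}\right]$, I collect the congruences into $\widehat{N}\triangleq(P\otimes I)^{-1}(T\otimes I)N$ and write every element of $\mathcal{F}$ as $\widehat{N}^{\ast}(\widetilde{\Delta}\otimes U+\widetilde{\Sigma}\otimes V)\widehat{N}$, where $\widetilde{\Delta},\widetilde{\Sigma}$ are the two real matrices above. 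Expanding the blocks and substituting $X\triangleq U+\beta V$ (which sweeps all of $\mathcal{H}_{n}$ as $U$ does, with $V$ fixed), the inner pencil reduces to $\widetilde{\mathcal{F}}_{0}=\{\left[\begin{smallmatrix}\alpha V&X\\X&\gamma V\end{smallmatrix}\right]:X\in\mathcal{H}_{n},\,V\geq0\}$.

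For rank-one separability I would split along the two cases of Proposition \ref{propCurve}. When $0\leq\alpha\leq\gamma$ I decompose $\left[\begin{smallmatrix}\alpha V&X\\X&\gamma V\end{smallmatrix}\right]=\left[\begin{smallmatrix}0&X\\X&0\end{smallmatrix}\right]+\left[\begin{smallmatrix}\alpha V&0\\0&\gamma V\end{smallmatrix}\right]$, whose first term ranges over $\mathcal{F}_{X}$ and whose second ranges over a set $\mathcal{P}$ of positive-semidefinite matrices containing the origin (here $\alpha,\gamma\geq0$ is exactly what makes the block-diagonal term positive semidefinite); thus $\widetilde{\mathcal{F}}_{0}=\mathcal{F}_{X}+\mathcal{P}$ is rank-one separable by Lemma \ref{lemRankone}. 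When $\alpha<0<\gamma$ the scalar congruence $R\triangleq\mathrm{diag}(I,\sqrt{\gamma/(-\alpha)}\,I)$ maps $\mathcal{F}_{XY}$ exactly onto $\widetilde{\mathcal{F}}_{0}$ (matching $-Y=\alpha V$ in the first block and $Y=\gamma V$ in the last, consistently, since $-\alpha,\gamma>0$), so $\widetilde{\mathcal{F}}_{0}=R^{\ast}\mathcal{F}_{XY}R$ is rank-one separable, again by Lemma \ref{lemRankone}. In both cases pulling $\widetilde{\mathcal{F}}_{0}$ back through $\widehat{N}$—absorbing $\widehat{N}^{\ast}\mathcal{P}\widehat{N}$, still positive semidefinite and containing the origin, or composing into $R\widehat{N}$—preserves rank-one separability via a final application of Lemma \ref{lemRankone}, establishing it for $\mathcal{F}$.

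For admissibility, nonemptiness ($U=V=0$) and the convex-cone property are immediate from the linear dependence on $(U,V)$ together with the cone $\{V\geq0\}$ being closed under sums and nonnegative scalings. The substantive point is $int(\mathcal{J})\cap\mathcal{F}=\emptyset$, i.e.\ that no $F\in\mathcal{F}$ is negative definite. Here I would exhibit a single frequency vector that is nonnegative against all of $\mathcal{F}$: since $\overline{\mathbf{\Theta}}$ is nonempty and $\Xi_{\theta}N=[\,A-\theta I_{n}\ \ B\,]$ (respectively $[\,-I_{n}\ \ 0\,]$ for $\theta=\infty$) always has nontrivial kernel, I can pick a nonzero $\eta$ with $\Xi_{\theta}N\eta=0$; running the equivalence chain of Lemma \ref{lemRankoneF} backward (through Lemma \ref{lemEquivtPsi} and Lemma \ref{lemEquivtZeta}) then yields $\eta^{\ast}F\eta=\eta^{\ast}N^{\ast}(\Delta\otimes U+\Sigma\otimes V)N\eta\geq0$ for every admissible $U,V$, hence for every $F\in\mathcal{F}$, so $F$ cannot be negative definite.

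I expect the main obstacle to be twofold. Conceptually it is the rank-one separability, whose entire purpose is to make the fractional pencil $\Delta_{0}\otimes U+\Sigma_{0}\otimes V$ collapse onto $\mathcal{F}_{X}$ or $\mathcal{F}_{XY}$; the delicate bookkeeping is keeping the two Kronecker congruences $T\otimes I$ and $P\otimes I$ straight and verifying that the dichotomy $0\leq\alpha\leq\gamma$ versus $\alpha<0<\gamma$ is exhaustive (which is precisely what Proposition \ref{propCurve} guarantees). The one genuinely analytic gap is closedness of $\mathcal{F}$: it holds for the canonical $\widetilde{\mathcal{F}}_{0}$ by a direct limiting argument (recovering $V$ from a nonzero diagonal block), but the image under the possibly non-square $\widehat{N}$ is where care is needed, since linear images of closed cones are not closed in general; I would close this gap using the explicit structure of $N=\left[\begin{smallmatrix}A&B\\I&0\end{smallmatrix}\right]$ rather than an abstract image-of-a-cone argument.
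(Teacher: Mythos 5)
Your proposal is correct and takes essentially the same route as the paper: after the congruence from Lemma \ref{LemTrans}, the paper splits into the same two cases supplied by Proposition \ref{propCurve}, writing $\mathcal{F}$ as $K^{\ast}\mathcal{F}_{X}K+\mathcal{P}$ when $0\leq\alpha\leq\gamma$ and as $W^{\ast}\mathcal{F}_{XY}W$ when $\alpha<0<\gamma$ (your two-step phase-then-scaling congruence is the paper's single combined congruence, up to a unimodular scalar), and then invokes Lemma \ref{lemRankone} exactly as you do. Your admissibility argument simply spells out what the paper disposes of by citing Iwasaki's Lemma 11, and the closedness issue you flag is merely asserted as ``clear'' in the paper, so if anything you are more careful on that point.
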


\begin{proof}
Clearly, $\mathcal{F}$ is a closed convex cone. When $F\in\mathcal{F}>0$, the set
$\mathcal{G}_{1}$ is nonempty and hence $\mathcal{F}$ is admissible (\cite{Iwasaki2005Generalized}, Lemma
11). From Lemma \ref{LemTrans}, we get
\begin{equation*}
\Delta \otimes U+\Sigma \otimes V=(T\otimes I)^{\ast}\left[
\begin{array}
[c]{cc}%
\alpha V & Ue^{\text{j}\bm{\varphi}}+\beta e^{\text{j}\bm{\varphi}}V\\
Ue^{-\text{j}\bm{\varphi}}+\beta e^{-\text{j}\bm{\varphi}}V & \gamma V
\end{array}
\right]  (T\otimes I)
\end{equation*}
where $\alpha \leq \gamma$ and $\gamma \geq0$ according to Proposition \ref{propCurve}.

When $\alpha<0<\gamma$, define
\[
W\triangleq \left[
\begin{array}
[c]{cc}%
\sqrt{-\alpha}Ie^{-\text{j}\bm{\varphi}/2} & 0\\
0 & \sqrt{\gamma}Ie^{\text{j}\bm{\varphi}/2}%
\end{array}
\right]  (T\otimes I)N
\]%
\[
X\triangleq \frac{(U+\beta V)}{\sqrt{-\alpha \gamma}},Y\triangleq V
\]

Then, the set $\mathcal{F}$ can be characterized as $\mathcal{F=}W^{\ast
}\mathcal{F}_{XY}W$ with $\mathcal{F}_{XY}$ defined in (\ref{DefFxy}).

When $\gamma \geq \alpha \geq0$, define
\[
K\triangleq \left[
\begin{array}
[c]{cc}%
e^{-\text{j}\bm{\varphi}/2} & 0\\
0 & e^{\text{j}\bm{\varphi}/2}%
\end{array}
\right]  (T\otimes I)N
\]%
\[
X\triangleq U+\beta V,P\triangleq((T\otimes I)N)^{\ast}\left[
\begin{array}
[c]{cc}%
\alpha V & 0\\
0 & \gamma V
\end{array}
\right]  (T\otimes I)N
\]

Then, we get $\mathcal{F=}K^{\ast}\mathcal{F}_{X}K+\mathcal{P}$ with
$\mathcal{F}_{X}$ defined in (\ref{DefFx}), and the set $\mathcal{P\triangleq
}\left \{  P\right \}  $ is obviously a subset of positive-semidefinite matrices
containing the origin.

Since $\mathcal{F}_{X}$ and $\mathcal{F}_{XY}$ are rank-one separable, it can be
verified that $\mathcal{F}$ is rank-one separable according to Lemma \ref{lemRankone}.
\end{proof}

Now, we are ready to state and prove the theorem for finite frequency FOS.

\begin{theorem}\label{thmKYPforFOS}
Let matrices $\Pi \in \mathcal{H}_{n+m}$, $N\in%
\mathbb{C}
^{2n\times(n+m)}$, and $\Delta,\Sigma \in \mathcal{H}_{2}$ be given and
$\mathbf{\Theta}$ and $\overline{\mathbf{\Theta}}$ is
defined by (\ref{DefCurve}) and (\ref{DefOSita}), respectively.
Suppose $\mathbf{\Theta}$ represents curves on the right half complex plane and
$\overline{\mathbf{\Theta}}$ represents $\mathbf{\Theta}\cup \{ \infty \}$.
$\Xi_{\theta}$ is defined in (\ref{DefXitheta}) and $S_{\theta}$ is defined as
$S_{\theta}\triangleq(\Xi_{\theta}N)_{\perp}$. The following statements are equivalent

i) $S_{\theta}^{\ast}\Pi S_{\theta}<0,$ $\forall \theta\in \overline
{\mathbf{\Theta}}(\Delta,\Sigma).$

ii) There exist $U,V\in \mathcal{H}_{n}$ such that $V>0$ and%
\begin{equation}
N^{\ast}(\Delta \otimes U+\Sigma \otimes V)N+\Pi<0
\label{ineqThmGKYP}
\end{equation}

\end{theorem}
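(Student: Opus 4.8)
The plan is to reduce the theorem to the lossless strict $S$-procedure of Lemma \ref{LemSProc} applied to the set $\mathcal{F}$ in (\ref{DefFofGKYP}), exploiting the two structural facts already established: that $\mathcal{G}_1$ is represented by this $\mathcal{F}$ (Lemma \ref{lemRankoneF}) and that $\mathcal{F}$ is admissible and rank-one separable (Lemma \ref{lemRkAdm}). First I would recast statement i) in trace form. Since $S_\theta=(\Xi_\theta N)_{\perp}$, the columns of $S_\theta$ span the null space of $\Xi_\theta N$, so $S_\theta^{\ast}\Pi S_\theta<0$ holds exactly when $\eta^{\ast}\Pi\eta<0$ for every nonzero $\eta$ with $\Xi_\theta N\eta=0$, i.e. for every $\eta\in\mathcal{M}_\theta$. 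Ranging over all $\theta\in\overline{\mathbf{\Theta}}(\Delta,\Sigma)$ and writing $\eta^{\ast}\Pi\eta=\mathrm{tr}(\Pi\,\eta\eta^{\ast})$, statement i) becomes $\mathrm{tr}(\Pi G)<0$ for all $G\in\mathcal{G}_1$, that is, $tr(\Pi\mathcal{G}_1)<0$ in the notation of (\ref{EqStrictSP}). The case $\theta=\infty$ is handled by the second branch of (\ref{DefXitheta}) and must be included because $\overline{\mathbf{\Theta}}$ adjoins $\{\infty\}$ under the unboundedness hypothesis on $\mathbf{\Theta}$.

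With i) in this form, the core step is the appeal to the $S$-procedure. By Lemma \ref{lemRkAdm} the set $\mathcal{F}$ of (\ref{DefFofGKYP}) is admissible and rank-one separable, so Lemma \ref{LemSProc} guarantees that the strict $S$-procedure is lossless for this $\mathcal{F}$ and the given $\Pi$; combined with Lemma \ref{lemRankoneF}, which identifies $\mathcal{G}_1$ as the rank-one part of the cone generated by $\mathcal{F}$, the equivalence (\ref{EqStrictSP}) yields $tr(\Pi\mathcal{G}_1)<0\Leftrightarrow(\Pi+\mathcal{F})\cap int(\mathcal{J})\neq\emptyset$. Unpacking the right-hand side, and recalling that $int(\mathcal{J})$ is the set of negative-definite Hermitian matrices while a typical element of $\mathcal{F}$ has the form $N^{\ast}(\Delta\otimes U+\Sigma\otimes V)N$ with $U,V\in\mathcal{H}_n$ and $V\geq0$, this asserts precisely that there exist $U,V$ with $V\geq0$ satisfying $N^{\ast}(\Delta\otimes U+\Sigma\otimes V)N+\Pi<0$.

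The remaining gap between this and statement ii) is only the strictness $V>0$ in place of $V\geq0$, and I expect this to be the one genuinely analytic point of the proof; I would close it by a continuity argument using that (\ref{ineqThmGKYP}) is an open condition. Given $U,V$ with $V\geq0$ and $N^{\ast}(\Delta\otimes U+\Sigma\otimes V)N+\Pi<0$, replace $V$ by $V+\varepsilon I$: the left-hand side changes by the fixed Hermitian perturbation $\varepsilon\,N^{\ast}(\Sigma\otimes I)N$, so for $\varepsilon>0$ small enough the strict inequality persists while $V+\varepsilon I>0$, giving ii). The converse ii)$\Rightarrow$i) is immediate, since $V>0$ furnishes a feasible point with $V\geq0$, whence $(\Pi+\mathcal{F})\cap int(\mathcal{J})\neq\emptyset$ and the same lossless equivalence read backward returns $tr(\Pi\mathcal{G}_1)<0$, hence i). All the substance concerning the fractional exponent $\text{j}^{\nu}$ and rank-one separability has already been absorbed into Lemmas \ref{lemEquivtZeta}, \ref{lemRankoneF} and \ref{lemRkAdm}, so the theorem itself is chiefly an assembly of these with the $S$-procedure; the two places demanding care are the correct treatment of $\theta=\infty$ in the trace reformulation and the $V\geq0\to V>0$ upgrade just described.
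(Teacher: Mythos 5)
Your proposal follows essentially the same route as the paper's own proof: recast i) as $\mathrm{tr}(\Pi\mathcal{G}_1)<0$, characterize $\mathcal{G}_1$ via the set $\mathcal{F}$ of (\ref{DefFofGKYP}) by Lemma \ref{lemRankoneF}, invoke admissibility and rank-one separability from Lemma \ref{lemRkAdm}, and conclude by the lossless strict $S$-procedure of Lemma \ref{LemSProc}. The only difference is that you make the $V\geq0$ to $V>0$ upgrade explicit with an $\varepsilon$-perturbation of $V$, where the paper merely asserts it can be done without loss of generality because the inequality (\ref{ineqThmGKYP}) is strict; both are correct.
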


\begin{proof}
Note that i) holds if and only if $\text{tr}(\Pi \mathcal{G}_1)<0$ holds where $\mathcal{G}_1$ is defined in (\ref{DefGMsita}).
The set $\mathcal{G}_1$ can be characterized by (\ref{DefFG}) with $\mathcal{F}$ in (\ref{DefFofGKYP}) according to Lemma \ref{lemRankoneF}.
By Lemma \ref{lemRkAdm}, the set $\mathcal{F}$ is admissible and rank-one separable, which means i) is equivalent to
$\mathcal{F}+\Pi<0$ according to Lemma \ref{LemSProc}, i.e. ii) holds. Because the inequality in (\ref{ineqThmGKYP}) is strict, the existence
of $V$ can be chosen as $V>0$ without loss of generality.
\end{proof}

\subsection{$L_{\infty}$ with Different Frequency Range}

The following gives the result of $L_{\infty}$ with finite frequency.

\begin{definition}
For a matrix function $T(s)$, the $L_{\infty}$ norm of $T(s)$ is defined as
\begin{equation}
\left \Vert T(s) \right \Vert_{L_{\infty}}\triangleq\underset{\omega\in\mathbb{R}}{\sup}\sigma_{\max}(T(\text{j}\omega))
\end{equation}
where $\sigma_{\max}$ is the maximum singular value.
\end{definition}

\begin{lemma}
\cite{Liang2015Bounded}
For a matrix function $T(s)$, there holds
\begin{equation}
\left \Vert T(s) \right \Vert_{L_{\infty}}=\underset{\omega\geq0}{\sup}\sigma_{\max}(T(\text{j}\omega))
\end{equation}
\end{lemma}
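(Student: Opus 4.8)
The plan is to exploit the conjugate symmetry of the real-coefficient FOS transfer function to show that the singular-value profile over the negative frequencies $\omega<0$ mirrors the one over $\omega\geq0$, so dropping the negative half leaves the supremum unchanged. Since the definition splits the real line, it suffices to prove $\sigma_{\max}(T(-\text{j}\omega))=\sigma_{\max}(T(\text{j}\omega))$ for every $\omega\geq0$, because then
\[
\left\Vert T(s)\right\Vert_{L_{\infty}}=\underset{\omega\in\mathbb{R}}{\sup}\,\sigma_{\max}(T(\text{j}\omega))=\max\!\left\{\,\underset{\omega\geq0}{\sup}\,\sigma_{\max}(T(\text{j}\omega)),\ \underset{\omega<0}{\sup}\,\sigma_{\max}(T(\text{j}\omega))\right\}=\underset{\omega\geq0}{\sup}\,\sigma_{\max}(T(\text{j}\omega)).
\]

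First I would establish the conjugation identity $T(-\text{j}\omega)=\overline{T(\text{j}\omega)}$. For $G(s)=C(s^{\nu}I-A)^{-1}B+D$ with real $A,B,C,D$, taking entrywise complex conjugates and pulling them through the real matrices gives $\overline{G(\text{j}\omega)}=C\big(\overline{(\text{j}\omega)^{\nu}}\,I-A\big)^{-1}B+D$. The essential fractional-order step is the branch identity $\overline{(\text{j}\omega)^{\nu}}=(-\text{j}\omega)^{\nu}$: writing $\text{j}\omega=\omega e^{\text{j}\pi/2}$ on the principal Riemann surface yields $(\text{j}\omega)^{\nu}=\omega^{\nu}e^{\text{j}\nu\pi/2}$, whose conjugate $\omega^{\nu}e^{-\text{j}\nu\pi/2}$ equals $(-\text{j}\omega)^{\nu}=\omega^{\nu}e^{-\text{j}\nu\pi/2}$. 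Hence $\overline{G(\text{j}\omega)}=G(-\text{j}\omega)$.

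Second I would invoke the fact that entrywise conjugation preserves singular values: for any matrix $M$ one checks $(\overline{M})^{\ast}\overline{M}=\overline{M^{\ast}M}$, which is Hermitian with the same real eigenvalues as $M^{\ast}M$, so $\sigma_{\max}(\overline{M})=\sigma_{\max}(M)$. Applying this with $M=T(\text{j}\omega)$ and combining with the first step gives $\sigma_{\max}(T(-\text{j}\omega))=\sigma_{\max}(\overline{T(\text{j}\omega)})=\sigma_{\max}(T(\text{j}\omega))$, so the two suprema agree and the claim follows.

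The main obstacle is precisely the fractional branch identity $\overline{(\text{j}\omega)^{\nu}}=(-\text{j}\omega)^{\nu}$. Unlike the integer-order case, $s^{\nu}$ is multivalued, so this equality is not automatic and must be justified on the principal Riemann surface fixed earlier in the excerpt (where $\omega\geq0$ is chosen so that $\omega^{\nu}\in\mathbb{R}^{+}$). Everything else reduces to routine linear algebra, but this branch-consistency step is what legitimizes the symmetry argument in the fractional setting rather than borrowing it from integer-order theory.
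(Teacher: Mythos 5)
The paper never proves this lemma: it is imported verbatim from \cite{Liang2015Bounded}, so there is no internal proof to compare your argument against. Your proof is correct and is exactly the standard argument behind the cited result: for the FOS transfer function with real $A,B,C,D$, the principal-branch identity $\overline{(\text{j}\omega)^{\nu}}=(-\text{j}\omega)^{\nu}$ yields $G(-\text{j}\omega)=\overline{G(\text{j}\omega)}$, and entrywise conjugation preserves singular values, so the supremum over $\omega<0$ duplicates the one over $\omega>0$. Two remarks strengthen what you wrote. First, your scope restriction is not cosmetic: as literally stated, ``for a matrix function $T(s)$'' the claim is false (e.g.\ the scalar $T(s)=(s+\text{j})^{-1}$ peaks at $\omega=-1$, so the two suprema differ), hence realness of the coefficient matrices, which your proof uses when pulling the conjugation through $C(\cdot)^{-1}B+D$, is essential, and the lemma should be read as applying to the transfer functions of the form (\ref{equTransFOS}). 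Second, you are right that the only genuinely fractional step is the branch consistency $\overline{s^{\nu}}=(\overline{s})^{\nu}$ on the principal Riemann surface; this is where the paper's earlier stipulation that frequencies live on the principal branch does real work, since on any other branch the conjugate-symmetry of $G$ fails and the reduction to $\omega\geq0$ with it.
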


\begin{theorem}[LMI for FOS of Low Frequency]\label{LMIforLowF}
 Consider FOS with its transfer function $G(s)$ in (\ref{equTransFOS}).
Given a prescribed $L_{\infty}$ performance bound $\delta>0$, then
$\left \Vert G(s)\right \Vert _{L_{\infty}}=\underset{\omega}{\sup}%
\sigma_{\max}(G(\text{j}\omega)<\delta$, $\omega$ belong to the principal Riemann surface and
$\omega \in \Omega_{L}\triangleq \{
\omega \in%
\mathbb{R}^+
:\omega \leq \omega_{L}\}$, holds if and
only if there exist $U,V\in \mathcal{H}_{n},V>0$, such that
\begin{equation}
\left[
\begin{array}
[c]{ccc}
Sym(X)-A^TVA+\omega_{L}^{2\nu}V & Y^{\ast} & C^T\\
Y & -\delta I-B^TVB & D^T\\
C & D & -\delta I
\end{array}
\right]  <0 \label{EqLFFOS}
\end{equation}
where $X\triangleq e^{\text{j}\bm{\varphi}}A^TU$, $Y\triangleq-B^TVA+e^{\text{j}\bm{\varphi}}B^TU$,
 $\sigma_{\max}$ is the maximum singular value.
\end{theorem}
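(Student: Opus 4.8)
The plan is to recast the low-frequency $L_{\infty}$ bound as a finite-frequency FDI of the form (\ref{EqLDI}) and then invoke the FGKYP lemma, Theorem \ref{thmKYPforFOS}. First I would apply the preceding lemma to replace $\sup_{\omega\in\mathbb{R}}$ by $\sup_{\omega\geq0}$, so that it suffices to enforce $\sigma_{\max}(G(\text{j}\omega))<\delta$ for every $\omega\in\Omega_{L}$. Since $\sigma_{\max}(G(\text{j}\omega))<\delta$ is equivalent to $G(\text{j}\omega)^{\ast}G(\text{j}\omega)<\delta^{2}I$, hence to $\delta^{-1}G(\text{j}\omega)^{\ast}G(\text{j}\omega)-\delta I<0$, I would choose the performance matrix
\[
\Pi=\begin{bmatrix} C & D \\ 0 & I\end{bmatrix}^{\ast}\begin{bmatrix} \delta^{-1}I & 0 \\ 0 & -\delta I\end{bmatrix}\begin{bmatrix} C & D \\ 0 & I\end{bmatrix}.
\]
Writing $G(\text{j}\omega)=\begin{bmatrix} C & D\end{bmatrix}\begin{bmatrix}((\text{j}\omega)^{\nu}I-A)^{-1}B \\ I\end{bmatrix}$, the quadratic form $\rho$ of (\ref{DefChi}) applied to $\begin{bmatrix}((\text{j}\omega)^{\nu}I-A)^{-1}B \\ I\end{bmatrix}$ with weight $\Pi$ reproduces exactly $\delta^{-1}G^{\ast}G-\delta I$, so the prescribed bound is precisely the FDI (\ref{EqLDI}) over $\Omega_{L}$.

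Next I would identify this FDI with statement i) of Theorem \ref{thmKYPforFOS}. With $N=\begin{bmatrix} A & B \\ I & 0\end{bmatrix}$ and $\Xi_{\theta}=\begin{bmatrix} I_{n} & -\theta I_{n}\end{bmatrix}$, one has $\Xi_{\theta}N=\begin{bmatrix} A-\theta I & B\end{bmatrix}$, whose null space $S_{\theta}=(\Xi_{\theta}N)_{\perp}$ is spanned by $\begin{bmatrix}((\text{j}\omega)^{\nu}I-A)^{-1}B \\ I\end{bmatrix}$ when $\theta=(\text{j}\omega)^{\nu}$; thus $S_{\theta}^{\ast}\Pi S_{\theta}<0$ for all $\theta\in\overline{\mathbf{\Theta}}$ is exactly the FDI over $\Omega_{L}$. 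For the low-frequency range I would take $\Delta=\begin{bmatrix} 0 & e^{\text{j}\bm{\varphi}} \\ e^{-\text{j}\bm{\varphi}} & 0\end{bmatrix}$ and $\Sigma=\begin{bmatrix} -1 & 0 \\ 0 & \omega_{L}^{2\nu}\end{bmatrix}$ as tabulated, and verify through Proposition \ref{propCurve} that $\mathbf{\Theta}(\Delta,\Sigma)=\{(\text{j}\omega)^{\nu}:0\leq\omega\leq\omega_{L}\}$ represents a curve: indeed $\det(\Delta)=-1<0$ and $\alpha=-1<0<\gamma=\omega_{L}^{2\nu}$. Because $\Sigma_{11}=-1\not\geq0$ the curve is bounded, so $\overline{\mathbf{\Theta}}=\mathbf{\Theta}$, and Theorem \ref{thmKYPforFOS} yields the equivalence of the FDI with the existence of $U,V\in\mathcal{H}_{n}$, $V>0$, satisfying $N^{\ast}(\Delta\otimes U+\Sigma\otimes V)N+\Pi<0$.

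Finally I would carry out the algebra. Computing the Kronecker sum gives $\Delta\otimes U+\Sigma\otimes V=\begin{bmatrix} -V & e^{\text{j}\bm{\varphi}}U \\ e^{-\text{j}\bm{\varphi}}U & \omega_{L}^{2\nu}V\end{bmatrix}$, and conjugating by $N$ produces a two-by-two block matrix whose $(1,1)$ entry is $Sym(X)-A^{T}VA+\omega_{L}^{2\nu}V$, whose off-diagonal entries are $Y$ and $Y^{\ast}$, and whose $(2,2)$ entry is $-B^{T}VB$, where $X=e^{\text{j}\bm{\varphi}}A^{T}U$ and $Y=-B^{T}VA+e^{\text{j}\bm{\varphi}}B^{T}U$ (using $U=U^{\ast}$, $V=V^{\ast}$ and the reality of $A,B$ to collapse the cross terms into $Sym(X)$ and into $Y,Y^{\ast}$). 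Adding $\Pi$ contributes $\delta^{-1}\begin{bmatrix} C^{T} \\ D^{T}\end{bmatrix}\begin{bmatrix} C & D\end{bmatrix}$ on the diagonal blocks together with $-\delta I$ in the $(2,2)$ position; a Schur complement against a fresh $-\delta I$ block then extracts the factor $\begin{bmatrix} C & D\end{bmatrix}$ and reproduces precisely the three-by-three LMI (\ref{EqLFFOS}). The main obstacle is entirely this bookkeeping: the scaling $\delta^{-1}$ versus $-\delta$ inside $\Pi$ must be chosen so that the Schur complement regenerates the two $-\delta I$ blocks of (\ref{EqLFFOS}) exactly, and the factors $e^{\pm\text{j}\bm{\varphi}}$ must be tracked carefully so that the symmetrizations close up as claimed; once this is done, the conceptual reduction of the norm bound to an FDI and the appeal to the already-established FGKYP are immediate.
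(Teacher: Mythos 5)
Your proof is correct and follows essentially the same route as the paper: recast the low-frequency bound as the FDI over $\overline{\mathbf{\Theta}}(\Delta,\Sigma)$ with the tabulated $\Delta,\Sigma$, invoke Theorem \ref{thmKYPforFOS}, and recover (\ref{EqLFFOS}) by the block computation of $N^{\ast}(\Delta\otimes U+\Sigma\otimes V)N$ plus a Schur complement. The only (immaterial) difference is bookkeeping of the scaling: you absorb $\delta^{-1}$ into $\Pi$ from the start, whereas the paper takes $\Pi$ with $-\delta^{2}I$ and rescales $U,V$ just before applying the Schur complement.
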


\begin{proof}
Let $\Delta=\left[
\begin{array}
[c]{cc}%
0 & e^{\text{j}\bm{\varphi}}\\
e^{-\text{j}\bm{\varphi}} & 0
\end{array}
\right]  $, $\Sigma=\left[
\begin{array}
[c]{cc}%
-1 & 0\\
0 & \omega_{L}^{2\nu}%
\end{array}
\right]  ,$ and then it can readily be verified that $\overline{\mathbf{\Theta
}}(\Delta,\Sigma)$ can represent a curve on the complex plane with
the frequency range $\Omega_{L}$. Let $\theta(\omega)\triangleq e^{\text{j}\frac
{\pi}{2}\nu}\omega^{\nu}$, then $G(\text{j}\omega)=C
(\theta(\omega)I-A)^{-1}B+D.$

By some basic matrix calculations, we get
\begin{align}
& \underset{\omega}{\sup}\sigma_{\max}(G(\text{j}\omega))<\delta \notag\\
\Leftrightarrow & G^{\ast}(\text{j}\omega)G(\text{j}\omega)-\delta^{2}%
I<0,\forall \omega \in \Omega_{L}\notag\\
\Leftrightarrow & \left[
\begin{array}
[c]{c}%
H(\theta)\\
I
\end{array}
\right]  ^{\ast}\Pi \left[
\begin{array}
[c]{c}%
H(\theta)\\
I
\end{array}
\right]  <0,\forall \theta\in \overline{\mathbf{\Theta}}
(\Delta,\Sigma) \label{EqHSita}
\end{align}
where $H(\theta)\triangleq(\theta I-A)^{-1}B$ and
\begin{equation}
\Pi \triangleq \left[
\begin{array}
[c]{cc}%
C^TC & C^TD\\
D^TC & D^TD-\delta^{2}I
\end{array}
\right]
\end{equation}

According to the theorem \ref{thmKYPforFOS}, the last part of (\ref{EqHSita}) is also
equivalent to the following LMI.
\begin{align*}
\begin{bmatrix}
A & B \\
I & 0
\end{bmatrix}^T\begin{bmatrix}
-V & e^{\text{j}\bm{\varphi}}U\\
e^{-\text{j}\bm{\varphi}}U & \omega_{L}^{2\nu}V
\end{bmatrix}\begin{bmatrix}
A & B \\
I & 0
\end{bmatrix}+\Pi<0
\end{align*}

This LMI can be simplified as%
\begin{align}
& \begin{bmatrix}
Sym(X)-A^TVA+\omega_{L}^{2\nu}V & Y^{\ast}\\
Y & -\delta^2 I-B^TVB
\end{bmatrix} + \begin{bmatrix}
C & D
\end{bmatrix}^T\begin{bmatrix}
C & D
\end{bmatrix}<0 \label{EqLFLMI}
\end{align}

where $X\triangleq e^{\text{j}\bm{\varphi}}A^TU$, $Y\triangleq-B^TVA+e^{\text{j}\bm{\varphi}}B^TU$.

Rescaling $U$,$V$ and utilizing the Schur complement theorem, (\ref{EqLFFOS}) is finally achieved.
\end{proof}

\begin{theorem}
[LMI for FOS of High Frequency] Consider FOS with its transfer function $G(s)$ in (\ref{equTransFOS}).
Given a prescribed $L_{\infty}$ performance bound $\delta>0$, then
$\left \Vert G(s)\right \Vert _{L_{\infty}}=\underset{\omega}{\sup}%
\sigma_{\max}(G(\text{j}\omega)<\delta$, $\omega$  belong to the principal Riemann surface and
$\omega \in \Omega_{H}\triangleq \{
\omega \in%
\mathbb{R}^+
:\omega \geq \omega_{H}\}$, holds if and
only if there exist $U,V\in \mathcal{H}_{n},V>0$, such that
\begin{equation}
\left[
\begin{array}
[c]{ccc}
Sym(X)+A^TVA-\omega_{H}^{2\nu}V & Y^{\ast} & C^T\\
Y & -\delta I+B^TVB & D^T\\
C & D & -\delta I
\end{array}
\right]  <0
\end{equation}
where $X\triangleq e^{\text{j}\bm{\varphi}}A^TU$, $Y\triangleq B^TVA+e^{\text{j}\bm{\varphi}}B^TU$,
 $\sigma_{\max}$ is the maximum singular value.
\end{theorem}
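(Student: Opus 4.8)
The plan is to reproduce the logical skeleton of the proof of Theorem~\ref{LMIforLowF} essentially verbatim, changing only the matrix $\Sigma$ that encodes the frequency band and then tracking the resulting sign changes through the block algebra. The one conceptual departure from the low-frequency case is that the high-frequency band $\Omega_H=\{\omega\geq\omega_H\}$ is unbounded, so $\overline{\mathbf{\Theta}}$ must adjoin the point at infinity; since Theorem~\ref{thmKYPforFOS} is stated for $\overline{\mathbf{\Theta}}=\mathbf{\Theta}\cup\{\infty\}$, the same machinery covers this once the setup is verified.

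First I would set $\Delta=\begin{bmatrix}0 & e^{\text{j}\bm{\varphi}}\\ e^{-\text{j}\bm{\varphi}} & 0\end{bmatrix}$ and, reading the HF column of the frequency table, $\Sigma=\begin{bmatrix}1 & 0\\ 0 & -\omega_H^{2\nu}\end{bmatrix}$. Using $\det(\Delta)=-1<0$ together with Proposition~\ref{propCurve} (the eigenvalue data $1,-\omega_H^{2\nu}$ falls into the $\alpha<0<\gamma$ branch under the ordering convention of Lemma~\ref{LemTrans}), I would verify that $\overline{\mathbf{\Theta}}(\Delta,\Sigma)$ represents a curve whose parametrization $\theta(\omega)\triangleq e^{\text{j}\frac{\pi}{2}\nu}\omega^{\nu}$ satisfies $\rho(\theta,\Sigma)=\omega^{2\nu}-\omega_H^{2\nu}\geq0$, i.e. exactly $\omega\geq\omega_H$, augmented by $\theta=\infty$. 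With this $\theta$, one has $G(\text{j}\omega)=C(\theta I-A)^{-1}B+D$.

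Next, as in the low-frequency proof, I would convert the norm bound into a frequency-domain inequality: $\sup_{\omega}\sigma_{\max}(G(\text{j}\omega))<\delta$ is equivalent to $G^{\ast}(\text{j}\omega)G(\text{j}\omega)-\delta^{2}I<0$ on the admissible band, which is the $\rho$-form FDI with $H(\theta)=(\theta I-A)^{-1}B$ and $\Pi=\begin{bmatrix}C^{T}C & C^{T}D\\ D^{T}C & D^{T}D-\delta^{2}I\end{bmatrix}$. Applying Theorem~\ref{thmKYPforFOS} to this $\Pi,\Delta,\Sigma$ makes the FDI equivalent to the existence of $U,V\in\mathcal{H}_{n}$, $V>0$, with $N^{\ast}(\Delta\otimes U+\Sigma\otimes V)N+\Pi<0$, where $N=\begin{bmatrix}A & B\\ I & 0\end{bmatrix}$. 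Here $\Delta\otimes U+\Sigma\otimes V=\begin{bmatrix}V & e^{\text{j}\bm{\varphi}}U\\ e^{-\text{j}\bm{\varphi}}U & -\omega_H^{2\nu}V\end{bmatrix}$, and carrying out the congruence by $N$ and adding $\Pi$ yields, with $X=e^{\text{j}\bm{\varphi}}A^{T}U$ and $Y=B^{T}VA+e^{\text{j}\bm{\varphi}}B^{T}U$, the block inequality $\begin{bmatrix}Sym(X)+A^{T}VA-\omega_H^{2\nu}V & Y^{\ast}\\ Y & B^{T}VB-\delta^{2}I\end{bmatrix}+\begin{bmatrix}C & D\end{bmatrix}^{T}\begin{bmatrix}C & D\end{bmatrix}<0$. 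The sign reversals relative to (\ref{EqLFLMI}) — namely $+A^{T}VA$, $-\omega_H^{2\nu}V$, $+B^{T}VB$, and the $+B^{T}VA$ inside $Y$ — are precisely the consequence of the HF entries $(1,-\omega_H^{2\nu})$ replacing the LF entries $(-1,\omega_L^{2\nu})$.

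Finally I would rescale $U\mapsto U/\delta$, $V\mapsto V/\delta$ (which preserves $V>0$) to turn $-\delta^{2}I$ into $-\delta I$ and the quadratic term into $\tfrac{1}{\delta}\begin{bmatrix}C & D\end{bmatrix}^{T}\begin{bmatrix}C & D\end{bmatrix}$, then invoke the Schur complement with the negative-definite corner $-\delta I$ to border the block inequality by $C^{T},D^{T}$ and $-\delta I$, producing the stated $3\times3$ LMI. The main obstacle I anticipate is not this algebra, which is routine once the framework is fixed, but correctly handling the unboundedness of the high-frequency band: unlike the low-frequency case where $\overline{\mathbf{\Theta}}=\mathbf{\Theta}$, here $\overline{\mathbf{\Theta}}$ must include $\infty$, so one has to confirm both that $\mathbf{\Theta}(\Delta,\Sigma)$ genuinely represents the unbounded curve $\{\theta(\omega):\omega\geq\omega_H\}$ and that condition i) of Theorem~\ref{thmKYPforFOS} evaluated at $\theta=\infty$ collapses to $D^{T}D-\delta^{2}I<0$, i.e. $\sigma_{\max}(D)<\delta$, the correct limiting constraint as $\omega\to\infty$.
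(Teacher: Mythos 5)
Your proposal is correct and takes essentially the same route as the paper: the paper's own proof of the high-frequency case consists precisely of re-running the low-frequency argument with $\Delta=\left[\begin{smallmatrix}0 & e^{\text{j}\bm{\varphi}}\\ e^{-\text{j}\bm{\varphi}} & 0\end{smallmatrix}\right]$ and $\Sigma=\left[\begin{smallmatrix}1 & 0\\ 0 & -\omega_H^{2\nu}\end{smallmatrix}\right]$, and your FDI conversion, block algebra, rescaling, and Schur-complement steps reproduce the stated LMI exactly. Your additional care about the unbounded band (adjoining $\theta=\infty$, where condition i) of Theorem \ref{thmKYPforFOS} collapses to $D^TD-\delta^2 I<0$) is left implicit in the paper but is consistent with its definition (\ref{DefOSita}) and the hypotheses of Theorem \ref{thmKYPforFOS}.
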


\begin{theorem}
[LMI for FOS of Middle Frequency]Consider FOS with its transfer function $G(s)$ in (\ref{equTransFOS}).
Given a prescribed $L_{\infty}$ performance bound $\delta>0$, then
$\left \Vert G(s)\right \Vert _{L_{\infty}}=\underset{\omega}{\sup}%
\sigma_{\max}(G(\text{j}\omega)<\delta$ , $\omega \in \Omega_{M}\triangleq \{
\omega \in%
\mathbb{R}^+
:\omega_1\leq\omega \leq \omega_{2}\}$, holds if and
only if there exist $U,V\in \mathcal{H}_{n},V>0$, such that
\begin{equation}
\left[
\begin{array}
[c]{ccc}
Sym(X)-A^TVA-\omega_{1}^{\nu}\omega_{2}^{\nu}V & Y^{\ast} & C^T\\
Y & -\delta I-B^TVB & D^T\\
C & D & -\delta I
\end{array}
\right]  <0
\end{equation}
where $X\triangleq A^T(e^{\text{j}\bm{\varphi}}U+\omega_cV)$, $Y\triangleq-B^TVA+B^T(e^{\text{j}\bm{\varphi}}U+\omega_cV)$,
$\omega_c=\text{j}^{\nu}\dfrac{\omega^{\nu}_1+\omega^{\nu}_2}{2}$, $\sigma_{\max}$ is the maximum singular value.
\end{theorem}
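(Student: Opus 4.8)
The plan is to follow exactly the template of the proof of Theorem~\ref{LMIforLowF}, the only genuine change being the choice of $\Sigma$ dictated by the MF column of the frequency table. First I would set
$$\Delta = \begin{bmatrix} 0 & e^{\text{j}\bm{\varphi}} \\ e^{-\text{j}\bm{\varphi}} & 0 \end{bmatrix}, \qquad \Sigma = \begin{bmatrix} -1 & \omega_c \\ \overline{\omega}_c & -\omega_1^\nu \omega_2^\nu \end{bmatrix}, \qquad \omega_c = \text{j}^\nu \frac{\omega_1^\nu + \omega_2^\nu}{2},$$
and verify that $\overline{\mathbf{\Theta}}(\Delta,\Sigma)$ represents precisely the curve associated with $\Omega_M$. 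Since $\det(\Delta) = -1 < 0$, writing $\theta = (\text{j}\omega)^\nu = e^{\text{j}\pi\nu/2}\omega^\nu$ I would check that $\rho(\theta,\Delta) = 0$ holds identically, and then compute $\rho(\theta,\Sigma) = -\omega^{2\nu} + (\omega_1^\nu+\omega_2^\nu)\omega^\nu - \omega_1^\nu\omega_2^\nu = -(\omega^\nu - \omega_1^\nu)(\omega^\nu - \omega_2^\nu)$, which is nonnegative exactly when $\omega_1 \le \omega \le \omega_2$ by monotonicity of $\omega \mapsto \omega^\nu$ on $\mathbb{R}^+$. As $\Omega_M$ is bounded, $\overline{\mathbf{\Theta}} = \mathbf{\Theta}$ and no point at infinity is adjoined.

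Next, exactly as in the low-frequency case, I would recast the performance bound as a finite FDI. Setting $H(\theta) = (\theta I - A)^{-1}B$ and $\theta(\omega) = e^{\text{j}\frac{\pi}{2}\nu}\omega^\nu$ so that $G(\text{j}\omega) = C(\theta I - A)^{-1}B + D$, the chain $\sup_\omega \sigma_{\max}(G(\text{j}\omega)) < \delta \Leftrightarrow G^*(\text{j}\omega)G(\text{j}\omega) - \delta^2 I < 0 \Leftrightarrow \rho(H(\theta),\Pi) < 0$ for all $\theta \in \overline{\mathbf{\Theta}}(\Delta,\Sigma)$ holds with $\Pi = \begin{bmatrix} C^T C & C^T D \\ D^T C & D^T D - \delta^2 I \end{bmatrix}$. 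Applying Theorem~\ref{thmKYPforFOS} with $N = \begin{bmatrix} A & B \\ I & 0 \end{bmatrix}$ then converts this FDI into the equivalent statement that there exist $U,V \in \mathcal{H}_n$ with $V > 0$ and $N^*(\Delta \otimes U + \Sigma \otimes V)N + \Pi < 0$.

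Finally I would expand the inner Hermitian block as $\Delta \otimes U + \Sigma \otimes V = \begin{bmatrix} -V & M \\ M^* & -\omega_1^\nu\omega_2^\nu V \end{bmatrix}$ with $M \triangleq e^{\text{j}\bm{\varphi}}U + \omega_c V$, and carry out the congruence $N^*(\cdot)N$. Setting $X = A^T M$ and $Y = -B^T V A + B^T M$, which are exactly the quantities named in the statement, the resulting $2\times 2$ block matrix has diagonal entries $Sym(X) - A^T V A - \omega_1^\nu\omega_2^\nu V$ and $-B^T V B$ with off-diagonal blocks $Y^*$ and $Y$; adding $\Pi$, splitting off the factor $\begin{bmatrix} C^T \\ D^T \end{bmatrix}\begin{bmatrix} C & D \end{bmatrix}$, rescaling $\delta^2 \to \delta$ and invoking the Schur complement yields the claimed $3\times 3$ LMI. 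The main obstacle is the curve-representation step: the off-diagonal complex entry $\omega_c$ makes $\rho(\theta,\Sigma)$ far less transparent than in the purely diagonal LF and HF cases, and the clean factorization $-(\omega^\nu-\omega_1^\nu)(\omega^\nu-\omega_2^\nu)$ hinges on the phase $\text{j}^\nu$ buried in $\omega_c$ cancelling against that of $\theta = (\text{j}\omega)^\nu$ so that $2\,\mathrm{Re}(\overline{\omega}_c\theta)$ collapses to the real quantity $(\omega_1^\nu+\omega_2^\nu)\omega^\nu$; the subsequent congruence and Schur steps are routine once $M$ is introduced.
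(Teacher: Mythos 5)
Your proposal is correct and takes essentially the same route as the paper: the paper's own proof of the middle-frequency case consists precisely of choosing $\Delta=\begin{bmatrix}0 & e^{\text{j}\bm{\varphi}}\\ e^{-\text{j}\bm{\varphi}} & 0\end{bmatrix}$, $\Sigma=\begin{bmatrix}-1 & \omega_c\\ \overline{\omega}_c & -\omega_1^{\nu}\omega_2^{\nu}\end{bmatrix}$ and then repeating the low-frequency argument (FDI reformulation with $\Pi$, application of Theorem~\ref{thmKYPforFOS}, rescaling and Schur complement). Your write-up simply makes explicit the curve verification $\rho(\theta,\Sigma)=-(\omega^{\nu}-\omega_1^{\nu})(\omega^{\nu}-\omega_2^{\nu})$ and the block computations that the paper leaves implicit under ``can be proved similarly.''
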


\begin{theorem}
[LMI for FOS of Infinite Frequency]Consider FOS with its transfer function $G(s)$ in (\ref{equTransFOS}).
Given a prescribed $L_{\infty}$ performance bound $\delta>0$, then
$\left \Vert G(s)\right \Vert _{L_{\infty}}=\underset{\omega}{\sup}%
\sigma_{\max}(G(\text{j}\omega)<\delta$ ,  $\omega$  belong to the principal Riemann surface and
$\omega \in \Omega_{I}\triangleq \mathbb{R}^+\cup \{+\infty\}$, holds if and
only if there exist $U,V\in \mathcal{H}_{n},V>0$, such that
\begin{equation}
\left[
\begin{array}
[c]{ccc}
Sym(X) & Y^{\ast} & C^T\\
Y & -\delta I & D^T\\
C & D & -\delta I
\end{array}
\right]  <0
\end{equation}
where $X\triangleq e^{\text{j}\bm{\varphi}}A^TU$, $Y\triangleq e^{\text{j}\bm{\varphi}}B^TU$,
 $\sigma_{\max}$ is the maximum singular value.
\end{theorem}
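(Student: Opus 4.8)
The plan is to follow the same template as the proof of Theorem \ref{LMIforLowF}, specialised to the degenerate choice $\Sigma=0$, which encodes the absence of any frequency restriction. First I would set
\[
\Delta=\left[\begin{array}{cc} 0 & e^{\text{j}\bm{\varphi}}\\ e^{-\text{j}\bm{\varphi}} & 0\end{array}\right],\qquad \Sigma=0,
\]
and check that $\det(\Delta)=-1<0$, while the parameters of Proposition \ref{propCurve} are $\alpha=\beta=\gamma=0$, so the degenerate case $0\le\alpha\le\gamma$ holds and $\mathbf{\Theta}(\Delta,0)$ represents a curve. Because $\Delta_{11}=0$ and $\Sigma_{11}=0\ge0$, the curve is unbounded, hence $\overline{\mathbf{\Theta}}=\mathbf{\Theta}\cup\{\infty\}$; and since $\rho(\theta,\Sigma)=0\ge0$ imposes no constraint, $\overline{\mathbf{\Theta}}(\Delta,0)$ is exactly $\{(\text{j}\omega)^{\nu}:\omega\in\mathbb{R}^{+}\}\cup\{\infty\}$, matching $\Omega_{I}$.

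Next, writing $\theta(\omega)\triangleq e^{\text{j}\frac{\pi}{2}\nu}\omega^{\nu}$ so that $G(\text{j}\omega)=C(\theta I-A)^{-1}B+D$, I would rewrite the norm bound through the standard chain
\[
\sup_{\omega}\sigma_{\max}(G(\text{j}\omega))<\delta \;\Leftrightarrow\; G^{\ast}(\text{j}\omega)G(\text{j}\omega)-\delta^{2}I<0 \;\Leftrightarrow\; \left[\begin{array}{c} H(\theta)\\ I\end{array}\right]^{\ast}\Pi\left[\begin{array}{c} H(\theta)\\ I\end{array}\right]<0,
\]
for all $\theta\in\overline{\mathbf{\Theta}}(\Delta,0)$, with $H(\theta)\triangleq(\theta I-A)^{-1}B$ and $\Pi$ the same Hermitian matrix built from $C,D,\delta$ as in the low-frequency proof. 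The inclusion of $\theta=\infty$ here simply enforces the limiting inequality $D^{T}D-\delta^{2}I<0$, consistent with taking a supremum over the closed frequency range.

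Then I would invoke Theorem \ref{thmKYPforFOS}, which turns the FDI into the existence of $U,V\in\mathcal{H}_{n}$ with $V>0$ satisfying $N^{\ast}(\Delta\otimes U+\Sigma\otimes V)N+\Pi<0$. The decisive simplification is that $\Sigma=0$ forces $\Sigma\otimes V=0$, so $V$ disappears from the inequality entirely and the matrix reduces to $N^{\ast}(\Delta\otimes U)N+\Pi<0$; the requirement $V>0$ then survives only as a vacuously satisfiable existence clause inherited from the general theorem. A direct block multiplication of $N^{\ast}(\Delta\otimes U)N$ with $N=\left[\begin{smallmatrix} A & B\\ I & 0\end{smallmatrix}\right]$ yields a $2\times2$ block matrix with diagonal blocks $Sym(X)$ and $0$ and off-diagonal block $Y$, where $X=e^{\text{j}\bm{\varphi}}A^{T}U$ and $Y=e^{\text{j}\bm{\varphi}}B^{T}U$. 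Adding $\Pi$, regrouping as $\left[\begin{smallmatrix} Sym(X) & Y^{\ast}\\ Y & -\delta^{2}I\end{smallmatrix}\right]+\left[\begin{smallmatrix} C^{T}\\ D^{T}\end{smallmatrix}\right]\left[\begin{smallmatrix} C & D\end{smallmatrix}\right]$, and applying a Schur complement against an identity block, followed by the diagonal congruence $\mathrm{diag}(\tfrac{1}{\sqrt{\delta}}I,\tfrac{1}{\sqrt{\delta}}I,\sqrt{\delta}I)$ together with the rescaling $U\mapsto U/\delta$, produces the stated $3\times3$ LMI with $-\delta I$ in both lower diagonal blocks.

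The main obstacle I anticipate is not the algebra --- which is in fact lighter here than in the finite-frequency cases, since $V$ drops out --- but the bookkeeping at the point at infinity: I must confirm that $\Sigma=0$ genuinely yields an unbounded curve so that $\infty$ is correctly adjoined, and that the hypotheses of Theorem \ref{thmKYPforFOS} (that $\mathbf{\Theta}$ represents curves and $\overline{\mathbf{\Theta}}=\mathbf{\Theta}\cup\{\infty\}$) remain valid for this degenerate $\Sigma$. The only other point requiring care is justifying that the surviving existence of some $V>0$ entails no loss of generality, which is immediate because $V$ no longer appears in the inequality.
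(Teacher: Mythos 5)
Your proposal is correct and follows exactly the paper's route: the paper proves this case by taking $\Delta=\left[\begin{smallmatrix}0 & e^{\text{j}\bm{\varphi}}\\ e^{-\text{j}\bm{\varphi}} & 0\end{smallmatrix}\right]$, $\Sigma=\mathbf{0}_2$ and repeating the low-frequency argument verbatim, which is precisely your plan. Your write-up actually supplies details the paper leaves implicit (the degenerate case $\alpha=\gamma=0$ of Proposition \ref{propCurve}, the unboundedness check that adjoins $\theta=\infty$, the vanishing of $V$ from the LMI, and the Schur-complement/rescaling step), all of which check out.
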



\begin{proof}
The theorem of high frequency and middle frequency can be proved similarly to the proof of
low frequency. The curve $\overline{\mathbf{\Theta}}(\Delta,\Sigma)$
in high frequency is chosen as%
\[
\Delta=\left[
\begin{array}
[c]{cc}%
0 & e^{\text{j}\bm{\varphi}}\\
e^{-\text{j}\bm{\varphi}} & 0
\end{array}
\right]  ,\Sigma=\left[
\begin{array}
[c]{cc}%
1 & 0\\
0 & -\omega_{H}^{2\nu}%
\end{array}
\right]
\]

The curve $\overline{\mathbf{\Theta}}(\Delta,\Sigma)$ in middle
frequency is chosen as%
\[
\Delta=\left[
\begin{array}
[c]{cc}%
0 & e^{\text{j}\bm{\varphi}}\\
e^{-\text{j}\bm{\varphi}} & 0
\end{array}
\right]  ,\Sigma=\left[
\begin{array}
[c]{cc}%
-1 & \text{j}^{\nu}\dfrac{\omega_{1}^{\nu}+\omega_{2}^{\nu}}{2}\\
(-\text{j})^{\nu}\dfrac{\omega_{1}^{\nu}+\omega_{2}^{\nu}}{2} &
-\omega_{1}^{\nu}\omega_{2}^{\nu}%
\end{array}
\right]
\]


The curve $\overline{\mathbf{\Theta}}(\Delta,\Sigma)$
for infinite frequency is chosen as%
\[
\Delta=\left[
\begin{array}
[c]{cc}%
0 & e^{\text{j}\bm{\varphi}}\\
e^{-\text{j}\bm{\varphi}} & 0
\end{array}
\right]  ,\Sigma=\mathbf{0}_2
\]

This ends the proof.
\end{proof}

\begin{remark}
For the infinite frequency range, when the fractional order $\nu=1$, the condition
is as the same as the KYP\cite{Iwasaki2005Generalized}.
Meanwhile, Liang\cite{Liang2015Bounded} proves a theorem of $L_\infty$ for infinite frequency, but he utilizes the GKYP lemma directly. The results are different because he chooses the $\Sigma$ as
\[
\Sigma=\begin{bmatrix}
0 & 1-\alpha \\
1-\alpha & 0
\end{bmatrix}
\]
but the two theorems are equivalent.
\end{remark}

\section{FGKYP for $H_{\infty}$ Norm of FOS}

In this section, we check the $H_{\infty}$ norm of FOS.
\begin{definition}
For a matrix function $T(s)$, the $H_{\infty}$ norm of $T(s)$ is defined as
\begin{equation}
\left \Vert T(s) \right \Vert_{H_{\infty}}\triangleq\underset{\text{Re}(s)\geq0}{\sup}\sigma_{\max}(T(s))
\end{equation}
where $\sigma_{\max}$ is the maximum singular value.
\end{definition}

When we want to check $\left \Vert G(s) \right \Vert_{H_{\infty}} < \delta$ where $G(s)$ is a transfer function in (\ref{equTransFOS}), we get the following LMI
\begin{equation}
\left[
\begin{array}
[c]{c}%
(s^{\nu}-A)^{-1}B\\
I
\end{array}
\right]  ^{\ast}\Pi \left[
\begin{array}
[c]{c}%
(s^{\nu}-A)^{-1}B\\
I
\end{array}
\right]  <0 \label{EqLDIs}%
\end{equation}
where $s\in\mathbb{C},\text{Re}(s)\geq0$. Now we use the $S$-Procedure to check this LMI condition.

\begin{definition}
A convex region described by two straight lines on the complex plane is defined as $\mathbf{\Theta}(\Delta,\Sigma)$.
$\Delta,\Sigma\in\mathcal{H}_2$ and $\Delta\triangleq\begin{bmatrix}
0 & \alpha \\
\overline{\alpha} & 0
\end{bmatrix}$, $\Sigma\triangleq\begin{bmatrix}
0 & \beta \\
\overline{\beta} & 0
\end{bmatrix}$.
\begin{equation}
\mathbf{\Theta}(\Delta,\Sigma)\triangleq\left\{
\theta\in\mathbb{C}:\begin{bmatrix}
\theta\\1
\end{bmatrix}^{\ast}\Delta\begin{bmatrix}
\theta\\1
\end{bmatrix}\geq0,\begin{bmatrix}
\theta\\1
\end{bmatrix}^{\ast}\Sigma\begin{bmatrix}
\theta\\1
\end{bmatrix}\geq0
\right\} \label{defConvexD}
\end{equation}
\end{definition}

Define set $\mathcal{G}_1$ in (\ref{defG1}) as
\begin{equation}
\mathcal{G}_{1}=\left \{  \eta \eta^{\ast}:\eta=\left[
\begin{array}
[c]{c}%
(s^{\nu}I-A)^{-1}B\\
I
\end{array}
\right]  \zeta,%
\begin{array}
[c]{c}%
\zeta \in%
\mathbb{C}
^{m},\zeta \neq0\\
s \in%
\mathbb{C}\cup\{\infty\},\text{Re}(s)\geq0
\end{array}
\right \}
\end{equation}
Then
\begin{align}
\mathcal{G}_{1} & =\left \{  \eta \eta^{\ast}:\eta \in \mathcal{M}_{\theta},\ \theta
\in \mathbf{\Theta}\right\} \notag\\
\mathcal{M}_{\theta} & \triangleq \left \{  \eta \in%
\mathbb{C}
^{n+m}:\eta \neq0,\Xi_{\theta}N\eta=0\right \}
\label{defG1s}
\end{align}
where
\begin{equation}
\Xi_{\theta}\triangleq \left \{
\begin{array}
[c]{cc}%
\left[  I_{n}\ -\theta I_{n}\right]  & (\theta \in%
\mathbb{C}
)\\
\left[  0\ -I_{n}\right]  & (\theta=\infty)
\end{array}
\right. \label{defXis}
\end{equation}
\begin{equation}
N\triangleq \left[
\begin{array}
[c]{cc}%
A & B\\
I & 0
\end{array}
\right]\label{defNs}
\end{equation}

Let $s$ belong to the principal Riemann surface, i.e. $\{ s\mid -\pi<\text{arg}(s)<\pi\}$, only on which the roots of
$\det (s^{\nu}I-A)=0$ decide the time-domain behavior and stability of fractional system \cite{Beyer1995Definition}.
Now, we need to find a rank-one separable set $\mathcal{F}$, which can satisfy (\ref{EqStrictSP}).

\subsection{For fractional order $0< \nu\leq 1$}

\begin{lemma}\label{lemFGgeq0}
\cite{Rantzer1996Kalman}
Let $f,g\in\mathbb{C}^n$ and $g\neq0$. Then
\begin{equation}
fg^*+gf^*\geq0\Leftrightarrow f=\theta g\ \text{for some }\theta\in\mathbb{C}\text{ with Re}(\theta)\geq0
\end{equation}
\end{lemma}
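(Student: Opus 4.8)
The plan is to treat this as a statement about the Hermitian, rank-at-most-two matrix $M \triangleq fg^* + gf^*$, and to characterize exactly when it is positive semidefinite. The reverse implication is immediate: if $f = \theta g$ with $\text{Re}(\theta) \geq 0$, then
\[
M = \theta gg^* + \overline{\theta}\, gg^* = 2\,\text{Re}(\theta)\, gg^*,
\]
which is a nonnegative multiple of the positive-semidefinite matrix $gg^*$, hence $M \geq 0$.

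For the forward implication, suppose $M \geq 0$. The key idea is to test the quadratic form against the orthogonal complement of $g$. For any $x \in \mathbb{C}^n$ with $g^* x = 0$ we have
\[
x^* M x = (x^* f)(g^* x) + (x^* g)(f^* x) = 0,
\]
since $g^* x$ and its conjugate $x^* g$ both vanish. Because $M$ is positive semidefinite, any vector on which the form vanishes lies in the kernel, so $Mx = 0$. Expanding, $Mx = f(g^* x) + g(f^* x) = g(f^* x)$, and since $g \neq 0$ this forces $f^* x = 0$. Thus $f^* x = 0$ for every $x$ orthogonal to $g$, which means $f$ lies in the one-dimensional span of $g$; that is, $f = \theta g$ for some $\theta \in \mathbb{C}$. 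Substituting back gives $M = 2\,\text{Re}(\theta)\, gg^*$ as above, and since $gg^* \neq 0$ is positive semidefinite, $M \geq 0$ forces $\text{Re}(\theta) \geq 0$, completing the argument.

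The only nontrivial step is the deduction that $f$ is parallel to $g$, and the crux there is the standard fact that for a positive-semidefinite $M$ the condition $x^* M x = 0$ already implies $Mx = 0$ (equivalently, writing $M = M^{1/2} M^{1/2}$, we have $x^* M x = \| M^{1/2} x \|^2$, so vanishing of the form kills $M^{1/2} x$ and hence $Mx$). This result is the inequality analogue of Lemma \ref{lem_fjwg}: there the equality $e^{-\text{j}\bm{\varphi}}fg^* + e^{\text{j}\bm{\varphi}}gf^* = 0$ pinned $\theta$ to the single curve $\{(\text{j}\omega)^\nu\}$, whereas relaxing to an inequality here enlarges the admissible parameter set to the entire closed right half-plane $\text{Re}(\theta)\geq 0$, which is exactly the region $\mathbf{\Theta}(\Delta,\Sigma)$ needed for the $H_\infty$ analysis of the FOS.
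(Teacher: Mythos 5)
Your proof is correct, but it takes a genuinely different route from the one the paper relies on. The paper never proves this lemma itself: it is quoted from \cite{Rantzer1996Kalman}, and the pattern the paper uses for its sibling statements (Lemma \ref{lem_fjwg} and Lemma \ref{lemfgIms}) is to invoke the matrix factorization results, Lemmas \ref{lemFGeq0} and \ref{lemFG_geq0}, which characterize $FG^*+GF^*=0$ (resp.\ $\geq 0$) by the existence of $U$ with $UU^*=I$ (resp.\ $UU^*\leq I$) and $F(I+U)=G(I-U)$; specializing to vectors makes $U$ a scalar $u$ in the closed unit disk, the case $u=-1$ is ruled out by $g\neq 0$, and the Cayley map $u\mapsto(1-u)/(1+u)$ carries the closed disk onto the closed right half-plane, yielding $f=\theta g$ with $\text{Re}(\theta)\geq0$. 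Your argument is instead elementary and self-contained: the reverse direction by direct computation, and the forward direction by testing $M=fg^*+gf^*$ on $\{g\}^{\perp}$, using the standard fact that $x^*Mx=0$ together with $M\geq0$ forces $Mx=0$, which gives $f^*x=0$ for every $x$ orthogonal to $g$, hence $f\in\mathrm{span}(g)$, after which the constraint $\text{Re}(\theta)\geq0$ drops out of $M=2\,\text{Re}(\theta)\,gg^*$. What your approach buys is independence from the factorization machinery and a transparent geometric reason for the half-plane condition; what the paper's approach buys is uniformity, since the same matrix lemma with different constraints on $U$ handles the equality version (Lemma \ref{lem_fjwg}), this lemma, and Lemma \ref{lemfgIms} in one stroke, and it extends verbatim to matrix-valued $F,G$, where your kernel argument would require additional rank bookkeeping.
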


\begin{lemma}\label{lemZetaReS}
Let $\mathbf{\Theta}(\Delta,\Sigma)$ be defined by (\ref{defConvexD}), $\Xi_{\theta}$ by (\ref{defXis}) and
$\zeta$ is a given vector. If the region $\mathbf{\Theta}(\Delta,\Sigma)$
defined by (\ref{defConvexD}) represents the region $\Omega=\{s^{\nu}\mid s\in\mathbb{C},
\text{Re}(s)\geq0,0<\nu\leq1\}$ on the complex plane and
$s$ belongs to principal Riemann surface,
then the following statements are equivalent.

i) $\Xi_{\theta}\zeta=0$ for some $\theta\in\mathbf{\Theta}$ with $\text{Re}(\theta)\geq0$;

ii) $\zeta^*\left[(\Delta+\Sigma)\otimes U\right]\zeta\geq0$ for all $U\in\mathcal{H}_n,U>0$.
\end{lemma}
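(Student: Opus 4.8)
The plan is to follow the template of Lemma~\ref{lemEquivtZeta}, but with the single free matrix $U$ in place of the pair $(U,V)$ and with the inequality characterization of Lemma~\ref{lemFGgeq0} replacing the equality characterization of Lemma~\ref{lem_fjwg}. First I would partition $\zeta=\left[f^{\ast}\ g^{\ast}\right]^{\ast}$ with $f,g\in\mathbb{C}^{n}$ and expand the quadratic form in ii). Since $\Delta$ and $\Sigma$ are both purely off-diagonal, so is their sum, and a direct computation gives
\[
\zeta^{\ast}\left[(\Delta+\Sigma)\otimes U\right]\zeta=\text{tr}\!\left[U\left((\alpha+\beta)gf^{\ast}+\overline{(\alpha+\beta)}\,fg^{\ast}\right)\right].
\]
With the normalization $\alpha=e^{\text{j}\bm{\varphi}}$, $\beta=e^{-\text{j}\bm{\varphi}}$ that makes $\mathbf{\Theta}(\Delta,\Sigma)$ represent the sector $\Omega$, we have $\alpha+\beta=2\cos\bm{\varphi}>0$ for $0<\nu\le1$, so condition ii) is exactly $\text{tr}\!\left[U\,(fg^{\ast}+gf^{\ast})\right]\ge0$ for every $U>0$.

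Next I would invoke the self-duality of the positive-semidefinite cone: for Hermitian $M$, $\text{tr}(UM)\ge0$ holds for all $U>0$ if and only if $M\ge0$. The nontrivial direction is obtained by testing $U=vv^{\ast}+\varepsilon I>0$ against an eigenvector $v$ associated with a putative negative eigenvalue of $M$ and letting $\varepsilon\downarrow0$. Applied to $M=fg^{\ast}+gf^{\ast}$, this shows that ii) is equivalent to $fg^{\ast}+gf^{\ast}\ge0$. Lemma~\ref{lemFGgeq0} then turns this matrix inequality into the alternative that either $g\ne0$ and $f=\theta g$ for some $\theta\in\mathbb{C}$ with $\text{Re}(\theta)\ge0$, or $g=0$. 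Every step in this chain is reversible, which is what will eventually deliver both implications of the stated equivalence.

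It then remains to read these two alternatives as condition i). When $g\ne0$, $f=\theta g$ is precisely $\Xi_{\theta}\zeta=0$ for the finite value $\theta$, while $g=0$ is precisely $\Xi_{\infty}\zeta=0$, i.e. the point at infinity adjoined to the region in (\ref{defXis}). The main obstacle, and the place where the hypothesis that $\mathbf{\Theta}(\Delta,\Sigma)$ represents $\Omega$ together with the restriction $0<\nu\le1$ must be used in full, is the identification of the $\theta$-set produced by Lemma~\ref{lemFGgeq0} with the set $\{\theta\in\mathbf{\Theta}:\text{Re}(\theta)\ge0\}$ appearing in i). Here I would go back to the two defining inequalities $\rho(\theta,\Delta)\ge0$ and $\rho(\theta,\Sigma)\ge0$ of (\ref{defConvexD}) rather than to their sum alone, since each inequality cuts out a half-plane whose boundary is one edge of the sector, and I would check that the degenerate configurations $g=0$ (handled through the $\Xi_{\infty}$ branch) and $f=0$ (giving $\theta=0\in\mathbf{\Theta}$) introduce no spurious directions. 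Once this region-matching is established, i)$\Leftrightarrow$ii) follows by tracing the equivalences in both directions.
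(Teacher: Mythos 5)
Your reduction of ii) to the matrix inequality $fg^{\ast}+gf^{\ast}\geq0$ is correct, and indeed more carefully justified than in the paper (the self-duality/trace argument is asserted there without proof); up to that point you follow exactly the paper's route: expand the form, use that the sector normalization forces $\alpha+\beta=2\sin(\tfrac{\pi}{2}\nu)=2\cos\bm{\varphi}>0$, and invoke Lemma \ref{lemFGgeq0}. The genuine gap is precisely the step you defer to the end, the ``region-matching'' between the half-plane $\{\theta:\text{Re}(\theta)\geq0\}$ delivered by Lemma \ref{lemFGgeq0} and the set $\{\theta\in\mathbf{\Theta}:\text{Re}(\theta)\geq0\}$ appearing in i). That step cannot be carried out: condition ii) depends on $\Delta$ and $\Sigma$ only through the sum $\Delta+\Sigma$, and since $\alpha+\beta$ is real and positive, $\rho(\theta,\Delta+\Sigma)\geq0$ describes exactly the closed right half-plane, not the sector. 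The two separate inequalities $\rho(\theta,\Delta)\geq0$ and $\rho(\theta,\Sigma)\geq0$, which cut out the two edges of the sector, are simply not recoverable from ii), so ``going back to the two defining inequalities'' cannot close the argument. Consequently ii) is equivalent to ``$f=\theta g$ for some $\theta$ with $\text{Re}(\theta)\geq0$, or $g=0$,'' which is strictly weaker than i) whenever $0<\nu<1$, because then $\mathbf{\Theta}$ has half-angle $\nu\pi/2<\pi/2$.

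Concretely: take any $g\neq0$, set $f=\text{j}g$ and $\zeta=\left[f^{\ast}\ g^{\ast}\right]^{\ast}$. Then $fg^{\ast}+gf^{\ast}=\text{j}gg^{\ast}-\text{j}gg^{\ast}=0$, so the quadratic form in ii) vanishes identically and ii) holds for every $U>0$; but the only finite $\theta$ with $\Xi_{\theta}\zeta=0$ is $\theta=\text{j}$, which lies outside $\mathbf{\Theta}$ for every $\nu<1$ (since $\arg(\text{j})=\pi/2>\nu\pi/2$), and $\Xi_{\infty}\zeta=-g\neq0$. So ii) holds while i) fails: the implication ii)$\Rightarrow$i), and hence the lemma as stated, is false for $0<\nu<1$ and true only at $\nu=1$, where the sector coincides with the half-plane; only i)$\Rightarrow$ii) survives in general. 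You should know that the paper's own proof commits exactly the leap you flagged: after obtaining $fg^{\ast}+gf^{\ast}\geq0$ it restates i) as ``$f=\theta g$ $(\theta\in\mathbb{C})$ or $g=0$,'' silently dropping the constraint $\theta\in\mathbf{\Theta}$, and then declares the equivalence via Lemma \ref{lemFGgeq0}. So your proposal has the merit of isolating the true crux, but the plan for resolving it (and with it the lemma's ``if and only if'') cannot be completed.
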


\begin{proof}
Let $\Delta=\begin{bmatrix}
0 & a+\text{j}c\\
a-\text{j}c & 0
\end{bmatrix}$ and $\Sigma=\begin{bmatrix}
0 & b+\text{j}d\\
b-\text{j}d & 0
\end{bmatrix}$, ($\alpha=a+\text{j}c$, $\beta=b+\text{j}d$), and $a,b,c,d\in\mathbb{R}$.
If $\mathbf{\Theta}$ represent the region $\Omega$ and
$\theta=x+\text{j}y\in\mathbf{\Theta},x,y\in\mathbb{R}$, then

\[
\mathbf{\Theta}=\{\theta=x+\text{j}y\mid\sin(\frac{\pi}{2}\nu)x+y\cos(\frac{\pi}{2}\nu)\geq0,\sin(\frac{\pi}{2}\nu)x-\cos(\frac{\pi}{2}\nu)y\geq0
\}
\]
i.e. $a=b=\sin(\frac{\pi}{2}\nu)>0,c=-d=\cos(\frac{\pi}{2}\nu)$.

Therefore, when $\mathbf{\Theta}$ represent the region $\Omega$, there holds $\alpha+\beta=2\sin(\frac{\pi}{2}\nu)>0$.

Because $s$ belongs to principal Riemann surface, the set $\Omega$ implies that
$\text{Re}(s^{\nu})\geq0$. Therefore, $\theta\in\mathbf{\Theta}(\Delta,\Sigma)$ implies that $\text{Re}(\theta)\geq0$.

Let $\zeta=\begin{bmatrix}
f\\g
\end{bmatrix}$. Note that i) satisfies if and only if either $f=\theta g\ (\theta\in\mathbb{C})$ or $g=0\ (\theta=\infty)$.

For statement ii), by some basic algebraic calculation, we get
\begin{align}
& \zeta^*\left[(\Delta+\Sigma)\otimes U\right]\zeta\notag\\
= & (\alpha+\beta)(g^*Uf+f^*Ug) \notag \\
= & (\alpha+\beta)\text{tr}\left[(fg^*+gf^*)U\right]\geq0
\label{prflemZetaReS1}
\end{align}

Note that $\alpha+\beta>0$. Inequality (\ref{prflemZetaReS1}) holds for all $U\in\mathcal{H}_n,U>0$, which implies that
\begin{equation}
fg^*+gf^*\geq0
\end{equation}

According to lemma \ref{lemFGgeq0}, statement i) is equivalent to statement ii) ($g\neq0$).

When $g=0$, it's obvious that statement i) is equivalent to statement ii). This ends the proof.
\end{proof}

Now, we are ready to state FGKYP for $H_{\infty}$ norm.

\begin{theorem}\label{thm01Spro}
Let matrices $\Pi\in\mathcal{H}_{n+m}$, $N\in\mathbb{C}^{2n\times(n+m)}$ be given.
$\Xi_{\theta}$ is defined in (\ref{defXis}) and $S_{\theta}$ is defined as
$S_{\theta}\triangleq(\Xi_{\theta}N)_{\perp}$. If the region $\mathbf{\Theta}(\Delta,\Sigma)$
defined by (\ref{defConvexD}) represents the region $\Omega=\{s^{\nu}\mid s\in\mathbb{C},
\text{Re}(s)\geq0,0<\nu\leq1\}$ on the complex plane and $s$ belongs to principal Riemann surface,
then the following statements are equivalent

i) $S_{\theta}^{\ast}\Pi S_{\theta}<0,$ $\forall \theta\in \mathbf{\Theta}$;

ii) There exists $U\in \mathcal{H}_{n}$ such that $U>0$ and%
\begin{equation}
N^{\ast}\left[(\Delta+\Sigma) \otimes U\right]N+\Pi<0
\end{equation}
\end{theorem}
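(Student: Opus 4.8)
The plan is to recognize Theorem \ref{thm01Spro} as an $S$-procedure statement and reduce it to the lossless $S$-procedure of Lemma \ref{LemSProc}, exactly mirroring the architecture used earlier for the $L_\infty$ result in Theorem \ref{thmKYPforFOS}. First I would observe that statement i) is equivalent to the trace condition $\text{tr}(\Pi\mathcal{G}_1)<0$, where $\mathcal{G}_1$ is the rank-one set defined in (\ref{defG1s}) via $\Xi_\theta N\eta=0$ for $\theta\in\mathbf{\Theta}$. The link is that $S_\theta=(\Xi_\theta N)_\perp$ spans precisely the vectors $\eta$ with $\Xi_\theta N\eta=0$, so $S_\theta^\ast\Pi S_\theta<0$ for all $\theta\in\mathbf{\Theta}$ says exactly that $\eta^\ast\Pi\eta<0$ on every such $\eta$, i.e. $\text{tr}(\Pi\,\eta\eta^\ast)<0$ for all $\eta\eta^\ast\in\mathcal{G}_1$.

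Next I would identify the candidate set $\mathcal{F}$. Guided by (\ref{DefFofGKYP}) but adapted to the half-plane geometry, the natural choice here is
\begin{equation*}
\mathcal{F}\triangleq\left\{N^\ast\big[(\Delta+\Sigma)\otimes U\big]N:U\in\mathcal{H}_n,\ U>0\right\},
\end{equation*}
and the goal is to show that $\mathcal{G}_1$ is characterized by (\ref{DefFG}) with this $\mathcal{F}$. The key reduction is Lemma \ref{lemZetaReS}: for $\zeta=N\eta$, the condition $\Xi_\theta\zeta=0$ for some $\theta\in\mathbf{\Theta}$ with $\text{Re}(\theta)\ge0$ is equivalent to $\zeta^\ast[(\Delta+\Sigma)\otimes U]\zeta\ge0$ for all $U>0$. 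Threading this through $N$ gives the chain $\eta\eta^\ast\in\mathcal{G}_1\Leftrightarrow \Xi_\theta N\eta=0$ for some $\theta\Leftrightarrow (N\eta)^\ast[(\Delta+\Sigma)\otimes U](N\eta)\ge0$ for all $U>0\Leftrightarrow \text{tr}(\mathcal{F}\,\eta\eta^\ast)\ge0$, which is exactly the defining membership in $\mathcal{G}(\mathcal{F})$ from (\ref{DefFG}); the rank-one restriction is automatic. This is the analog of Lemma \ref{lemRankoneF}.

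It then remains to verify that $\mathcal{F}$ is admissible and rank-one separable, the analog of Lemma \ref{lemRkAdm}. Since $\Delta+\Sigma$ has the off-diagonal form $\begin{bmatrix}0 & \alpha+\beta\\ \overline{\alpha+\beta} & 0\end{bmatrix}$ with $\alpha+\beta=2\sin(\tfrac{\pi}{2}\nu)>0$ (established inside the proof of Lemma \ref{lemZetaReS}), I would write $\Delta+\Sigma$ as a congruence image of the standard form $\mathcal{F}_X$ in (\ref{DefFx}): absorbing the scalar $\alpha+\beta>0$ and a phase into a matrix $K$ of the shape used in Lemma \ref{lemRkAdm}, one obtains $\mathcal{F}=K^\ast\mathcal{F}_X K$, which is rank-one separable by Lemma \ref{lemRankone} because $\mathcal{F}_X$ is. Admissibility (nonempty closed convex cone with $int(\mathcal{J})\cap\mathcal{F}=\emptyset$) follows as before since $\mathcal{G}_1$ is nonempty. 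With $\mathcal{F}$ admissible and rank-one separable, Lemma \ref{LemSProc} yields that $\text{tr}(\Pi\mathcal{G}_1)<0$ is equivalent to $(\Pi+\mathcal{F})\cap int(\mathcal{J})\neq\emptyset$, i.e. to the existence of $U>0$ with $N^\ast[(\Delta+\Sigma)\otimes U]N+\Pi<0$, which is statement ii).

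I expect the main obstacle to be the geometric bookkeeping in Lemma \ref{lemZetaReS}'s hypothesis: one must be careful that $\mathbf{\Theta}(\Delta,\Sigma)$ genuinely represents the image region $\Omega=\{s^\nu:\text{Re}(s)\ge0\}$ under the principal-branch constraint, so that the equivalence between $\Xi_\theta\zeta=0$ and the positivity condition is valid only on the correct half-plane sector. The sign $\alpha+\beta>0$ is what makes the trace reformulation in (\ref{prflemZetaReS1}) preserve the inequality direction, and hence what makes the congruence-to-$\mathcal{F}_X$ step and the whole rank-one-separability argument go through; verifying that this positivity persists (and handling the $\theta=\infty$ / $g=0$ boundary case cleanly) is the delicate point rather than the otherwise mechanical invocation of the $S$-procedure.
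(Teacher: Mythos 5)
Your proposal is correct and follows essentially the same route as the paper's own proof: the same choice of $\mathcal{F}=\{N^{\ast}[(\Delta+\Sigma)\otimes U]N : U\in\mathcal{H}_n,\ U>0\}$, the same chain of equivalences through Lemma \ref{lemZetaReS} identifying $\mathcal{G}_1$ with the rank-one elements of $\mathcal{G}(\mathcal{F})$, the same reduction of rank-one separability to $\mathcal{F}_X$ via $\alpha+\beta=2\sin(\tfrac{\pi}{2}\nu)>0$ and Lemma \ref{lemRankone}, and the same final invocation of Lemma \ref{LemSProc}. Your explicit remark on admissibility is a point the paper's proof of this theorem leaves implicit, but it is not a difference in approach.
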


\begin{proof}
Define set $\mathcal{F}$
\begin{equation}
 \mathcal{F}\triangleq \{N^{\ast}\left[(\Delta+\Sigma) \otimes U\right]N:U\in\mathcal{H}_n,U>0\}
 \label{prfThm01equ1}
\end{equation}

Let $\mathcal{G}_1$ defined by (\ref{defG1s}), and $\mathcal{G}_2$ be defined to be $\mathcal{G}_1$ in (\ref{DefFG}) with
(\ref{prfThm01equ1}). Then, for a nonzero vector $\eta$
\begin{align*}
& \eta \eta^{\ast}\in \mathcal{G}_{1}\\
\Leftrightarrow & \Xi_{\theta}N\eta=0\text{ for
some }\theta\in \mathbf{\Theta}(\Delta,\Sigma)\\
\Leftrightarrow & \eta^* N^*\left[(\Delta+\Sigma)\otimes U\right]N\eta\geq0\\
           & \text{for all }U\in\mathcal{H}_n,U>0\\
\Leftrightarrow & \eta \eta^{\ast}\in \mathcal{G}_{2}
\end{align*}
where the first and third equivalences easily follow from the definitions, and the second equivalence
holds due to Lemma \ref{lemZetaReS}.

The last step is to prove the set $\mathcal{F}$ is rank-one separable. Note that, if the region $\mathbf{\Theta}(\Delta,\Sigma)$
represents the region $\Omega$, there holds $\alpha+\beta>0$. We get
\[
(\Delta+\Sigma) \otimes U=\begin{bmatrix}
0 & (\alpha+\beta)U\\
(\alpha+\beta)U & 0
\end{bmatrix}
\]

Thus $(\Delta+\Sigma) \otimes U$ is rank-one separable according to set $\mathcal{F}_X$ in (\ref{DefFx}).
Therefore, according to Lemma \ref{lemRankone}, the set $\mathcal{F}$ is rank-one separable. Finally, i) is
equivalent to ii) due to Lemma \ref{LemSProc}. This ends the proof.
\end{proof}

Now, we can check the $H_{\infty}$ norm by LMI.

\begin{theorem}\label{thm01Hinf}
Consider the FOS with fractional order $0<\nu\leq1$ and its transfer function $G(s)$ in (\ref{equTransFOS}).
Given a prescribed $H_{\infty}$ performance bound $\delta>0$, then
$\left \Vert G(s)\right \Vert_{H_{\infty}}<\delta$ holds if and only if there exists $U\in\mathcal{H}_n,U>0$
such that the following LMI holds
\begin{equation}
\begin{bmatrix}
Sym(A^TU)\sin{(\frac{\pi}{2}\nu)} & UB\sin{(\frac{\pi}{2}\nu)} & C^T\\
B^TU\sin{(\frac{\pi}{2}\nu)} & -\delta I & D^T\\
C & D & -\delta I
\end{bmatrix}<0 \label{inequThm01H}
\end{equation}
\end{theorem}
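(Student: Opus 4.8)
The plan is to apply Theorem \ref{thm01Spro} with a suitably chosen performance matrix $\Pi$ and then reduce the resulting matrix inequality to the stated LMI \eqref{inequThm01H} by standard manipulations. First I would recast the $H_\infty$ bound as a frequency-domain inequality. By the definition of the $H_\infty$ norm together with the fact that $s$ ranges over the principal Riemann surface, the condition $\lVert G(s)\rVert_{H_\infty}<\delta$ is equivalent to $G(s)^\ast G(s)-\delta^2 I<0$ for all $s$ with $\mathrm{Re}(s)\geq0$. Writing $H(\theta)\triangleq(\theta I-A)^{-1}B$ with $\theta=s^\nu$, this becomes
\begin{equation*}
\begin{bmatrix} H(\theta)\\ I\end{bmatrix}^\ast \Pi \begin{bmatrix} H(\theta)\\ I\end{bmatrix}<0,\quad \forall\,\theta\in\mathbf{\Theta}(\Delta,\Sigma),
\end{equation*}
where $\Pi\triangleq\begin{bmatrix} C^TC & C^TD\\ D^TC & D^TD-\delta^2 I\end{bmatrix}$, and $\mathbf{\Theta}(\Delta,\Sigma)$ is the convex sector representing $\Omega=\{s^\nu:\mathrm{Re}(s)\geq0\}$ for $0<\nu\leq1$.

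Next I would invoke Theorem \ref{thm01Spro} directly: statement i) there (namely $S_\theta^\ast\Pi S_\theta<0$ for all $\theta\in\mathbf{\Theta}$) is exactly the frequency-domain condition just written, since $S_\theta=(\Xi_\theta N)_\perp$ parametrizes the relevant subspace and $H(\theta)$ satisfies $\Xi_\theta N$-type relations. Hence i) is equivalent to statement ii): there exists $U\in\mathcal{H}_n$ with $U>0$ such that $N^\ast[(\Delta+\Sigma)\otimes U]N+\Pi<0$. Using the computation from the proof of Theorem \ref{thm01Spro} that $(\Delta+\Sigma)\otimes U=\begin{bmatrix} 0 & (\alpha+\beta)U\\ (\alpha+\beta)U & 0\end{bmatrix}$ with $\alpha+\beta=2\sin(\tfrac{\pi}{2}\nu)$, and substituting $N=\begin{bmatrix} A & B\\ I & 0\end{bmatrix}$, I would expand
\begin{equation*}
N^\ast\begin{bmatrix} 0 & 2\sin(\tfrac{\pi}{2}\nu)U\\ 2\sin(\tfrac{\pi}{2}\nu)U & 0\end{bmatrix}N = 2\sin(\tfrac{\pi}{2}\nu)\begin{bmatrix} \mathrm{Sym}(A^\ast U) & UB\\ B^\ast U & 0\end{bmatrix},
\end{equation*}
so that condition ii) becomes a $2\times2$ block inequality whose $(1,1)$ and $(1,2)$ blocks carry the $\sin(\tfrac{\pi}{2}\nu)$ factor and whose added $\Pi$ contributes the $C,D$ terms.

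Finally I would clear the quadratic terms in $C$ and $D$ using the Schur complement. The combined inequality ii) has the form $\Pi_0+\begin{bmatrix} C & D\end{bmatrix}^T\begin{bmatrix} C & D\end{bmatrix}<0$ where $\Pi_0$ is the $2\times2$ block with the $\delta^2$ appearing in the $(2,2)$ slot; applying Schur complement to pull out the rank factor $\begin{bmatrix} C & D\end{bmatrix}$ introduces the third row/column $\begin{bmatrix} C^T\\ D^T\end{bmatrix}$ and a $-\delta I$ block, and after rescaling $U$ to absorb the factor $2$ and replacing $\delta^2$ by $\delta$ (the same normalization used in Theorem \ref{LMIforLowF}), one arrives at exactly \eqref{inequThm01H}. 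The main obstacle I anticipate is bookkeeping the $\sin(\tfrac{\pi}{2}\nu)$ scaling consistently: one must verify that the factor $\alpha+\beta=2\sin(\tfrac{\pi}{2}\nu)$ lands only on the $(1,1)$ and off-diagonal $(1,2)$ blocks (not on the $C,D$ or $\delta$ blocks), and that the rescaling of $U$ together with the $\delta^2\mapsto\delta$ normalization via Schur complement is carried out in the right order so that the final signs and placements match \eqref{inequThm01H}; the frequency-domain reduction and the appeal to Theorem \ref{thm01Spro} are routine by comparison.
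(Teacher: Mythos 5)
Your proposal is correct and takes essentially the same route as the paper's proof: the same reduction of $\lVert G(s)\rVert_{H_\infty}<\delta$ to the FDI with $\Pi=\begin{bmatrix} C^TC & C^TD\\ D^TC & D^TD-\delta^2 I\end{bmatrix}$ over the sector $\mathbf{\Theta}(\Delta,\Sigma)$, the same appeal to Theorem \ref{thm01Spro}, the same block expansion using $\alpha+\beta=2\sin(\tfrac{\pi}{2}\nu)$, and the same rescaling-of-$U$ plus Schur-complement finish. The bookkeeping you flag as the main risk (the $\sin(\tfrac{\pi}{2}\nu)$ factor landing only on the $(1,1)$ and $(1,2)$ blocks, and the $\delta^2\mapsto\delta$ normalization) works out exactly as you describe and matches the paper's computation.
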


\begin{proof}
Let the region $\mathbf{\Theta}(\Delta,\Sigma)$
defined by (\ref{defConvexD}) represent the region $\Omega=\{s^{\nu}\mid s\in\mathbb{C},
\text{Re}(s)\geq0,0<\nu\leq1\}$ on the complex plane, where $\Delta=\begin{bmatrix}
0 & \alpha\\
\overline{\alpha} & 0
\end{bmatrix}$ and $\Sigma=\begin{bmatrix}
0 & \beta\\
\overline{\beta} & 0
\end{bmatrix}$. Then there holds $\alpha+\beta=2\sin{(\frac{\pi}{2}\nu)}$.

Let $K_{\theta}=(\theta I-A)^{-1}B$. By some basic matrix calculations, we have
\begin{align*}
& \left\Vert G(s)\right\Vert_{H_{\infty}}<\delta\\
\Leftrightarrow & G^*(s)G(s)-\delta^2 I< 0, \text{Re}(s)\geq0\\
\Leftrightarrow & \begin{bmatrix}
K_{\theta}\\I
\end{bmatrix}^*\Pi\begin{bmatrix}
K_{\theta}\\I
\end{bmatrix},\forall\theta\in\mathbf{\Theta}
\end{align*}
where $\Pi=\begin{bmatrix}
C^TC & C^TD\\
D^TC & D^TD-\delta^2 I
\end{bmatrix}$

According to Theorem \ref{thm01Spro}, there exists a matrix $U\in\mathcal{H}_n,U>0$, such that
\begin{align*}
\begin{bmatrix}
A & B \\
I & 0
\end{bmatrix}^T\begin{bmatrix}
0 & (\alpha+\beta)U\\
(\alpha+\beta)U & 0
\end{bmatrix}\begin{bmatrix}
A & B \\
I & 0
\end{bmatrix}+\Pi<0
\end{align*}

The above LMI can be further simplified as
\begin{align*}
\begin{bmatrix}
2Sym(A^TU)\sin{(\frac{\pi}{2}\nu)} & 2UB\sin{(\frac{\pi}{2}\nu)} \\
2B^TU\sin{(\frac{\pi}{2}\nu)} & -\delta^2 I
\end{bmatrix}+\begin{bmatrix}
C & D
\end{bmatrix}^T\begin{bmatrix}
C & D
\end{bmatrix}<0
\end{align*}

Rescaling $U$ and utilizing the Schur complement theorem, we finally get (\ref{inequThm01H}). This ends the proof.
\end{proof}

\begin{remark}
When $\nu=1$, the condition is as same as the condition of $H_\infty$ for integer order system\cite{Boyd1994Linear}.
Meanwhile, Liang\cite{Liang2015Bounded} gives a sufficient condition of $H_\infty$ with fractional order $0<\nu<1$. And if the unknown matrix $U$ in Theorem \ref{thm01Hinf} is an arbitrary matrix, the theorems between Liang's and ours are equivalent.
\end{remark}


\subsection{For fractional order $1<\nu<2$}
The following gives theorems of $H_{\infty}$ norm for FOS with fractional order $1<\nu<2$.

\begin{lemma}\label{lemFG_geq0}
\cite{Rantzer1996Kalman}
Let $F,G$ be complex matrices of the same size. Then
\[
FG^{\ast}+GF^{\ast}\geq0
\]
if and only if there exists a matrix $U$ such that $UU^{\ast}\leq I$ and $F(I+U)=G(I-U)$.
\end{lemma}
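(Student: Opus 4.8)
The plan is to reduce the matrix inequality $FG^{\ast}+GF^{\ast}\ge 0$ to a contraction/range statement about the linear equation $(F+G)U=G-F$, which is just the rearrangement of $F(I+U)=G(I-U)$ (indeed $F+FU=G-GU$ is equivalent to $(F+G)U=G-F$). The algebraic backbone is the single identity
\[
2(FG^{\ast}+GF^{\ast})=(F+G)(F+G)^{\ast}-(F-G)(F-G)^{\ast},
\]
obtained by expanding both products and cancelling the diagonal terms $FF^{\ast}$ and $GG^{\ast}$. The relation $F-G=-(F+G)U$ read off from $F(I+U)=G(I-U)$ is the bridge between the two formulations.

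For the sufficiency direction I would assume such a $U$ exists with $UU^{\ast}\le I$. Substituting $F-G=-(F+G)U$ into the identity gives
\[
2(FG^{\ast}+GF^{\ast})=(F+G)(F+G)^{\ast}-(F+G)UU^{\ast}(F+G)^{\ast}=(F+G)(I-UU^{\ast})(F+G)^{\ast}.
\]
Since $UU^{\ast}\le I$ forces $I-UU^{\ast}\ge 0$, the right-hand side is a congruence of a positive-semidefinite matrix and is therefore itself positive semidefinite, so $FG^{\ast}+GF^{\ast}\ge 0$. This half is purely computational once the identity is available.

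For the necessity direction I would start from $FG^{\ast}+GF^{\ast}\ge 0$, which by the identity is equivalent to $(G-F)(G-F)^{\ast}=(F-G)(F-G)^{\ast}\le(F+G)(F+G)^{\ast}$. The remaining task is to produce a contraction $U$ solving $(F+G)U=G-F$. When $F+G$ is invertible this is immediate: set $U=(F+G)^{-1}(G-F)$, and conjugating the inequality $(G-F)(G-F)^{\ast}\le(F+G)(F+G)^{\ast}$ by $(F+G)^{-1}$ yields exactly $UU^{\ast}\le I$, while $(F+G)U=G-F$ rearranges back to $F(I+U)=G(I-U)$.

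The main obstacle is the singular case, when $F+G$ is not invertible. Here I would invoke the Douglas range-inclusion lemma with $A=F+G$ and $B=G-F$: the operator inequality $BB^{\ast}\le AA^{\ast}$ simultaneously guarantees $\operatorname{range}(B)\subseteq\operatorname{range}(A)$ (this follows from $\ker A^{\ast}\subseteq\ker B^{\ast}$) and the existence of a contraction $U$ with $AU=B$, one explicit choice being $U=A^{+}B$ via the Moore--Penrose pseudoinverse. The range inclusion makes $AU=B$ solvable, and the contraction estimate $UU^{\ast}\le I$ is precisely what the inequality $BB^{\ast}\le AA^{\ast}$ is designed to deliver. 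Translating $AU=B$ back through $(F+G)U=G-F$ recovers $F(I+U)=G(I-U)$, finishing the proof. I expect this reduction to Douglas's lemma -- equivalently, the careful treatment of the non-invertible $F+G$ and the verification that the pseudoinverse solution is a contraction -- to be the only nontrivial step, the rest being bookkeeping around the single algebraic identity.
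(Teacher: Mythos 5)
The paper gives no proof of this lemma at all: it is quoted verbatim from Rantzer's 1996 paper, so there is no in-paper argument to compare against. Your proof is correct and is essentially the classical one from that source: the identity $2(FG^{\ast}+GF^{\ast})=(F+G)(F+G)^{\ast}-(F-G)(F-G)^{\ast}$ converts the hypothesis into $(G-F)(G-F)^{\ast}\leq(F+G)(F+G)^{\ast}$, and the Douglas factorization (or the explicit pseudoinverse contraction $U=(F+G)^{+}(G-F)$, which indeed satisfies $(F+G)U=G-F$ by the range inclusion and $UU^{\ast}\leq(F+G)^{+}(F+G)(F+G)^{\ast}\bigl((F+G)^{+}\bigr)^{\ast}\leq I$) supplies the contraction in the singular case, with the invertible case and the converse direction being the routine computations you give.
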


Thus, we get the following lemma.
\begin{lemma}\label{lemfgIms}
Let $f,g\in\mathbb{C}^n$ and $g\neq0$. Then
\begin{equation}
fg^*+gf^*\geq0\Leftrightarrow f=\text{j}\theta g\ \text{for some }\theta\in\mathbb{C}\text{ with Im}(\theta)\leq0
\end{equation}
\end{lemma}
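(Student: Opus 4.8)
The plan is to obtain this lemma as a scalar rotation of the right-half-plane factorization already available: I would apply Lemma \ref{lemFG_geq0} directly (this is exactly the way Lemma \ref{lemFGgeq0} and Lemma \ref{lem_fjwg} were handled earlier), taking $F=f$ and $G=g$. The governing observation is that if $\theta=a+\text{j}b$ with $a,b\in\mathbb{R}$, then $\text{Re}(\text{j}\theta)=-b=-\text{Im}(\theta)$, so the constraint $\text{Im}(\theta)\leq0$ is precisely $\text{Re}(\text{j}\theta)\geq0$. Hence the target family $\{f=\text{j}\theta g:\ \text{Im}(\theta)\leq0\}$ coincides with $\{f=\theta' g:\ \text{Re}(\theta')\geq0\}$ under $\theta'=\text{j}\theta$, which is exactly the conclusion of the right-half-plane case.

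For the forward implication, I would feed the hypothesis $fg^{\ast}+gf^{\ast}\geq0$ into Lemma \ref{lemFG_geq0} with $F=f$, $G=g$. Since $f,g$ are column vectors, the matrix $U$ it returns is a scalar with $|U|\leq1$ satisfying $f(1+U)=g(1-U)$. The boundary value $U=-1$ is ruled out, because it would force $g=0$ and contradict $g\neq0$; thus $1+U\neq0$ and I may set $j\theta\triangleq(1-U)/(1+U)$, giving $f=\text{j}\theta g$. The Cayley transform $U\mapsto(1-U)/(1+U)$ sends the closed unit disk onto the closed right half plane, so $\text{Re}(\text{j}\theta)\geq0$, i.e. $\text{Im}(\theta)\leq0$, as required.

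The reverse implication is a one-line computation: if $f=\text{j}\theta g$ then
\[
fg^{\ast}+gf^{\ast}=\text{j}\theta\,gg^{\ast}+\overline{\text{j}\theta}\,gg^{\ast}=2\,\text{Re}(\text{j}\theta)\,gg^{\ast}=-2\,\text{Im}(\theta)\,gg^{\ast},
\]
which is positive semidefinite since $gg^{\ast}\geq0$ and $\text{Im}(\theta)\leq0$.

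The only thing that needs care is the orientation bookkeeping: one must confirm that $(1-U)/(1+U)$ maps $|U|\leq1$ to $\text{Re}\geq0$ (not $\text{Re}\leq0$) and that the factor $\text{j}$ rotates this right half plane to the \emph{lower} half plane $\text{Im}\leq0$ rather than the upper one. The single genuine case distinction, the excluded boundary point $U=-1$, is dispatched by the standing assumption $g\neq0$; everything else is the scalar specialization of Lemma \ref{lemFG_geq0} together with this elementary mapping.
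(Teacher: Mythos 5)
Your proof is correct and takes essentially the same approach as the paper: the paper's entire proof is the one-line instruction to apply Lemma \ref{lemFG_geq0} with $(1-U)/(1+U)=\text{j}\theta$, which is exactly your argument. You have simply made explicit the details the paper leaves implicit --- that $U$ is scalar here, that $U=-1$ is excluded by $g\neq0$, that the Cayley map sends the closed unit disk onto the closed right half plane, and that $\text{Re}(\text{j}\theta)\geq0$ is the same as $\text{Im}(\theta)\leq0$.
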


\begin{proof}
Application of Lemma \ref{lemFG_geq0} with $(1-U)/(1+U)=\text{j}\theta$ gives the desired result.
\end{proof}

\begin{lemma}\label{lemZetaImS}
Let $\mathbf{\Theta}(\Delta,\Sigma)$ be defined by (\ref{defConvexD}), $\Xi_{\theta}$ by (\ref{defXis}) and
$\zeta$ is a given vector. If the region $\mathbf{\Theta}(\Delta,\Sigma)$
defined by (\ref{defConvexD}) represents the region $\Omega$ on the complex plane. $\Omega$
and $\overline{\Omega}$ are symmetrical with respect to the real axis on the complex plane and satisfy
$\Omega\cup\overline{\Omega}=\{s^{\nu}\mid s\in\mathbb{C}, \text{Re}(s)\geq0,1<\nu<2\}$ and
$\Omega\cap\overline{\Omega}=\{s\mid s\in\mathbb{C}, \text{Im}(s)=0 \}$ and $s$ belongs to principal Riemann surface,
then the following statements are equivalent.

i) $\Xi_{\theta}\zeta=0$ for some $\theta\in\mathbf{\Theta}$ with $\text{Im}(\theta)\leq0$;

ii) $\zeta^*\{\left[T^*_0(\Delta+\Sigma^T)T_0\right]\otimes U\}\zeta\geq0$ for all $U\in\mathcal{H}_n,U>0$, where $T_0=\begin{bmatrix}
\text{e}^{\text{j}\frac{\pi}{4}} & 0 \\
0 & \text{e}^{-\text{j}\frac{\pi}{4}}
\end{bmatrix}$
\end{lemma}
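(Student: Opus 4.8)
The plan is to imitate, almost line for line, the proof of Lemma~\ref{lemZetaReS}, the only genuinely new device being the congruence $T_0$ of the statement, which repairs the loss of symmetry about the real axis that occurs when $1<\nu<2$. First I would set $\zeta=\left[f^{\ast}\ g^{\ast}\right]^{\ast}$ with $f,g\in\mathbb{C}^{n}$ and expand the quadratic form in ii). Since $\Delta$ and $\Sigma^{T}$ are both off-diagonal, their sum is off-diagonal, and the diagonal congruence $T_0$ keeps it so; denoting by $m$ the $(1,2)$ entry of $T_0^{\ast}(\Delta+\Sigma^{T})T_0$, the Kronecker structure collapses the form to a single trace term
\[
\zeta^{\ast}\{[T_0^{\ast}(\Delta+\Sigma^{T})T_0]\otimes U\}\zeta=\text{tr}\!\left[U\bigl(\overline{m}\,fg^{\ast}+m\,gf^{\ast}\bigr)\right],
\]
so that ii) holds for every $U>0$ if and only if $\overline{m}\,fg^{\ast}+m\,gf^{\ast}\geq0$. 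This is the exact analogue of the step producing $(\alpha+\beta)\text{tr}[(fg^{\ast}+gf^{\ast})U]$ in Lemma~\ref{lemZetaReS}; the difference is that here the scalar $m$ is genuinely complex, which is precisely what the transpose and the rotation are there to control.

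Next I would absorb the phase of $m$. Writing $\hat f=e^{-\text{j}\arg m}f$ gives $\overline{m}\,fg^{\ast}+m\,gf^{\ast}=|m|(\hat f g^{\ast}+g\hat f^{\ast})$, so the inequality becomes $\hat f g^{\ast}+g\hat f^{\ast}\geq0$, exactly the hypothesis of Lemma~\ref{lemfgIms}. That lemma then returns $\hat f=\text{j}\theta g$ for some $\theta$ with $\text{Im}(\theta)\leq0$, or the degenerate branch $g=0$. Unwinding the phase, $f=\theta' g$ with $\theta'=\text{j}\,e^{\text{j}\arg m}\theta$, and through $\Xi_{\theta'}\zeta=0\Leftrightarrow f=\theta'g$ this is meant to be read as condition i). To make the identification I would compute $m$ explicitly from the $\alpha,\beta$ that make $\mathbf{\Theta}(\Delta,\Sigma)$ represent the lower half-sector $\Omega$, tracking the dependence on $\bm{\varphi}=\tfrac{\pi}{2}(\nu-1)$; one finds that $\arg m$ equals the bisector angle of $\Omega$, namely $-\tfrac{\pi\nu}{4}$, so that the half-plane supplied by Lemma~\ref{lemfgIms}, after the rotation by $\text{j}\,e^{\text{j}\arg m}$, is centred on $\Omega$.

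The step I expect to be the real obstacle is this last alignment. The forward implication i)$\Rightarrow$ii) is clean, because $\Omega$ sits inside the centred half-plane produced above and hence any admissible $\theta$ satisfies the rank-structure inequality. The converse is the delicate one: Lemma~\ref{lemfgIms} only certifies membership in a half-plane of opening $\pi$, whereas $\Omega$ is a convex sector of opening $\tfrac{\pi\nu}{2}<\pi$, so to recover i) I must use the extra hypotheses of the lemma — that $s$ lies on the principal Riemann surface and that $\Omega$ is the piece of $\{s^{\nu}:\text{Re}(s)\geq0\}$ with $\text{Im}(\theta)\leq0$ — to discard the spurious half of the half-plane. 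Concretely, I would verify that $T_0^{\ast}(\Delta+\Sigma^{T})T_0$ has the claimed off-diagonal form with the correct phase, that the two bounding rays and the point at infinity ($g=0$, $\theta=\infty$) are each captured, and that the Riemann-surface restriction fixes the sign of $\text{Re}(s^{\nu})$ used to read off $\text{Im}(\theta)\leq0$; once the non-convexity of the full fractional sector has been resolved by the split into $\Omega$ and $\overline{\Omega}$, the remaining equivalences are routine and parallel Lemma~\ref{lemZetaReS}.
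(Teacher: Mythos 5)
Your proposal follows the same skeleton as the paper's proof (collapse the off-diagonal quadratic form to one trace term, invoke Lemma \ref{lemfgIms}, treat $g=0$, i.e. $\theta=\infty$, separately), but it contains one computational slip and, more importantly, it defers exactly the step that cannot be carried out. The slip: under the parametrization the paper actually uses to make $\mathbf{\Theta}(\Delta,\Sigma)$ represent $\Omega$ (namely $a=\sin(\frac{\pi}{2}\nu)$, $b=0$, $c=d=\cos(\frac{\pi}{2}\nu)$), the $(1,2)$ entry of $T_0^{\ast}(\Delta+\Sigma^{T})T_0$ is $m=e^{-\text{j}\pi/2}(\alpha+\overline{\beta})=-\text{j}(\alpha+\overline{\beta})$ with $\alpha+\overline{\beta}$ real and positive, so $\arg m=-\pi/2$ for every $\nu$, not the bisector angle $-\frac{\pi\nu}{4}$ you assert (that value arises only under a different, unit-normal scaling of $\Delta$ and $\Sigma$; the scaling is not determined by the hypothesis that $\mathbf{\Theta}$ represents $\Omega$, and the content of condition ii) actually changes with it). With the paper's scaling, the half-plane handed to you by Lemma \ref{lemfgIms} after unwinding the phase is exactly $\{\theta:\text{Im}(\theta)\leq0\}$, containing $\Omega$ but not centred on it.

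The genuine gap is the converse direction you flag as ``the real obstacle.'' It is not an obstacle that the stated hypotheses can remove: for $g\neq0$ the relation $f=\theta' g$ determines $\theta'$ uniquely, and ii) confines that unique $\theta'$ only to the closed lower half-plane. Concretely, with your $\zeta=[f^{\ast}\ g^{\ast}]^{\ast}$, take $n=1$, $f=-1$, $g=1$: the quadratic form in ii) equals zero for every $U>0$, so ii) holds, yet the only candidate is $\theta'=-1$, which satisfies $\text{Im}(\theta')\leq0$ but lies outside the sector $\mathbf{\Theta}=\{-\frac{\pi}{2}\nu\leq\arg\theta\leq0\}$ for all $\nu<2$; so i), read literally with $\theta\in\mathbf{\Theta}$, fails. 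The Riemann-surface and $\Omega\cup\overline{\Omega}$ hypotheses constrain the region, not the given vector $\zeta$, so they cannot ``discard the spurious half of the half-plane.'' What the paper does at this point is different: it substitutes $\zeta=[f^{\ast}\ (\text{j}g)^{\ast}]^{\ast}$ so that Lemma \ref{lemfgIms}'s conclusion ``$f=\text{j}\theta g$ for some $\theta$ with $\text{Im}(\theta)\leq0$'' coincides verbatim with $\Xi_{\theta}\zeta=0$, and it then treats condition i) as exactly that half-plane statement---its line ``i) satisfies if and only if either $f=\text{j}\theta g\ (\theta\in\mathbb{C})$ or $g=0$'' silently drops the requirement $\theta\in\mathbf{\Theta}$. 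In other words, the paper never performs the sector restriction you promise; the equivalence its proof establishes is with membership in $\{\text{Im}(\theta)\leq0\}$. Your proposal is therefore incomplete precisely where it promises more than the paper's own argument delivers, and no ``routine'' finish exists: to obtain a correct proof along these lines you must weaken i) to the half-plane condition (equivalently, prove the lemma with $\mathbf{\Theta}$ replaced by the closed lower half-plane), which is what the paper's proof, read carefully, actually does.
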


\begin{proof}
Let $\Delta=\begin{bmatrix}
0 & a+\text{j}c\\
a-\text{j}c & 0
\end{bmatrix}$ and $\Sigma=\begin{bmatrix}
0 & b+\text{j}d\\
b-\text{j}d & 0
\end{bmatrix}$, ($\alpha=a+\text{j}c$, $\beta=b+\text{j}d$), and $a,b,c,d\in\mathbb{R}$.
Because $s$ belongs to principal Riemann surface, the set $\Omega$ can be chosen so that
$\text{Im}(s^{\nu})\leq0$. Therefore, $\theta\in\mathbf{\Theta}(\Delta,\Sigma)$ implies that $\text{Im}(\theta)\leq0$.
If $\mathbf{\Theta}$ represent the region $\Omega$ and
$\theta=x+\text{j}y\in\mathbf{\Theta},x,y\in\mathbb{R}$, then
\[
\mathbf{\Theta}=\{\theta=x+\text{j}y\mid \sin(\frac{\pi}{2}\nu)x+\cos(\frac{\pi}{2}\nu)y\geq0,\ \cos(\frac{\pi}{2}\nu)y\geq0\}
\]
i.e. $a=\sin(\frac{\pi}{2}\nu)>0,b=0,c=d=\cos(\frac{\pi}{2}\nu)$.

Therefore, when $\mathbf{\Theta}$ represent the region $\Omega$, there holds $\alpha+\overline{\beta}=2\sin(\frac{\pi}{2}\nu)>0$.

Let $\zeta=\begin{bmatrix}
f\\
\text{j}g
\end{bmatrix}$. Note that i) satisfies if and only if either $f=\text{j}\theta g\ (\theta\in\mathbb{C})$ or $g=0\ (\theta=\infty)$.

For statement ii), by some basic algebraic calculation, we get
\begin{align}
& \zeta^*\{\left[T^*_0(\Delta+\Sigma^T)T_0\right]\otimes U\}\zeta\notag\\
= & (\alpha+\overline{\beta})(g^*Uf+f^*Ug) \notag \\
= & (\alpha+\overline{\beta})\text{tr}\left[(fg^*+gf^*)U\right]\geq0
\label{prflemZetaImS1}
\end{align}

Note that $\alpha+\overline{\beta}>0$. Inequality (\ref{prflemZetaImS1}) holds for all $U>0$, which implies that
\begin{equation}
fg^*+gf^*\geq0
\end{equation}

According to lemma \ref{lemfgIms}, statement i) is equivalent to statement ii) ($g\neq0$).

When $g=0$, it's obvious that statement i) is equivalent to statement ii). This ends the proof.
\end{proof}

\begin{theorem}\label{thm12Spro}
Let matrix $\Pi\in\mathcal{H}_{n+m}$ be given. The region $\mathbf{\Theta}(\Delta,\Sigma)$
defined by (\ref{defConvexD}) represents the region $\Omega$. Region $\Omega$ and $\overline{\Omega}$
are symmetrical with respect to the real axis on the complex plane and satisfy
 $\Omega\cup\overline{\Omega}=\{ s^{\nu}\mid s\in\mathbb{C},\text{Re}(s)\geq0,1<\nu<2 \}$ and
$\Omega\cap\overline{\Omega}=\{s\mid s\in\mathbb{C}, \text{Im}(s)=0 \}$.
Meanwhile, $s$ belongs to the principal Riemann surface.
$\Xi_{\theta}$ is defined by (\ref{defXis}), $N$ by (\ref{defNs}) and $S_{\theta}$ is defined as
$S_{\theta}\triangleq(\Xi_{\theta}N)_{\perp}$. Then, the following statements are equivalence

i) $S_{\theta}^{\ast}\Pi S_{\theta}<0,$ $\forall \theta\in \mathbf{\Theta}(\Delta,\Sigma)$;

ii) There exists $U\in \mathcal{H}_{n}$ such that $U>0$ and
\begin{equation}
N^{\ast}\{\left[T^*_0(\Delta+\Sigma^T)T_0\right]\otimes U\}N+\Pi<0
\end{equation}
where $T_0=\begin{bmatrix}
\text{e}^{\text{j}\frac{\pi}{4}} & 0 \\
0 & \text{e}^{-\text{j}\frac{\pi}{4}}
\end{bmatrix}$.
\end{theorem}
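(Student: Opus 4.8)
The plan is to mirror the proof of Theorem~\ref{thm01Spro} line for line, substituting the $1<\nu<2$ ingredients. I would first define
\[
\mathcal{F}\triangleq\{N^{\ast}\{[T^*_0(\Delta+\Sigma^T)T_0]\otimes U\}N:U\in\mathcal{H}_n,\ U>0\},
\]
let $\mathcal{G}_1$ be the set in (\ref{defG1s}) and let $\mathcal{G}_2$ be the set obtained from (\ref{DefFG}) with this $\mathcal{F}$. For a nonzero vector $\eta$ I would then establish the chain $\eta\eta^{\ast}\in\mathcal{G}_1 \Leftrightarrow \Xi_{\theta}N\eta=0$ for some $\theta\in\mathbf{\Theta}(\Delta,\Sigma) \Leftrightarrow \eta^{\ast}N^{\ast}\{[T^*_0(\Delta+\Sigma^T)T_0]\otimes U\}N\eta\geq0$ for all $U>0 \Leftrightarrow \eta\eta^{\ast}\in\mathcal{G}_2$. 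The first and last equivalences are immediate from the definitions, and the middle one is exactly Lemma~\ref{lemZetaImS} applied to $\zeta=N\eta$; note that the hypotheses on $\Omega$, $\overline{\Omega}$ and the principal Riemann surface already force $\text{Im}(\theta)\leq0$ for every $\theta\in\mathbf{\Theta}$, so the constraint in Lemma~\ref{lemZetaImS} i) is automatic and no separate restriction on $\theta$ is needed. Since $S_{\theta}^{\ast}\Pi S_{\theta}<0$ for all $\theta$ is the same as $\text{tr}(\Pi\mathcal{G}_1)<0$, statement i) is thereby identified with a trace condition on $\mathcal{G}_1$.

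The real content is the rank-one separability of $\mathcal{F}$, and this is the one step genuinely new relative to Theorem~\ref{thm01Spro}. Using the form of $\Delta,\Sigma$ together with the identity $\alpha+\overline{\beta}=2\sin(\frac{\pi}{2}\nu)>0$ from the proof of Lemma~\ref{lemZetaImS}, a direct computation gives $\Delta+\Sigma^T=2\sin(\frac{\pi}{2}\nu)\begin{bmatrix}0&1\\1&0\end{bmatrix}$ and hence $T^*_0(\Delta+\Sigma^T)T_0=2\sin(\frac{\pi}{2}\nu)\begin{bmatrix}0&-\text{j}\\\text{j}&0\end{bmatrix}$, so that $[T^*_0(\Delta+\Sigma^T)T_0]\otimes U=2\sin(\frac{\pi}{2}\nu)\begin{bmatrix}0&-\text{j}U\\\text{j}U&0\end{bmatrix}$. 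Unlike the $0<\nu\leq1$ case, this is not directly of the form $\mathcal{F}_X$ in (\ref{DefFx}) because of the $\pm\text{j}$ phase on the off-diagonal blocks. I would remove it by the unitary congruence $R\triangleq\begin{bmatrix}1&0\\0&\text{j}\end{bmatrix}\otimes I_n$, for which $R^{\ast}\left([T^*_0(\Delta+\Sigma^T)T_0]\otimes U\right)R=2\sin(\frac{\pi}{2}\nu)\begin{bmatrix}0&U\\U&0\end{bmatrix}$, an element of $\mathcal{F}_X$ with $X=2\sin(\frac{\pi}{2}\nu)U\in\mathcal{H}_n$. Consequently, setting $\hat N\triangleq R^{\ast}N$ and letting $U$ range over $\mathcal{H}_n$ to form the closed cone, one has $\mathcal{F}=\hat N^{\ast}\mathcal{F}_X\hat N$; since $\mathcal{F}_X$ is rank-one separable, Lemma~\ref{lemRankone} yields that $\mathcal{F}$ is rank-one separable, and $\mathcal{F}$ is plainly a closed convex cone, hence admissible.

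The main obstacle is precisely this phase bookkeeping: getting the phase of $T^*_0(\Delta+\Sigma^T)T_0$ right and recognizing that the diagonal unitary $R$ rotates the skew $\pm\text{j}$ pattern into the symmetric off-diagonal pattern of $\mathcal{F}_X$, so that Lemma~\ref{lemRankone} applies. Everything else is either definitional or inherited from the cited lemmas. Once admissibility and rank-one separability of $\mathcal{F}$ are in hand, the lossless strict $S$-procedure of Lemma~\ref{LemSProc} shows that $\text{tr}(\Pi\mathcal{G}_1)<0$ holds if and only if $(\Pi+\mathcal{F})\cap int(\mathcal{J})\neq\emptyset$, i.e. if and only if there exists $U>0$ with $N^{\ast}\{[T^*_0(\Delta+\Sigma^T)T_0]\otimes U\}N+\Pi<0$, which is exactly ii). As in Theorem~\ref{thmKYPforFOS}, the strictness of the inequality lets us keep $U>0$ without loss of generality, completing the equivalence.
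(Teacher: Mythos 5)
Your proposal is correct and takes essentially the same route as the paper's own proof: the identical set $\mathcal{F}$, the identical chain of equivalences $\eta\eta^{\ast}\in\mathcal{G}_1\Leftrightarrow\eta\eta^{\ast}\in\mathcal{G}_2$ resting on Lemma \ref{lemZetaImS}, rank-one separability reduced to $\mathcal{F}_X$ via Lemma \ref{lemRankone}, and the conclusion via the lossless $S$-procedure of Lemma \ref{LemSProc}. The only divergence is the bookkeeping of $T_0$: the paper never computes $T_0^{\ast}(\Delta+\Sigma^{T})T_0$ explicitly, but instead uses the identity $(T_0^{\ast}(\Delta+\Sigma^{T})T_0)\otimes U=(T_0\otimes I)^{\ast}\left((\Delta+\Sigma^{T})\otimes U\right)(T_0\otimes I)$ and observes that $(\Delta+\Sigma^{T})\otimes U$ already has the $\mathcal{F}_X$ pattern since $\alpha+\overline{\beta}=\overline{\alpha}+\beta$ is real and positive, so the phase rotation you carry out with the extra unitary $R$ is simply absorbed into the congruence matrix $(T_0\otimes I)N$ of Lemma \ref{lemRankone} and never needs to appear.
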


\begin{proof}
Define set $\mathcal{F}$
\begin{equation}
 \mathcal{F}\triangleq \{N^{\ast}((T^*_0(\Delta+\Sigma^T)T_0)\otimes U)N:U\in\mathcal{H}_n,U>0\}
 \label{prfThm12equ1}
\end{equation}

Let $\mathcal{G}_1$ defined by (\ref{defG1s}), and $\mathcal{G}_2$ be defined to be $\mathcal{G}_1$ in (\ref{DefFG}) with
(\ref{prfThm12equ1}). Then, for a nonzero vector $\eta$
\begin{align*}
& \eta \eta^{\ast}\in \mathcal{G}_{1}\\
\Leftrightarrow & \Xi_{\theta}N\eta=0\text{ for
some }\theta\in \mathbf{\Theta}(\Delta,\Sigma)\\
\Leftrightarrow & \eta^* N^*((T^*_0(\Delta+\Sigma^T)T_0)\otimes U)N\eta\geq0\\
           & \text{for all }U\in\mathcal{H}_n,U>0\\
\Leftrightarrow & \eta \eta^{\ast}\in \mathcal{G}_{2}
\end{align*}
where the first and third equivalences easily follow from the definitions, and the second equivalence
holds due to Lemma \ref{lemZetaImS}.

The last step is to prove the set $\mathcal{F}$ is rank-one separable.  Note that $(T^*_0(\Delta+\Sigma^T)T_0)\otimes U=(T_0\otimes I)^*((\Delta+\Sigma^T)\otimes U)
(T_0\otimes I)$. Because the region $\mathbf{\Theta}(\Delta,\Sigma)$
represents the region $\Omega$, there holds $\overline{\alpha}+\beta=\alpha+\overline{\beta}>0$. We get
\[
(\Delta+\Sigma^T) \otimes U=\begin{bmatrix}
0 & (\alpha+\overline{\beta})U\\
(\overline{\alpha}+\beta)U & 0
\end{bmatrix}
\]

Thus $(\Delta+\Sigma^T) \otimes U$ is rank-one separable according to set $\mathcal{F}_X$ in (\ref{DefFx}).
Therefore, according to Lemma \ref{lemRankone}, the set $\mathcal{F}$ is rank-one separable. Finally, i) is
equivalent to ii) due to Lemma \ref{LemSProc}. This ends the proof.
\end{proof}

The following check the $H_\infty$ of FOS with fractional order $1<\nu<2$.

\begin{theorem}\label{thm12Hinf}
Consider the FOS with fractional order $1<\nu<2$ and its transfer function $G(s)$ in (\ref{equTransFOS}).
Given a prescribed $H_{\infty}$ performance bound $\delta>0$, then
$\left \Vert G(s)\right \Vert_{H_{\infty}}<\delta$ holds if and only if there exists a matrix $U\in\mathcal{H}_n,U>0$
such that the following LMI holds
\begin{equation}
\begin{bmatrix}
Sym(\text{j}UA)\sin(\frac{\pi}{2}\nu) & \text{j}UB\sin(\frac{\pi}{2}\nu) & C^T\\
-\text{j}B^TU\sin(\frac{\pi}{2}\nu) & -\delta I & D^T\\
C & D & -\delta I
\end{bmatrix}<0 \label{inequThm12H}
\end{equation}
\end{theorem}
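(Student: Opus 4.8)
The plan is to mirror the proof of Theorem~\ref{thm01Hinf}, converting the norm bound into a finite-region FDI and then invoking the $S$-procedure result Theorem~\ref{thm12Spro}, which already carries all the conceptual content. First I would fix $\Delta,\Sigma$ exactly as in Lemma~\ref{lemZetaImS}, so that $\mathbf{\Theta}(\Delta,\Sigma)$ represents the lower sector $\Omega$ and $\alpha+\overline{\beta}=2\sin(\tfrac{\pi}{2}\nu)>0$. Setting $\theta=s^{\nu}$ and $K_{\theta}=(\theta I-A)^{-1}B$, the bound $\left\Vert G(s)\right\Vert_{H_\infty}<\delta$ is equivalent to $G^{*}(s)G(s)-\delta^{2}I<0$ for every $s$ with $\text{Re}(s)\geq0$, i.e. to
\[
\begin{bmatrix} K_{\theta}\\ I \end{bmatrix}^{*}\Pi\begin{bmatrix} K_{\theta}\\ I \end{bmatrix}<0,\qquad \Pi\triangleq\begin{bmatrix} C^{T}C & C^{T}D\\ D^{T}C & D^{T}D-\delta^{2}I \end{bmatrix},
\]
required for all $\theta$ in the image of the right half plane under $s\mapsto s^{\nu}$.

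The one step that genuinely differs from the $0<\nu\leq1$ case is that for $1<\nu<2$ this image is the full sector $\Omega\cup\overline{\Omega}$, which exceeds a half plane and is \emph{not} the convex region $\mathbf{\Theta}(\Delta,\Sigma)$ to which Theorem~\ref{thm12Spro} applies. I would remove this gap using the reality of $A,B,C,D$: since $K_{\overline{\theta}}=\overline{K_{\theta}}$ and $\Pi$ is real, the Hermitian form at $\overline{\theta}$ is the complex conjugate of the one at $\theta$, hence negative definite at one iff at the other. Thus enforcing the FDI on $\overline{\Omega}$ is equivalent to enforcing it on $\Omega$, and it suffices to require the inequality for $\theta\in\mathbf{\Theta}(\Delta,\Sigma)$ alone. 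With this reduction Theorem~\ref{thm12Spro} applies directly: the FDI holds for all $\theta\in\mathbf{\Theta}$ iff there exists $U\in\mathcal{H}_{n}$, $U>0$, with $N^{*}\{[T_0^{*}(\Delta+\Sigma^{T})T_0]\otimes U\}N+\Pi<0$, where $N$ is given by (\ref{defNs}).

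What remains is to unpack this matrix inequality into (\ref{inequThm12H}), and the bookkeeping of the complex rotation $T_0$ is the only place real care is needed. Computing the congruence gives $T_0^{*}(\Delta+\Sigma^{T})T_0=2\sin(\tfrac{\pi}{2}\nu)\begin{bmatrix} 0 & -\text{j}\\ \text{j} & 0 \end{bmatrix}$, whence, using $A^{*}=A^{T}$, $B^{*}=B^{T}$, $U^{*}=U$ and $Sym(\text{j}UA)=\text{j}(UA-A^{T}U)$,
\[
N^{*}\{[T_0^{*}(\Delta+\Sigma^{T})T_0]\otimes U\}N=2\sin(\tfrac{\pi}{2}\nu)\begin{bmatrix} Sym(\text{j}UA) & \text{j}UB\\ -\text{j}B^{T}U & 0 \end{bmatrix}.
\]
Adding $\Pi$, splitting off $\begin{bmatrix} C & D \end{bmatrix}^{T}\begin{bmatrix} C & D \end{bmatrix}$ from the lower-right block, rescaling $U$ to absorb the factor $2$, replacing $\delta^{2}$ by $\delta$, and applying the Schur complement exactly as in Theorem~\ref{LMIforLowF} yields (\ref{inequThm12H}). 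I expect the main obstacle to be solely this complex-arithmetic verification of the congruence and of the $\text{j}$-factors in $N^{*}\{\cdots\}N$; everything else is routine or inherited from Theorem~\ref{thm12Spro}.
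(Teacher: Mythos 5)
Your proposal is correct and follows essentially the same route as the paper's own proof: both reduce the FDI from the full (non-convex) sector $\{s^{\nu}:\mathrm{Re}(s)\geq 0\}$ to the convex lower half-sector $\Omega$ via complex-conjugate symmetry of the real realization (the paper states this at the level of $\sigma_{\max}(G(s))$, you state it at the level of the Hermitian form --- the same observation), then invoke Theorem~\ref{thm12Spro} with $\alpha+\overline{\beta}=2\sin(\tfrac{\pi}{2}\nu)$ and unpack the resulting inequality by the Schur complement and a rescaling of $U$. The only difference is that you carry out explicitly the congruence computation $T_0^{*}(\Delta+\Sigma^{T})T_0=2\sin(\tfrac{\pi}{2}\nu)\left[\begin{smallmatrix}0 & -\text{j}\\ \text{j} & 0\end{smallmatrix}\right]$ and the evaluation of $N^{*}\{\cdot\}N$, which the paper compresses into ``similarly to the proof of Theorem~\ref{thm01Hinf}''; your computations check out.
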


\begin{proof}
Let region $\Omega$ and $\overline{\Omega}$
are symmetrical with respect to the real axis on the complex plane and satisfy
 $\Omega\cup\overline{\Omega}=\{ s^{\nu}\mid s\in\mathbb{C},\text{Re}(s)\geq0,1<\nu<2 \}$
 and $\Omega\cap\overline{\Omega}=\{s\mid s\in\mathbb{C},\text{Im}(s)=0 \}$.
 It's a fact that there must hold
\[
 \left\Vert G(s) \right\Vert_{H_{\infty}}=\underset{\text{Re}(s)\geq0}{\sup}\sigma_{\max}(G(s))=
 \underset{s\in\Omega}{\sup}\sigma_{\max}(G(s)).
\]
 This just follows from the maximum modulus principle and the complex conjugate symmetry of $G(s)$.

 The region $\mathbf{\Theta}(\Delta,\Sigma)$
defined by (\ref{defConvexD}) represents the region $\Omega$, where $\Delta$ and $\Sigma$
\[
\Delta=\begin{bmatrix}
0 & \alpha\\
\overline{\alpha} & 0
\end{bmatrix},\ \Sigma=\begin{bmatrix}
0 & \beta\\
\overline{\beta} & 0
\end{bmatrix}
\]
and $\alpha+\overline{\beta}=2\sin(\frac{\pi}{2}\nu)$.

Similarly to the proof of Theorem \ref{thm01Hinf} with $\alpha+\overline{\beta}=2\sin(\frac{\pi}{2}\nu)$ and due to the
Theorem \ref{thm12Spro}, we can get the result.
\end{proof}

\begin{remark}
Liang\cite{Liang2015Bounded} also proves a sufficient and necessary condition of
$H_\infty$ with fractional order $1<\nu<2$. The two conditions between Liang's and
ours are equivalent because $jU\sin(\frac{\pi}{2}\nu)$ can be regarded as an arbitrary complex matrix.
\end{remark}

\section{Numerical examples}

In order to use the LMI tools of Matlab, the following fact should be introduced.

\begin{fact}
A Hermitian matrix $H<0$ holds if and only if the following real LMI holds
\begin{equation*}
\begin{bmatrix}
\mathrm{Re}(H) & \mathrm{Im}(H)\\
\mathrm{Im}(H)^T & \mathrm{Re}(H)
\end{bmatrix}<0
\end{equation*}
\end{fact}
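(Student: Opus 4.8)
The plan is to reduce the whole claim to a single quadratic-form identity relating the complex form $z^{\ast}Hz$ to a real form of twice the dimension. First I would decompose $H=R+\mathrm{i}S$ with $R\triangleq\mathrm{Re}(H)$ and $S\triangleq\mathrm{Im}(H)$; since $H=H^{\ast}$, these real matrices satisfy $R^{T}=R$ and $S^{T}=-S$. A one-line check using these symmetries shows that the block matrix $M\triangleq\begin{bmatrix}R & S\\ S^{T} & R\end{bmatrix}$ is genuinely real symmetric, so that $M<0$ is a bona fide real LMI; note also that $S^{T}=-S$ means $M=\begin{bmatrix}R & S\\ -S & R\end{bmatrix}$.

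The heart of the proof is the identity
\begin{equation*}
z^{\ast}Hz=\begin{bmatrix}x\\ -y\end{bmatrix}^{T}M\begin{bmatrix}x\\ -y\end{bmatrix},\qquad z=x+\mathrm{i}y,\ x,y\in\mathbb{R}^{n}.
\end{equation*}
To establish it I would expand $z^{\ast}Hz=(x-\mathrm{i}y)^{T}(R+\mathrm{i}S)(x+\mathrm{i}y)$, observe that $z^{\ast}Hz$ is real because $H$ is Hermitian (the imaginary part cancels using $R^{T}=R$ and $x^{T}Sx=y^{T}Sy=0$), and collect the real part as $x^{T}Rx+y^{T}Ry-2x^{T}Sy$. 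Expanding the right-hand side of the identity and combining the cross terms via $y^{T}Sx=-x^{T}Sy$ yields exactly the same expression, confirming the identity.

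With the identity in hand, the equivalence follows because $z=x+\mathrm{i}y\mapsto\begin{bmatrix}x\\ -y\end{bmatrix}$ is a bijection from $\mathbb{C}^{n}$ onto $\mathbb{R}^{2n}$ carrying nonzero vectors to nonzero vectors. For the direction $M<0\Rightarrow H<0$, every nonzero $z$ gives a nonzero real vector on which $M$ is negative, hence $z^{\ast}Hz<0$. Conversely, given any nonzero real $\begin{bmatrix}u\\ v\end{bmatrix}$ I would set $z=u-\mathrm{i}v\neq0$ and read the identity backwards to get $\begin{bmatrix}u\\ v\end{bmatrix}^{T}M\begin{bmatrix}u\\ v\end{bmatrix}=z^{\ast}Hz<0$, so $M<0$.

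I expect the only delicate point to be the sign bookkeeping: the skew-symmetry $S^{T}=-S$ is precisely what forces the placement $\begin{bmatrix}x\\ -y\end{bmatrix}$ (rather than $\begin{bmatrix}x\\ y\end{bmatrix}$), and the cross term $-2x^{T}Sy$ matches only after using $y^{T}Sx=-x^{T}Sy$. An alternative route that avoids the explicit quadratic form is an eigenvalue-doubling argument: the real matrix $M$ is, up to the congruence $\mathrm{diag}(I,-I)$, the standard real representation $\begin{bmatrix}R & -S\\ S & R\end{bmatrix}$ of $H$, whose spectrum consists of each (real) eigenvalue of $H$ repeated twice; hence $M$ and $H$ have the same sign pattern of eigenvalues and negative definiteness of one is equivalent to that of the other. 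Either way the computation is short once the representation is fixed.
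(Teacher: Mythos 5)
Your proof is correct: with $H=R+\mathrm{i}S$, $R^{T}=R$, $S^{T}=-S$, the identity $z^{\ast}Hz=w^{T}Mw$ for $z=x+\mathrm{i}y$ and $w=(x^{T},\,-y^{T})^{T}$ holds exactly as you compute (the cross terms combine to $-2x^{T}Sy$ on both sides), and since $z\mapsto w$ is a nonzero-preserving bijection between $\mathbb{C}^{n}$ and $\mathbb{R}^{2n}$, negative definiteness transfers in both directions; the eigenvalue-doubling alternative via the congruence $\mathrm{diag}(I,-I)$ is equally sound. Note that the paper states this Fact without any proof (it is the standard device for feeding complex LMIs to real solvers such as the Matlab LMI toolbox), so there is no authorial argument to compare against --- your proposal supplies the missing justification, and either of your two routes would serve.
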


\begin{example}
The following shows the $L_\infty$ of FOS with low frequency range.

Consider the transfer function $G(s)$ in (\ref{equTransFOS}) with the parameters
described as following.

\begin{align*}
A&=
\begin{bmatrix}
-12.1 & 2.3\\
2.37 & -16.2
\end{bmatrix},
B=\begin{bmatrix}
-2\\
1.2
\end{bmatrix},\\
C&=\begin{bmatrix}
1.5 & 1.9
\end{bmatrix},
D=0.8,\\
\nu & =0.6,\ \delta=0.9,\ 0\leq\omega\leq 100
\end{align*}

The maximum singular values are shown in figure \ref{FigSin}. It shows that the $L_\infty$ norm is less than 0.77 in the frequency range. Due to Theorem \ref{LMIforLowF}, solving the LMI (\ref{EqLFFOS}) via Matlab, we get
\begin{align*}
U=\begin{bmatrix}
4.4908 & 7.3472\\
7.3472 & 13.5229
\end{bmatrix},
V=\begin{bmatrix}
0.0772 & 0.1525\\
0.1525 & 0.4045
\end{bmatrix}
\end{align*}

This implies that $L_\infty<0.9$ is convinced. According to figure \ref{FigSin}, $L_\infty<0.77<0.9$, which means Theorem \ref{LMIforLowF} is correct.
However, when we set $\delta=0.6$, LMI (\ref{EqLFFOS}) cannot be solved because
0.6 is less than the max value shown in figure \ref{FigSin}. It verifies that
Theorem \ref{LMIforLowF} is correct.

\end{example}

\begin{example}
The following gives an example of the Theorem \ref{thm01Hinf}.

Consider the transfer function $G(s)$ in (\ref{equTransFOS}) with the parameters
described as following.

\begin{align*}
A &=\begin{bmatrix}
-1.9 & 1.3 \\
0.6 & -1.5
\end{bmatrix},
B=\begin{bmatrix}
-1.8\\
2.7
\end{bmatrix},\\
C &=\begin{bmatrix}
2.2 & 3.1
\end{bmatrix},
D=0.2,\\
\nu &=0.7, \delta=9.2
\end{align*}

The eigenvalues of this system are shown in figure \ref{FigEig}. It shows
that the  system is stable.

Solving the LMI (\ref{inequThm01H}) via Matlab, we get

\begin{align*}
U=\begin{bmatrix}
1.6211 & 0.8928\\
0.8928 & 2.2315
\end{bmatrix}
\end{align*}

This implies that $H_\infty<9.2$ is convinced. However, when we set
$\delta=1.6$, the LMI (\ref{inequThm01H}) cannot be solved, which means
$H_\infty<1.6$ is not verified.

\end{example}

\begin{figure}
\centering
  \includegraphics[width=0.50\textwidth]{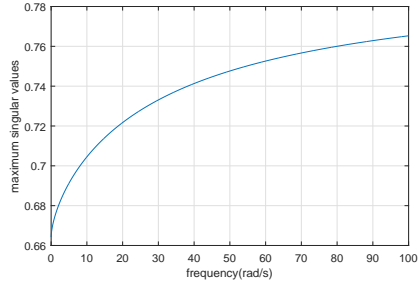}\\
  \caption{Max singular values corresponding to frequency}\label{FigSin}
\end{figure}

\begin{figure}
\centering
  \includegraphics[width=0.50\textwidth]{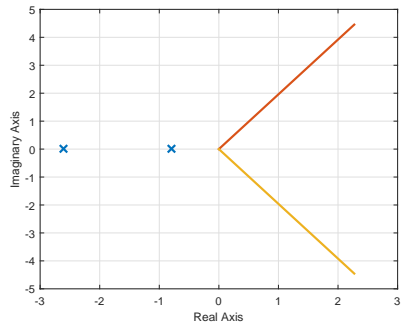}\\
  \caption{Eigenvalues of the FOS}\label{FigEig}
\end{figure}

\section{Conclusion}
In this paper, FGKYP is proved, which develops the GKYP into the fractional order system. $S$-procedure is used to bridge between the matrix inequality and frequency range. We prove the FGKYP for $L_\infty$ and $H_\infty$ of FOS, respectively. Based on the FGKYP, $L_\infty$ of FOS with different frequency range is proved. $H_\infty$ of FOS is proved and the FGKYP is different between fractional order $0<\nu\leq 1$ and
$1<\nu<2$. Examples are given to verify the theorems.

\bibliographystyle{unsrt}
\bibliography{H-infiniteControl}

\end{document}